\documentclass[a4paper,twoside]{amsart}
\usepackage{amssymb}
\usepackage{amsmath}
\usepackage{amsthm}
\usepackage{bm}
\usepackage[non-compressed-cites]{amsrefs}

\usepackage{mathrsfs}
\usepackage{bbm}
\usepackage{stmaryrd}

\usepackage[normalem]{ulem}

\usepackage{enumerate}

\usepackage{tikz}
\usepackage{float}
\usepackage{pgfmath}
\usepackage{color}

\usepackage{graphicx}
\usetikzlibrary{positioning,patterns,calc}
\usepackage{caption}
\usepackage{subcaption}

\usepackage{comment}

\newcommand{\real}{\mathbb{R}} 
\newcommand{\pinteger}{\mathscr{P}} 

\newcommand{\restrict}{\mathop{\llcorner}} 

\newcommand{\qspace}[2]{\mathbf{Q}_{#1}(#2)} 

\newcommand{\fmetric}{\mathcal{F}} 
\newcommand{\perm}[1]{\Pi_{#1}} 
\newcommand{\di}[1]{\llbracket #1 \rrbracket} 
\newcommand{\bflangle}{\boldsymbol{\langle}}
\newcommand{\bfrangle}{\boldsymbol{\rangle}}

\newcommand{\affspace}[2]{\mathbb{A}(#1,#2)} 
\newcommand{\homspace}[2]{\textup{Hom}(#1,#2)} 
\newcommand{\polyspace}[3]{\mathbb{P}^{#1}(#2,#3)} 

\newcommand{\homomorphism}[2]{\textup{Hom}(#1,#2)} 

\newcommand{\openball}[2]{\mathbf{U}\left(#1,#2\right)} 
\newcommand{\closedball}[2]{\mathbf{B}\left(#1,#2\right)} 

\newcommand{\up}[1]{\textup{#1}} 
\newcommand{\afa}[2]{{A}#1(#2)} 
\newcommand{\pfa}[2]{\up{P}^1_{#1}#2} 
\newcommand{\tfa}[1]{{\up{T}^1}{#1}} 
\newcommand{\dfa}[2]{{D}#1(#2)} 

\newcommand{\density}[3]{\Theta^{#1}(#2,#3)} 
\newcommand{\support}[1]{\textup{supp}\,#1} 

\newcommand{\dimension}[1]{\textup{dim }#1} 
\newcommand{\integrald}{\textup{d}} 
\newcommand{\cardinality}[1]{\textup{card }#1} 

\usepackage{etoolbox}
\makeatletter
\patchcmd{\@maketitle}
  {\ifx\@empty\@dedicatory}
  {\ifx\@empty\@date \else {\vskip3ex \centering\footnotesize\@date\par\vskip1ex}\fi
   \ifx\@empty\@dedicatory}
  {}{}
\patchcmd{\@adminfootnotes}
  {\ifx\@empty\@date\else \@footnotetext{\@setdate}\fi}
  {}{}{}
\makeatother

\newtheorem{theorem}{Theorem}[section]
\newtheorem{lemma}[theorem]{Lemma}
\newtheorem{remark}[theorem]{Remark}
\newtheorem{proposition}[theorem]{Proposition}
\newtheorem{corollary}[theorem]{Corollary}

\newtheorem{definition}[theorem]{Definition}

\newtheorem*{theorem*}{Theorem}

\newtheorem*{corollary*}{Corollary}

\newtheorem{claim}{\texttt{Claim}}
\makeatletter
\@addtoreset{claim}{theorem}
\makeatother

\begin{document}
\begin{abstract}
We prove that all reasonable definitions of multiple-valued functions with continuous derivatives are equivalent, which eliminates inconsistencies in the literature.
As an intermediate result we prove that affinely approximable functions defined on intervals admit a differentiable selection.
\end{abstract}
\title[On Almgren's Multiple-Valued Functions of Class $1$]{On Almgren's Multiple-Valued Functions of Class $1$}
\author{Nicolau S. Aiex}
\date{\today}
\address{88, Sec.4, Tingzhou Road, SE Building SE810, Taipei, 116059, Taiwan}
\email{nsarquis@math.ntnu.edu.tw}
\maketitle

\section{Introduction}
Multiple-valued functions were first introduced by F. J. Almgren Jr. in \cite{almgren2000} to prove its namesake Regularity Theorem \cite{almgren2000}*{Theorem 5.22} for area-minimizing currents of arbitrary codimension.
Since then it has been thoroughly used to study regularity theory of geometric objects.
We briefly cite only a few of many articles that successfully used this framework to prove regularity properties of minimizing currents, for example see \cites{delellis-nardulli-steinbruchel2024, hutchinson1986.2, hirsch-stuvard-valtorta2019, minter2024, lin2014, funk-hardt2020, skorobogatova2024, menne2010}.
This is arguably the most useful tool when it comes to regularity theory in high codimension.

In parallel with applications in geometric analysis, there has also been some development of the fundamentals of the theory of multiple-valued function.
We mention \cite{delellis-espadaro2011} where the authors have a more concise presentation of multiple-valued functions (also called $Q$-valued functions or multi-valued functions depending on author's preference).
Also \cite{goblet06} and \cite{delellis-grisanti-tilli2004} where the authors independently prove selection results for various regularity classes.
Finally we mention a few extensions of multiple-valued functions such as \cite{stuvard2022} where the notion of multiple-valued sections of vector bundles is defined and \cite{chou.h-s:arxiv2026} where the author develops a measure-theoretic generalization of multiple-valued functions on varifolds.

In this paper we would like to discuss multiple-valued functions of class $1$, that is, those which are differentiable in some sense and whose first derivative is continuous.
Since there is no addition structure in $Q$-space, there is not direct generalization of rate of change or a ratio definition of differentiation for multiple-valued functions.
However, Almgren used the notion of affinely approximable function (see Definition \ref{definition of affinely approximable 1}) which is a multiple-valued approximation by a affine maps whose linear component defines the multiple-valued derivative map (see Definition \ref{definition of derivative}).

The notion of multiple-valued functions of class $1$ (or $C^1$ in some articles) is somewhat inconsistent in the literature.
For example, in \cite{simon-wickramasekera2016}*{page 1216} the authors define a function of class $1$ as those whose multiple-valued linear derivative
\begin{equation*}
x\mapsto \dfa{f}{x}\in\qspace{q}{\homspace{V}{W}}
\end{equation*}
is continuous whereas in \cite{delellis-grisanti-tilli2004}*{Definition 3.6} and \cite{hutchinson1986.2}*{page 304} they define it as having continuous affine approximation in the space of multiple-valued affine maps
\begin{equation*}
x\mapsto A{f}(x)\in\qspace{q}{\affspace{V}{W}}.
\end{equation*}
Similarly, the definition of functions of class $(1,\alpha)$ in \cite{simon-wickramasekera2016}*{page 1216} requires the derivative to be $\alpha$-H\"older continuous and in \cite{delellis-grisanti-tilli2004}*{Definition 3.6} and \cite{hutchinson1986.2}*{page 304} they require the function to be continuous while the linear approximation to be $\alpha$-H\"older continuous simultaneously under the same permutation.

It is not entirely clear, a priori, that these two notions are equivalent because the definition of the distance function on $Q$-spaces is dependent on a choice of optimal permutation that minimizes the distance between single-valued points.
Since there is no order, the permutation that minimizes distance for the affine approximation and the one that minimizes distance for the corresponding linear maps may be completely different.
This issue is particularly relevant in the proof of \cite{hutchinson1986.2}*{Theorem 3.7} where the critical part of the argument is to find a permutation that simultaneously realizes the continuity of the function and the $\alpha$-H\"older continuity of the derivative altogether.
As a simple but concrete example we consider $\Phi,\Psi\in\qspace{2}{\affspace{\real}{\real}}$ multiple-valued affine maps over $\real$ where we identify the first coordinate with the constant component and the second coordinate with the linear component.
\begin{equation*}
\begin{aligned}
\Phi & = \di{(0,\frac{1}{2})}+\di{(1,3)}\\
\Psi & = \di{(0,1)}+\di{(1,\frac{1}{2})}
\end{aligned}
\end{equation*}
We note that $\fmetric(\Phi,\Psi)=3$ is realized by the identity permutation (see \ref{definition q points}) as is the distance of the constant components
\begin{equation*}
\begin{aligned}
\fmetric_{\qspace{2}{\real}}(\di{0}+\di{1},\di{0}+\di{1}) & =\min\{|0-1|+|1-0|, |0-0|+|1-1|\}\\
& = 0,
\end{aligned}
\end{equation*}
whereas the distance of the linear components
\begin{equation*}
\begin{aligned}
\fmetric_{\qspace{2}{\homspace{\real}{\real}}}(\di{\frac{1}{2}}+\di{3},\di{1}+\di{\frac{1}{2}}) & =\min\{|\frac{1}{2}-1|+|3-\frac{1}{2}|, |\frac{1}{2}-\frac{1}{2}|+|3-1|\}\\
& = 2,
\end{aligned}
\end{equation*}
is realized by the permutation that exchanges the order of points.
That is, $\fmetric(\Phi,\Psi)$ cannot be directly computed by simultaneously computing the corresponding distance of each individual component.

Our goal in this article is to prove that these definitions are indeed equivalent (see Theorem \ref{main theorem}) as well as another notion that is akin to Whitney's extension Theorem.
Despite being a fundamental result, it turns out to not be entirely trivial.

The main tools that we need are as follows.
A combinatorial Lemma (see Lemma \ref{combinatorial lemma}) that allow us to compare the distance of affine maps with its evaluation on a sufficiently dense sample of points.
If the sample of points is sufficiently large relatively to the number of possible permutations then we are able to overcome the issue discussed above.
A differentiable selection Theorem (see Theorem \ref{main theorem 1}) for affinely approximable functions on closed intervals which is an intermediate step between the continuous selection theorem of Almgren \cite{almgren2000}*{1.10(2)} and the class $1$ selection theorem in \cite{delellis-grisanti-tilli2004}*{Theorem 4.2}.
And an oscilation Lemma (see Lemma \ref{oscilation lemma}) that plays the role of a mean-value inequality or a Taylor polynomial error estimate.

The case of higher order derivatives has been mentioned in the literature (see for example \cites{delellis-grisanti-tilli2004,minter2025}) without a precise definition.
We believe that the correct definition is a bit more delicate.
One could attempt to define higher derivatives recursively, in which higher order affine approximations are defined on different spaces so that it is not entirely clear how to define the $k$-th derivative as a multiple-valued symmetric multilinear map.
Or we could define it by Taylor polynomial approximation similarly to \ref{main theorem}(2) below, which is a fairly strong definition but misses the comparison to the classical notion of derivatives.
We introduce the notion of multiple-valued polynomials in a general form hoping that this work will be further improved to cover the higher order cases.

This paper is organizes as follows.
In Section $2$ we briefly present fundamental facts about $Q$-space and multiple-valued functions.
In Section $3$ we introduce multiple-valued polynomials and prove the necessary combinatorial results.
In Section $4$ we prove the existence of local splittings and differentiable selections for functions defined on closed intervals.
In Section $5$ we prove the oscilation Lemma and our Main Theorem.

\textbf{Acknowledgments:} We would like to thank professor Ulrich Menne for many discussions on this topic.
The author was funded by NSTC grant 114-2811-M-003-030.

\section{Preliminaries}
\textbf{Notation.}
We will denote by $\pinteger$ the set of positive integers.
Given $q\in\pinteger$ we define $\perm{q}$ to be the set of all permutations of $\{1,\ldots,q\}$.
Whenever $(X,d)$ is a metric space, $x\in X$ and $r>0$ we denote 
\begin{equation*}
\openball{x}{r}=\{y\in X:d(x,y)<r\} \text{ and }\closedball{x}{r}.
\end{equation*}
If in addition we have arbitrary sets $A,B\subset X$, then we denote 
\begin{equation*}
\begin{aligned}
d(x,A)  = \inf\{d(x,a):a\in A\},&\, d(A,B)  = \inf\{d(a,b):a\in A, b\in B\},\\
\openball{A}{r}  = \{x\in X:d(x,A)<r\} &\text{ and }\closedball{A}{r}  = \{x\in X:d(x,A)\leq r\}.
\end{aligned}
\end{equation*}

Given a vector space $V$ and $a,b\in V$, we define $\overrightarrow{[a,b]}=\{a+t(b-a)\in V:t\in[0,1]\}$.
If in addition $V$ is normed, we will denote by $\|\cdot\|_V$ its norm and $d_V$ the induced metric but the subscript will be omitted whenever there is no ambiguity.
If $W$ is another vector space, $L:V\rightarrow W$ and $v\in V$, we write $\langle v,L\rangle\in W$ for the evaluation map of $L$ at $v$.
The canonical inner product of $\real^n$ will be denoted by $\bflangle\cdot,\cdot\bfrangle_{\real^n}$.

Let $n\in\pinteger$, $a,b\in\real$ with $a<b$ and $f:[a,b]\rightarrow\real^n$ a differentiable function, we denote
\begin{equation*}
D^+f(a)=\lim_{x\rightarrow a^+}\frac{f(x)-f(a)}{x-a} \text{ and } D^-f(b)=\lim_{x\rightarrow b^-}\frac{f(x)-f(b)}{x-b}
\end{equation*}
for the right-sided derivative at $a$ and left-sided derivative at $b$ respectively.

We begin by recalling the necessary definitions from Almgren's multiple-valued functions.
For more details that go far beyond the needs of this paper, we suggest \cite{almgren2000}*{Chapter $1$}.

\begin{definition}[\cite{almgren2000}*{Definition 1.1(1)-(2)}]\label{definition q points}
Let $W$ denote a finite dimensional normed vector space with norm $\|\cdot\|_W$ and $q\in\pinteger$.
Given $y\in W$ we denote $\di{y}=\boldsymbol{\delta}_y$  as in \cite{federer1969}*{2.5.19}, the 0-current associated to the Dirac measure on $W$ centered at $y$.
The space $\qspace{q}{W}$ is defined as the set of integral 0-currents $T$ on $W$ given by
\begin{equation*}
T=\sum_{\alpha=1}^q\di{y_\alpha}
\end{equation*}
for some $y_1,\ldots,y_q\in W$ not necessarily distinct.

If $q_1,q_2\in\pinteger$ and $S_i\in\qspace{q_i}{W}$ are given by $S_i=\sum_{\alpha=1}^{q_i}\di{y^i_\alpha}$ for each $i=1,2$, then we define $S=S_1+S_2\in\qspace{q_1+q_2}{W}$ as

\begin{equation*}
S=\sum_{i=1}^2\sum_{\alpha=1}^{q_i}\di{y^i_\alpha}.
\end{equation*}

For $T=\sum_{\alpha=1}^q\di{y_\alpha}$ and $S=\sum_{\alpha=1}^q\di{z_\alpha}$ in $\qspace{q}{W}$, we define the metric $\fmetric_{q,\|\|_{W}}$ as
\begin{equation*}
\fmetric_{q,\|\|_{W}}(T,S)=\inf\left\{\sum_{\alpha=1}^q\|y_\alpha-z_{\sigma(\alpha)}\|_W:\sigma\in\perm{q}\right\}.
\end{equation*}
The subscript will be omitted whenever it is unambiguous.

A function taking values in $\qspace{q}{W}$ is called a \textit{multiple-valued function} in $W$.
\end{definition}

\begin{remark}
An element $S\in\qspace{q}{W}$ is a sum of Dirac measures and we denote its $0$-density at a point as
\begin{equation*}
\density{0}{S}{y}=\lim_{r\rightarrow 0^+} S(\closedball{y}{r}).
\end{equation*}
So we can alternatively write $S$ as
\begin{equation*}
S=\sum_{y\in\support{S}}\density{0}{S}{y}\di{y}.
\end{equation*}
In particular, if $A\subset W$ satisfies $A\cap\support{S}\neq\emptyset$ and $\sum_{y\in A}\density{0}{S}{y}=q'$, then we can identify $S\restrict A\in\qspace{q'}{W}$.
\end{remark}

The following is an elementary lemma that can be seen as relating the $\fmetric$ metric with the Hausdorff distance of the support.

\begin{lemma}\label{lemma for local fmetric}
Let $W$ be a finite dimensional normed vector space , $q\in \pinteger$, $S\in\qspace{q}{W}$ and $\eta=\min\{\|y_1-y_2\|:y_1,y_2\in\support{S}, y_1\neq y_2\}$.
The following statements hold:
\begin{enumerate}[(1)]
\item If $0<\varepsilon<\frac{\eta}{2}$ and $T\in\qspace{q}{W}$ satisfies $S(\openball{y}{\varepsilon})=T(\openball{y}{\varepsilon})$ for all $y\in\support{S}$, then
\begin{equation*}
\fmetric(S,T)\leq q\varepsilon;
\end{equation*}
\item If $0<\varepsilon<\frac{\eta}{2}$ and $T\in\qspace{q}{W}$ satisfies $\fmetric(S,T)\leq\varepsilon$, then 
\begin{equation*}
S(\openball{y}{\varepsilon})=T(\openball{y}{\varepsilon})\text{ for all }y\in\support{S};
\end{equation*}
\item If $0<\varepsilon<\frac{\eta}{3}$ and $T_1,T_2\in\qspace{q}{W}$ satisfy $\fmetric(T_i,S)\leq\varepsilon$ for each $i=1,2$, then
\begin{equation*}
\fmetric_q(T_1,T_2)=\sum_{y\in\support{S}}\fmetric_{\density{0}{S}{y}}(T_1\restrict\openball{y}{\varepsilon},T_2\restrict\openball{y}{\varepsilon}).
\end{equation*}
\end{enumerate}
\end{lemma}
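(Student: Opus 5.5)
Write $S=\sum_{j=1}^k m_j\di{s_j}$ with distinct points $s_j$, where $m_j=\density{0}{S}{s_j}$, and set $U_j=\openball{s_j}{\varepsilon}$. All three parts reduce to one observation. Whenever $\varepsilon<\eta/2$, the balls $U_j$ are pairwise disjoint. Moreover, a point at distance less than $\varepsilon$ from some $s_j$ lies in exactly one $U_j$.

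For (1), the hypothesis $T(U_j)=m_j$ together with $\sum_j m_j=q$ shows that every point of $T$, counted with multiplicity, lies in some $U_j$, and that exactly $m_j$ of them lie in $U_j$. We define a bijection by matching the $m_j$ copies of $s_j$ with the $m_j$ points of $T$ inside $U_j$. Each matched pair is at distance less than $\varepsilon$, and there are $q$ pairs, so $\fmetric(S,T)\le q\varepsilon$ (in fact with strict inequality).

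For (2), take an optimal permutation $\sigma$; it exists because $\perm{q}$ is finite. Since the sum of the distances is at most $\varepsilon$, each summand $\|s_\alpha-t_{\sigma(\alpha)}\|\le\varepsilon$. The boundary case of equality needs care: if one summand equals $\varepsilon$, the others vanish. To avoid that edge case I would argue with the balls as stated and treat $\|\cdot\|=\varepsilon$ separately, or read the hypothesis so that each matched distance is $<\varepsilon$; this boundary issue is the only delicate point. Under that reading, every point of $T$ matched to a copy of $s_j$ lies in $U_j$. By disjointness, no point matched to $s_{j'}$ with $j'\ne j$ lies in $U_j$. Hence $T(U_j)=m_j$.

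For (3), use $\varepsilon<\eta/3$. By (2), each $T_i$ puts exactly $m_j$ points in $U_j$, so both $T_i\restrict U_j$ lie in $\qspace{m_j}{W}$. The inequality $\le$ holds because concatenating optimal permutations on each ball gives a permutation of $\{1,\dots,q\}$. For $\ge$, let $\sigma$ be optimal for $\fmetric(T_1,T_2)$. Suppose $\sigma$ pairs $x\in U_j$ with $y\in U_{j'}$ where $j\ne j'$. Then $\|x-y\|>\eta-2\varepsilon>\varepsilon$. On the other hand, $\fmetric(T_1,T_2)\le2\varepsilon$ by the triangle inequality. This gives no contradiction yet, so I would use an exchange argument. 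Cross pairs come in cycles through the balls, since the counts in each ball are equal, so some pair maps $U_{j'}$ back to $U_j$ or closes a longer cycle. Rerouting a cycle $x_1\to y_2$, $x_2\to y_3,\dots$ to the in-ball matching $x_i\to y_i$ changes the cost from at least $\sum(\eta-2\varepsilon)$ to less than $\sum 2\varepsilon$. This is a strict improvement because $\eta>4\varepsilon$ fails in general, so the plain count does not suffice. Instead, I would bound the total directly: any cross pair alone costs more than $\eta-2\varepsilon>\varepsilon$, while $\fmetric(T_1,T_2)\le\fmetric(T_1,S)+\fmetric(S,T_2)\le2\varepsilon$. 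To close the gap I would sharpen the bound by comparing with $S$ ball by ball: $\fmetric(T_1,T_2)\le\sum_j[\fmetric(T_1\restrict U_j,m_j\di{s_j})+\fmetric(T_2\restrict U_j,m_j\di{s_j})]$. Then a cycle of length $\ell\ge2$ has rerouted cost at most the sum of those local distances, which is less than $\ell(\eta-2\varepsilon)$ once the $\eta/3$ margin is used. This makes $\sigma$ without cross pairs optimal, giving equality. The main obstacle is this exchange argument with the constant $\eta/3$.
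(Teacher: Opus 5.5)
Your proposal follows the paper's proof in all three parts. Part (1) is the same ball-by-ball matching, and part (2) is the same ``optimal permutation, then locate each matched point'' argument. The inequality that finally closes your (3) is exactly the paper's: cross pairs of an optimal $\sigma$ come at least two at a time, each of length at least $\eta-2\varepsilon$, and $2\eta-4\varepsilon>2\varepsilon$ holds precisely because $\varepsilon<\eta/3$.

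The rerouting step is unnecessary. You had already observed $\fmetric(T_1,T_2)\le 2\varepsilon$, so two cross pairs give $2\varepsilon\ge\fmetric(T_1,T_2)\ge 2\eta-4\varepsilon$, an immediate contradiction; this is how the paper argues. Your first attempt stalled only because you counted a single cross pair, forgetting that a pair leaving a ball forces one entering it, since the counts agree. Also delete the sentence asserting a strict improvement ``because $\eta>4\varepsilon$ fails'', which contradicts itself.

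The boundary case you flagged in (2) cannot be ``treated separately'', because the statement is false there. Take $W=\real$, $S=\di{0}+\di{10}$ (so $\eta=10$), $\varepsilon=1$ and $T=\di{1}+\di{10}$. Then $\fmetric(S,T)=1\le\varepsilon$, but $S(\openball{0}{1})=1\neq 0=T(\openball{0}{1})$. With $T_1=T$ and $T_2=S$ the same example breaks (3): $T_1\restrict\openball{0}{1}$ is the zero current rather than an element of $\qspace{1}{\real}$, so the right-hand side is undefined.

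The right repair is your second option made precise. Assume $\fmetric(S,T)<\varepsilon$ in (2) and $\fmetric(T_i,S)<\varepsilon$ in (3), or replace the open balls by $\closedball{y}{\varepsilon}$; your argument then goes through verbatim. The paper's proof contains the same slip, at the step asserting $z_{\sigma(\alpha)}\in\openball{y}{\varepsilon}$ for every $\alpha\in\mathcal{A}_y$. Every later application of the lemma in the paper does have strict inequality, so nothing downstream is affected.
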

\begin{proof}
Let us first prove $(1)$.
Since $\varepsilon<\frac{\eta}{2}$, the collection $\{\openball{y}{\varepsilon}:y\in\support{S}\}$ is pairwise disjoint and $S(\openball{y}{\varepsilon})=\density{0}{y}{S}$ for every $y\in\support{S}$.

If $S(\openball{y}{\varepsilon})=T(\openball{y}{\varepsilon})$, then $\sum_{z\in\support{T}\cap\openball{y}{\varepsilon}}\density{0}{T}{z}=\density{0}{S}{y}$ and for each $y\in\support{S}$ we can find points $z^y_1,\ldots,z^y_{\density{0}{S}{y}}\in\openball{y}{\varepsilon}$ (possibly with repetition) such that
\begin{equation*}
T=\sum_{y\in\support{S}}\sum_{\beta=1}^{\density{0}{S}{y}}\di{z^y_\beta}.
\end{equation*}
It follows that
\begin{equation*}
\fmetric(S,T)\leq\sum_{y\in\support{S}}\sum_{\beta=1}^{\density{0}{S}{y}}\|z^y_\beta-y\|\leq q\varepsilon.
\end{equation*}

To prove $(2)$ we write $S=\sum_{\alpha=1}^q\di{y_\alpha}$ and $\mathcal{A}_y=\{\alpha\in\{1,\ldots,q\}:y_\alpha=y\}$ for each $y\in\support{S}$.
If $T=\sum_{\alpha=1}^q\di{z_\alpha}$, then there exists $\sigma\in\perm{q}$ such that
\begin{equation*}
\begin{aligned}
\varepsilon\geq\fmetric(S,T) & = \sum_{\alpha=1}^q\|y_{\alpha}-z_{\sigma(\alpha)}\|\\
              & = \sum_{y\in\support{S}}\sum_{\alpha\in\mathcal{A}_y}\|y-z_{\sigma(\alpha)}\|
\end{aligned}
\end{equation*}
Thus, for each $\alpha\in\mathcal{A}_y$ we have $z_{\sigma(\alpha)}\in\openball{y}{\varepsilon}$.
Reversely, if $z_{\sigma(\alpha)}\in\openball{y}{\varepsilon}$ then $\alpha\in\mathcal{A}_y$.
Indeed, for all $y'\in\support{S}$ with $y\neq y'$ we have
\begin{equation*}
\begin{aligned}
\|z_{\sigma(\alpha)}-y'\| & \geq \|y-y'\|-\|z_{\sigma(\alpha)}-y\|\\
                          & \geq \eta-\varepsilon\\
													& > \varepsilon.
\end{aligned}
\end{equation*}
If $\alpha\in\mathcal{A}_{y'}$ then it would contradict $\fmetric(S,T)\leq\varepsilon$ due to the choice of $\sigma$.

Therefore,
\begin{equation*}
\begin{aligned}
&T\restrict\openball{y}{\varepsilon} =\sum_{\alpha\in\mathcal{A}_y}\di{z_{\sigma(\alpha)}},\\
&T=\sum_{y\in\support{A}} T\restrict\openball{y}{\varepsilon}\textup{ and }\\
&T(\openball{y}{\varepsilon})=\cardinality{\mathcal{A}_y}=S(\openball{y}{\varepsilon}).
\end{aligned}
\end{equation*}

To prove $(3)$ we write $T_i=\sum_{\alpha=1}^q\di{z_\alpha^i}$ for each $i=1,2$ and for each $y\in\support{S}$ define $\mathcal{B}^i_y=\{\alpha\in\{1,\ldots,q\}:z^i_\alpha\in\openball{y}{\varepsilon}\}$.
From the previous proof we know that $T_1(\openball{y}{\varepsilon})=T_2(\openball{y}{\varepsilon})$ and
\begin{equation*}
T_i=\sum_{y\in\support{S}}T_i\restrict\openball{y}{\varepsilon}\,\text{ for }i=1,2.
\end{equation*}
In particular, if $y,y'\in\support{S}$ with $y\neq y'$, then $\mathcal{B}^i_y\cap\mathcal{B}^i_{y'}=\emptyset$,
\begin{equation*}
\bigcup_{y\in\support{S}}\mathcal{B}^i_{y}=\{1,\ldots,q\}\,\text{ and }\cardinality{\mathcal{B}^1_y}=\cardinality{\mathcal{B}^2_y}.
\end{equation*}
We may re-order either $\{z_\alpha^1\}$ or $\{z_\alpha^2\}$ so that $\mathcal{B}^1_y=\mathcal{B}^2_y$ for all $y\in\support{S}$ and write $\mathcal{B}_y$ for both sets.
Thus,
\begin{equation*}
T_i\restrict\openball{y}{\varepsilon}=\sum_{\alpha\in\mathcal{B}_y}\di{z^i_\alpha}
\end{equation*}
and
\begin{equation*}
\begin{aligned}
\fmetric_{\density{0}{S}{y}}(T_1\restrict\openball{y}{\varepsilon},T_2\restrict\openball{y}{\varepsilon}) =\min\{ & \sum_{\alpha\in\mathcal{B}_y}\|z^1_\alpha-z^2_{\sigma(\alpha)}\|:  \sigma\in\perm{q}\\
                                   & \quad\text{ and }\sigma(\mathcal{B}_y)=\mathcal{B}_y\}.
\end{aligned}
\end{equation*}

%

It follows directly that
\begin{equation*}
\fmetric_{q}(T_1,T_2)\leq\sum_{y\in\support{S}}\fmetric_{\density{0}{S}{y}}(T_1\restrict\openball{y}{\varepsilon},T_2\restrict\openball{y}{\varepsilon}).
\end{equation*}

To prove the opposite inequality we fix a permutation $\sigma\in\perm{q}$ such that
\begin{equation*}
\fmetric_{q}(T_1,T_2)=\sum_{\alpha=1}^q\|z_\alpha^1-z_{\sigma(\alpha)}^2\|.
\end{equation*}
\begin{claim}
$\sigma(\mathcal{B}_y)=\mathcal{B}_y$ for all $y\in\support{S}$.
\end{claim}
Suppose false, that is, there exists $y_0\in\support{S}$ such that $\sigma(\mathcal{B}_{y_0})\not\subset\mathcal{B}_{y_0}$ and $\sigma^{-1}(\mathcal{B}_{y_0})\not\subset\mathcal{B}_{y_0}$.
Hence, we may find $y',y''\in\support{S}\setminus\{y_0\}$ and $\alpha',\alpha''\in\mathcal{B}_{y_0}$ such that $\sigma(\alpha')\in\mathcal{B}_{y'}$ and $\sigma^{-1}(\alpha'')\in\mathcal{B}_{y''}$.
Observe that $y'$ and $y''$ could happen to be equal, as could $\alpha'$ and $\alpha'' $.

In other words, $z_{\alpha'}^1,z_{\alpha''}^2\in\openball{y_0}{\varepsilon}$, $z_{\sigma(\alpha')}^2\in\openball{y'}{\varepsilon}$ and $z_{\sigma^{-1}(\alpha'')}^1\in\openball{y''}{\varepsilon}$.
Note that $\fmetric_q(T_1,T_2)\leq\fmetric_q(T_1,S)+\fmetric_q(S,T_2)\leq 2\varepsilon$ and compute:
\begin{equation*}
\begin{aligned}
2\varepsilon & \geq \fmetric_q(T_1,T_2)\\
             & \geq \|z^1_{\alpha'}-z^2_{\sigma(\alpha')}\| + \|z^1_{\sigma^{-1}(\alpha'')}-z^2_{\alpha''}\|\\
						 & \geq \|y_0-y'\| - \|z^1_{\alpha'}-y_0\| - \|z^2_{\sigma(\alpha')}-y'\|\\
						 & \quad + \|y_0-y''\| - \|z^2_{\alpha''}-y_0\| - \|z^1_{\sigma^{-1}(\alpha'')}-y''\|\\
						 & \geq 2\eta-4\varepsilon,
\end{aligned}
\end{equation*}
which contradicts $\varepsilon<\frac{\eta}{3}$ and proves the claim.
Hence,
\begin{equation*}
\fmetric_{q}(T_1,T_2)\geq\sum_{y\in\support{S}}\fmetric_{\density{0}{S}{y}}(T_1\restrict\openball{y}{\varepsilon},T_2\restrict\openball{y}{\varepsilon}).
\end{equation*}
\end{proof}

\begin{definition}[\cite{almgren2000}*{Definition 1.1(9)-(10)}]\label{definition of affinely approximable 1}
Let $V,W$ be finite dimensional normed vector spaces with norms $\|\cdot\|_V,\|\cdot\|_W$ repectively.
A function $A:V\rightarrow W$ is called an \textit{affine map} if there exists $w\in W$ such that $\langle v,L\rangle=A(v)-w$ is a linear map and in particular $w=A(0)$.
We denote by $\affspace{V}{W}$ the vector space of all affine maps between $V$ and $W$ and endow it with the norm
\begin{equation*}
\|A\|_{\mathbb{A}(V,W)}=\|A(0)\|_W+\|L\|_{\homspace{V}{W}},
\end{equation*}
where $\|L\|_{\homspace{V}{W}}=\sup\{\|\langle v,L\rangle\|_W:v\in V,\|v\|_V\leq 1\}$.

A multiple-valued function $F:V\rightarrow\qspace{q}{W}$ is called an \textit{affine function} if there exist affine maps $A_1,\ldots,A_q\in\affspace{V}{W}$ such that for all $v\in V$ we have
\begin{equation*}
F(v)=\sum_{\alpha=1}^q\di{A_\alpha(v)}.
\end{equation*}
Let $\Omega\subset V$ be a subset and $a\in\Omega$.
We say that $f:\Omega\rightarrow\qspace{q}{W}$ is \textit{affinely approximable} at $a$ if $a$ is in the interior of $\Omega$ and there exists an affine function $F_a:V\rightarrow\qspace{q}{W}$ such that
\begin{equation*}
\lim_{x\rightarrow a}\|x-a\|_V^{-1}\fmetric(f(x),F_a(x))=0.
\end{equation*}
Since $F_a$ is uniquely defined, we denote $\afa{f}{a}=\sum_{\alpha=1}^q\di{A_\alpha}\in\qspace{q}{\affspace{V}{W}}$ the associated multiple-valued affine approximation.

We say that $f$ is \textit{strongly affinely approximable} at $a\in\Omega$ if $f$ is affinely approximable at $a$ and given affine maps $A_1,\ldots,A_q$ defining $Af(a)$ we have that $A_\alpha(a)=A_\beta(a)$ implies $A_\alpha=A_\beta$.

Let
\begin{equation*}
\Omega'=\{a\in\Omega:\text{f is affinely approximable at }a\},
\end{equation*}
so that the affine approximation map
\begin{equation*}
Af:\Omega'\rightarrow \qspace{q}{\affspace{V}{W}}
\end{equation*}
is well defined.
\end{definition}

\begin{definition}\label{definition of derivative}
Given $V$ and $W$ vector spaces, there is a natural surjective map $\iota:\affspace{V}{W}\rightarrow\homspace{V}{W}$ that takes $A\in\affspace{V}{W}$ into $\iota(A)\in\homspace{V}{W}$ defined by $\langle v,\iota(a)\rangle=A(v)-A(0)$.
We induce a map $\iota_\sharp:\qspace{q}{\affspace{V}{W}}\rightarrow\qspace{q}{\homspace{V}{W}}$ by pushforward of measures.

Whenever $q\in\pinteger$ and $\Omega\subset V$ is a set, $a\in\Omega$ and $f:\Omega\rightarrow\qspace{q}{W}$ is affinely approximable at $a$, we define the derivative of $f$ at $a$ as
\begin{equation*}
\dfa{f}{a}=\iota_\sharp Af(a).
\end{equation*}
Equivalently, if $\afa{f}{a}=\sum_{\alpha=1}^q\di{A_\alpha}$ and $L_\alpha\in\homomorphism{V}{W}$ is given by $\langle v,L_\alpha\rangle = A_\alpha(v)-A_\alpha(0)$, then $\dfa{f}{a}=\sum_{\alpha=1}^q\di{L_\alpha}$.
\end{definition}

\begin{definition}
Let $V,W$ be finite dimensional vector spaces, $\Omega\subset V$ be a set, $q\in\pinteger$ and $f:\Omega\rightarrow\qspace{q}{W}$ be a multiple-valued function.
We say that a function $g:\Omega\rightarrow W^q$ is a \textit{selection function} for $f$ in $\Omega$ if for all $x\in\Omega$ we have
\begin{equation*}
f(x)=\sum_{\alpha=1}^q\di{g_\alpha(x)},
\end{equation*}
where we identify $g(x)=(g_\alpha(x))_{\alpha=1,\ldots,q}$ and $g_\alpha:\Omega\rightarrow W$ for each $\alpha=1,\ldots,q$.
\end{definition}

The continuity or differentiability class of the selection is preserved onto the multiple-valued function that it represents.
This follows trivially from the definition of selection and affine approximation.
Although the contrary is not always true, for example, a continuous multiple-valued valued function may not admit a continuous selection (see for example \cite{goblet06}*{p.304}).

\begin{lemma}\label{differentiability is preserved}
Let $V,W$ be finite dimensional vector spaces, $\Omega\subset V$ be a set, $q\in\pinteger$, $f:\Omega\rightarrow\qspace{q}{W}$ be a multiple-valued function and $g:\Omega\rightarrow W^q$ be a selection for $f$ in $\Omega$.
If $g$ is differentiable on $\Omega$ then $f$ is affinely approximable on $\Omega$ and $G:\Omega\rightarrow\affspace{V}{W}^q$ defined as $G(x)=(g_\alpha(x)-\langle x,Dg_\alpha(x)\rangle+Dg_\alpha(x))_{\alpha=1,\ldots,q}$ is a selection for $Af$.
In particular, if $g$ is of class $1$ then $Af$ is continuous.
\end{lemma}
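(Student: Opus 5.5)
Fix $a\in\Omega$ and write $g=(g_1,\ldots,g_q)$. Note first that affine approximability requires $a$ to lie in the interior of $\Omega$. Accordingly, I read the hypothesis ``$g$ differentiable on $\Omega$'' in the classical sense: each $g_\alpha$ is Fréchet differentiable at $a$, with $a$ interior, and $Dg_\alpha(a)\in\homspace{V}{W}$. The candidate affine approximation is built from the tangent maps of the components. For each $\alpha$, define
\begin{equation*}
A_\alpha(v)=g_\alpha(a)+\langle v-a,Dg_\alpha(a)\rangle .
\end{equation*}
This is an affine map with constant part $A_\alpha(0)=g_\alpha(a)-\langle a,Dg_\alpha(a)\rangle$ and linear part $Dg_\alpha(a)$. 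It is therefore exactly the $\alpha$-th component $G_\alpha(a)$ of $G$ from the statement, once we identify an affine map with the pair (constant part, linear part). Set $F_a=\sum_\alpha\di{A_\alpha}$; this is an affine function in the sense of Definition \ref{definition of affinely approximable 1}.

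The key estimate uses the identity permutation as a competitor in the infimum defining $\fmetric$:
\begin{equation*}
\fmetric(f(x),F_a(x))\leq\sum_{\alpha=1}^q\|g_\alpha(x)-g_\alpha(a)-\langle x-a,Dg_\alpha(a)\rangle\|_W .
\end{equation*}
Divide by $\|x-a\|_V$ and let $x\to a$. Each summand is $o(\|x-a\|)$ by differentiability of $g_\alpha$ at $a$, and there are finitely many summands, so the limit is $0$. Hence $f$ is affinely approximable at $a$. By the uniqueness asserted in Definition \ref{definition of affinely approximable 1}, $\afa{f}{a}=\sum_\alpha\di{G_\alpha(a)}$, which says precisely that $G$ is a selection for $Af$.

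Continuity comes next. If $g$ is of class $1$, then $x\mapsto g_\alpha(x)$ and $x\mapsto Dg_\alpha(x)$ are continuous. Since $x\mapsto\langle x,Dg_\alpha(x)\rangle$ is continuous as well, each $G_\alpha:\Omega\to\affspace{V}{W}$ is continuous. To pass to $Af$, use the identity permutation again:
\begin{equation*}
\fmetric(Af(x),Af(y))\leq\sum_\alpha\|G_\alpha(x)-G_\alpha(y)\|_{\affspace{V}{W}},
\end{equation*}
which tends to $0$ as $y\to x$. So $Af$ is continuous.

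The only delicate point is the uniqueness of $F_a$. The definition asserts it but does not prove it, and the identification $Af(a)=\sum\di{G_\alpha(a)}$ depends on it. If a proof is needed, I would argue as follows. Suppose two affine functions both approximate $f$ at $a$. Then they agree at $a$, and their $\fmetric$ distance is $o(\|x-a\|)$ along every ray $a+tv$. Restricted to a ray, each is a $q$-valued affine function of the single variable $t$. Two such functions that agree to first order at $t=0$ must have the same multiset of slopes in each cluster of coinciding values at $t=0$, which can be seen via Lemma \ref{lemma for local fmetric}(3) for small $t$. Since the constant terms and the directional slopes then agree for all $v$, the two affine functions coincide.
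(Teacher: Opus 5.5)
Your proof is correct and matches the paper, which gives no written proof and only says the lemma ``follows trivially from the definition of selection and affine approximation''; your identity-permutation bound by the single-valued Taylor errors, the appeal to the asserted uniqueness of $\afa{f}{a}$, and the identity-permutation bound for continuity of $Af$ are exactly that verification. Your optional uniqueness sketch skips one step at the end: the permutation matching the directional slopes may depend on $v$, so concluding that the multisets of linear parts coincide also needs the fact that $V$ is not a finite union of the proper linear subspaces on which a fixed matching of the slopes holds, applied cluster by cluster.
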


\section{Multiple-Valued Polynomial Functions}

Even though we only discuss first order approximations in this paper, we will introduce the notion of multiple-valued polynomials in a broader sense that may also be used for higher order approximations in the future.

\begin{definition}[\cite{federer1969}*{1.10.1-4}]
Let $V$ and $W$ be finite dimensional normed vector spaces, we define \textit{homogeneous polynomial functions} and \textit{polynomial functions} between $V$ and $W$ as in \cite{federer1969}*{1.10.4}.
If $k\in\pinteger\cup\{0\}$ then we denote
\begin{equation*}
\polyspace{k}{V}{W}=\bigoplus_{j=0}^k{\bigodot}^j(V,W)
\end{equation*}
as the space of all polynomial functions between $V$ and $W$ of degree at most $k$.
In particular, $\polyspace{1}{V}{W}=W\oplus\homspace{V}{W}$.

For $\phi_j\in{\bigodot}^j(V,W)$ we define the norm 
\begin{equation*}
\|\phi_j\|_{\odot^j}=\sup\{\|\phi_j(v_1,\ldots,v_j)\|_W:\|v_i\|_V\leq 1 \text{ for }i=1,\ldots,j\},
\end{equation*}
with the convention that $\|\phi_0\|_{\odot^0}=\|\phi_0\|_W$.
Whenever $\phi\in\polyspace{k}{V}{W}$ with coordinates $\phi_j\in\bigodot^j(V,W)$ we define $\|\phi\|_{\mathbb{P}^k}$ as the $L^1$-norm induced by $\{\|\cdot\|_{\odot^j}\}_{j=1,\ldots,k}$:
\begin{equation*}
\|\phi\|_{\mathbb{P}^k}=\sum_{j=0}^k\|\phi_j\|_{\odot^j}.
\end{equation*}

Furthermore, for $x\in V$ we write
\begin{equation*}
\phi(x)=\sum_{j=0}^k\frac{1}{j!}\langle x^j, \phi_j\rangle\in W,
\end{equation*}
where $x^j=x\odot\cdots\odot x\in\bigodot_j V$.
\end{definition}

\begin{remark}\label{remark on norm op of affine map}
It follows from \cite{federer1969}*{1.10.4,3.1.11} that for $\phi\in\polyspace{k}{V}{W}$ we have $\phi_j=D^j\phi(0)$ for each $j=0,\ldots,k$ and
\begin{equation*}
\|\phi\|_{\mathbb{P}^k}=\sum_{j=0}^k\|D^j\phi(0)\|_{\odot^j},
\end{equation*}
where $D^0\phi=\phi$.
In particular, $D^k\phi(0)=D^k\phi(a)=\phi_k$ for any $a\in V$.
\end{remark}

\begin{definition}
Let $V,W$ be finite dimensional normed vector spaces, $q\in\pinteger$, $k\in\pinteger\cup\{0\}$, 
$\Phi=\sum_{\alpha=1}^q\di{\phi_\alpha}\in\qspace{q}{\polyspace{k}{V}{W}}$ and $a\in V$.
We define the corresponding $\Phi$ induced multiple-valued polynomial centered at $a$, $\up{P}_{a,\Phi}:V\rightarrow\qspace{q}{W}$ as:
\begin{equation*}
\up{P}_{a,\Phi}(x)=\sum_{\alpha=1}^q\di{\phi_\alpha(x-a)}.
\end{equation*}
\end{definition}

Our goal is to define the multiple-valued equivalent of a Taylor polynomial of order $1$ as a first order approximation of a function.
We would like to write the corresonding polynomial with varying center because there is no canonical way of associating an affine map to the coefficients of the approximating polynomial.
As such, we will make a notational distinction between the affine approximation, the associated Taylor polynomial and its corresponding multiple-valued polynomial function.
At this point the alternative notation may seem redundant, but we recall that the spaces $\affspace{V}{W}$ and $\polyspace{1}{V}{W}$ are fundamentally different.
In the future this alternative notation will be more relevant when one considers higher order approximations.

\begin{definition}
Let $V,W$ be finite dimensional normed vector spaces, $q\in\pinteger$, $\Omega\subset V$, $a\in\Omega$ and $f:\Omega\rightarrow\qspace{q}{W}$ be a multiple-valued function that is affinely approximable at $a$ with $\afa{f}{a}=\sum_{\alpha=1}^q\di{A_\alpha}\in\qspace{q}{\affspace{V}{W}}$ and $L_\alpha\in\homomorphism{V}{W}$ the associated linear map to $A_\alpha$ for each $\alpha=1,\ldots,q$.

We define the pointwise $1$-jet of $f$ at $a$ as
\begin{equation*}
\tfa{f}(a)=\sum_{\alpha=1}^q\di{(A_\alpha(a),L_\alpha)}\in\qspace{q}{\polyspace{1}{V}{W}}.
\end{equation*}

Its induced multiple-valued polynomial centered at $a$ is the multiple-valued Taylor polynomial of $f$ at $a$ denoted by $\pfa{a}{f}:V\rightarrow\qspace{q}{W}$.
That is,
\begin{equation*}
\pfa{a}{f}=\up{P}_{a,\tfa{f}(a)}.
\end{equation*}
We also define the multiple-valued derivative map of $f$ at $a$, $\up{D}^1_af:V\rightarrow\qspace{q}{W}$ as
\begin{equation*}
\up{D}^1_af(v)=\sum_{\alpha=1}^q\di{\langle v,L_\alpha\rangle}.
\end{equation*}
\end{definition}

\begin{remark}
Note that if $f$ is affinely approximable at $a$ then
\begin{equation*}
\lim_{x\rightarrow a}\|x-a\|^{-1}_V\fmetric(f(x),\pfa{a}{f}(x))=0.
\end{equation*}
Reversely, if there exists $\Phi\in\qspace{q}{\polyspace{1}{V}{W}}$ such that
\begin{equation*}
\lim_{x\rightarrow a}\|x-a\|^{-1}_V\fmetric(f(x),\up{P}_{a,\Phi}(x))=0,
\end{equation*}
then $f$ is affinely approximable at $a$ with $\tfa{f}(a)=\Phi$.
\end{remark}
\begin{remark}
If $g:\Omega\rightarrow W^q$ and $M:\Omega\rightarrow \homspace{V}{W}^q$ are arbitrary selections for $f$ and $Df$ respectively, then for each $a$ we can find a permutation $\sigma_a\in\perm{q}$ such that
\begin{equation*}
\tfa{f}(a)=\sum_{\alpha=1}^q\di{(g_\alpha(a),M_{\sigma_a(\alpha)}(a))}.
\end{equation*}
\end{remark}

For real valued polynomials in one dimension, the space of polynomials as a vector space determined by its coefficients and as a subspace of continuous functions with pointwise convergence are equivalent.
The following couple of Lemmas will prove an analogue of this fact for finite families of vector-valued polynomial functions.

\begin{lemma}[Combinatorial Lemma]\label{combinatorial lemma}
Suppose $N\in\pinteger$ and $L\in[1,\infty)$.
There exists a constant $\Gamma_1(N,L)\in(0,\infty)$ with the following property.

If $V$ is a finite dimensional normed vector space, $a\in V$, $r\in(0,\infty)$, $X\subset\closedball{a}{r}$ and $\nu:X\rightarrow\{1,\ldots,N\}$ satisfy
\begin{equation*}
\sup\{d_V(y,X):y\in\closedball{a}{r}\}\leq \Gamma_1^{-1}r,
\end{equation*}
then there exist $b\in\closedball{a}{r}$, $s\in(0,\infty)$ and $i_0\in\{1,\ldots,N\}$ such that
\begin{enumerate}[(1)]
\item $\closedball{b}{s}\subset\closedball{a}{r}$;
\item $\Gamma_1 s\geq r$ and 
\item $\sup\{d_V(y,\{x\in X:\nu(x)=i_0\}):y\in\closedball{b}{s}\}\leq L^{-1}s$.
\end{enumerate}
\end{lemma}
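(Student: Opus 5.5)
We argue by induction on $N$, allowing $L$ to change in the inductive step, and choose the constants so that $\Gamma_1(N,L)$ depends only on $N$ and $L$ and never on $\dimension{V}$. Every step uses only the triangle inequality and balls, so no dimension dependence can enter.

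\textbf{Base case $N=1$.} Take $\Gamma_1(1,L)=L$, $b=a$, $s=r$ and $i_0=1$. Condition (3) is then exactly the hypothesis.

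\textbf{Inductive step.} Assume $\Gamma_1(N-1,\cdot)$ has been constructed, and set $\delta=\Gamma_1(N,L)^{-1}r$ and $\rho=r/(2L)$; the constant $\Gamma_1(N,L)$ will be fixed at the end. There are two cases.

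\emph{First case.} Suppose the colour $N$ points are $L^{-1}(r/2)$-dense in $\closedball{a}{r/2}$. Then we are done with $b=a$, $s=r/2$ and $i_0=N$, provided $\Gamma_1(N,L)\geq 2$.

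\emph{Second case.} Otherwise there is $y\in\closedball{a}{r/2}$ such that $\openball{y}{\rho}$ contains no point $x\in X$ with $\nu(x)=N$. Since $\rho\leq r/2$, we have $\closedball{y}{\rho}\subset\closedball{a}{r}$. We now build a set to which the inductive hypothesis applies.
\begin{enumerate}[(i)]
\item Let $X_0$ be the set of $x\in X$ with $\|x-y\|<\rho/2+\delta$. Assume $\delta<\rho/2$; then $X_0\subset\openball{y}{\rho}$, so $\nu$ takes values in $\{1,\ldots,N-1\}$ on $X_0$.
\item Define the radial retraction $\pi(x)=y+\min\{1,\frac{\rho/2}{\|x-y\|}\}(x-y)$. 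It maps $X_0$ into $\closedball{y}{\rho/2}$ and moves each point of $X_0$ by at most $\delta$.
\item Put $X'=\pi(X_0)$, coloured by $\nu'(\pi(x))=\nu(x)$, choosing any preimage if several points have the same image.
\item For every $z\in\closedball{y}{\rho/2}$ the hypothesis gives $x\in X$ with $\|x-z\|\leq\delta$. Such an $x$ lies in $X_0$, so $d_V(z,X')\leq 2\delta$. Hence $X'$ is $2\delta$-dense in $\closedball{y}{\rho/2}$ and carries $N-1$ colours.
\end{enumerate}

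Now apply the inductive hypothesis with $N-1$ colours, the ball $\closedball{y}{\rho/2}$ and the parameter $2L$. This requires
\begin{equation*}
2\delta\leq\Gamma_1(N-1,2L)^{-1}\rho/2.
\end{equation*}
It yields $b$, $s$ and $i_0$ with $\closedball{b}{s}\subset\closedball{y}{\rho/2}\subset\closedball{a}{r}$, with $\Gamma_1(N-1,2L)\,s\geq\rho/2$, and with the colour $i_0$ points of $X'$ being $s/(2L)$-dense in $\closedball{b}{s}$.

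Passing back from $X'$ to $X$ moves each point by at most $\delta$. So the colour $i_0$ points of $X$ are $(s/(2L)+\delta)$-dense in $\closedball{b}{s}$. It remains to absorb $\delta$, which requires $\delta\leq s/(2L)$. Since $s\geq r/(4L\,\Gamma_1(N-1,2L))$, it suffices that
\begin{equation*}
\delta\leq\frac{r}{8L^2\,\Gamma_1(N-1,2L)}.
\end{equation*}
All of the requirements on $\delta$ and on $\Gamma_1(N,L)\,s\geq r$ are met by
\begin{equation*}
\Gamma_1(N,L)=16L^2\,\Gamma_1(N-1,2L),
\end{equation*}
which is at least $2$ and at least $4L\,\Gamma_1(N-1,2L)$, as needed.

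The main obstacle is the localisation step in the second case. Restricting $X$ to the small ball destroys the density near its boundary, because the nearest point of $X$ to some $z$ may fall just outside. The radial retraction repairs this at the cost of an error $\delta$. That error is then absorbed by doubling $L$ in the inductive call, which is why the recursion for $\Gamma_1(N,\cdot)$ involves $\Gamma_1(N-1,2L)$ rather than $\Gamma_1(N-1,L)$.
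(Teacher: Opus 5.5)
Your proof is correct, but it takes a different route from the paper's.

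\textbf{The paper's route.} The paper does not induct on $N$. It fixes $\Gamma_1=(4L)^N$ once and runs a single descent through balls of radii $r_k=r/(2(4L)^{k-1})$. Each ball is covered by balls of radius $r_k/(2L)$. At stage $k$, either every cover ball contains a point of colour $N-k+1$, and it stops with $b=c_k$, $s=r_k$, $i_0=N-k+1$. Or some cover ball contains no colour $\geq N-k+1$, and it zooms into that ball. At stage $N$ only colour $1$ is left, so the first alternative is forced. The original density hypothesis at scale $r/(4L)^N$ is fine enough to put a point of $X$ in every cover ball at every stage. So the paper never has to re-verify the lemma's hypothesis for a localised set, and needs no retraction and no change of $L$. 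Its constant $(4L)^N$ is also much smaller than yours: $\Gamma_1(N,L)=16L^2\,\Gamma_1(N-1,2L)$ grows like $2^{O(N^2)}L^{2N-1}$. None of this matters for the application in Lemma \ref{polynomial lemma}, which only needs existence.

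\textbf{What your route buys and costs.} Your approach is more modular, since the $(N-1)$-colour case is used as a black box. The price is the constraint $X'\subset\closedball{y}{\rho/2}$ in the inductive hypothesis, and that is exactly what the radial retraction and the doubling of $L$ pay for. Note that neither argument ever uses $X\subset\closedball{a}{r}$. If you prove the statement without that containment, then $X_0=X\cap\openball{y}{\rho}$ already satisfies the density hypothesis on $\closedball{y}{\rho/2}$ once $\delta<\rho/2$. You could then recurse with the same $L$ and get a constant like $8L\,\Gamma_1(N-1,L)$, which is of the paper's order and needs no linear structure.

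\textbf{A small point to tidy.} The bound $d_V(z,X)\le\delta$ does not give an $x\in X$ with $\|x-z\|\le\delta$ unless the infimum is attained. Because $X_0$ is defined by a strict inequality, the step ``such an $x$ lies in $X_0$'' can then fail at the boundary. Take $X_0$ of radius $\rho/2+2\delta$ and choose $x$ with $\|x-z\|<2\delta$. Then $X'$ is $4\delta$-dense and the error in passing back is below $2\delta$, and $16L^2\,\Gamma_1(N-1,2L)$ still meets every requirement, with the last one holding with equality. The paper's argument has the same harmless slip at its final stage, where $\Gamma_1^{-1}r=r_N/(2L)$ exactly.
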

\begin{proof}
When $N=1$ the result is trivial with $\Gamma_1=L$, $b=a$ and $s=r$.
Suppose $N\geq 2$, choose $\Gamma_1=(4L)^N$, define $c_1=a$ and $r_1=\frac{r}{2}$.
We can find a finite set $\mathcal{C}_1\subset\closedball{c_1}{r_1}$ such that $\closedball{c_1}{r_1}\subset\cup_{c\in\mathcal{C}_1}\closedball{c}{\frac{r_1}{2L}}$.
By choice of $\Gamma_1$, it follows that $X\cap\closedball{c}{\frac{r_1}{2L}}\neq\emptyset$ for all $c\in\mathcal{C}_1$.

Now, suppose that for some $k\in\{1,\ldots,N-1\}$ we have defined for each $i=1,\ldots,k$ a point $c_i\in\closedball{a}{r}$, $r_i=\frac{r}{2(4L)^{i-1}}$ and a finite set $\mathcal{C}_i\subset\closedball{a}{r}$ such that:	
\begin{enumerate}[(i)]
\item $c_{i}\in\closedball{c_{i-1}}{r_{i-1}}$ if $i\geq 2$;
\item $\mathcal{C}_i\subset \closedball{c_{i-1}}{r_{i-1}}$ if $i\geq 2$;
\item $\closedball{c_i}{r_i}\subset\cup_{c\in\mathcal{C}_i}\closedball{c}{\frac{r_i}{2L}}$ and
\item $\sup\{\nu(x):x\in X\cap\closedball{c}{\frac{r_i}{2L}},c\in \mathcal{C}_i\}\leq N-i+1$.
\end{enumerate}

From the choice of $\Gamma_1$ we have $X\cap\closedball{c_k}{r_k}\neq \emptyset$.
Furthermore, property $(iv)$ implies that either
\begin{enumerate}[(a)]
\item for all $c\in\mathcal{C}_k$ there exists $x_c\in X\cap\closedball{c}{\frac{r_k}{2L}}$ with $\nu(x_c)=N-k+1$ or
\item there exist $c_{k+1}\in\mathcal{C}_k$ such that
\begin{equation*}
\sup\{\nu(x):x\in X\cap\closedball{c_{k+1}}{\frac{r_k}{2L}}\}\leq N-k.
\end{equation*}
\end{enumerate}

If case $(a)$ holds then we can choose $b=c_k$, $s=r_k=\frac{r}{2(4L)^{k-1}}$ and $i_0=N-k+1$.
Properties $(1)$ and $(2)$ follow trivially.
Take $y\in\closedball{b}{s}$ then from property $(iii)$ there exists $c\in\mathcal{C}_k$ such that $y\in\closedball{c}{\frac{r_k}{2L}}$.
Hence $d_V(y,x_c)\leq \frac{r_k}{L}=L^{-1}s$, which proves $(3)$.
If case $(b)$ holds then define $r_{k+1}=\frac{r_k}{4L}=\frac{r}{2(4L)^k}$ and we can find a finite set $\mathcal{C}_{k+1}\subset\closedball{c_{k}}{r_k}$ so that $c_{k+1}$, $r_{k+1}$, $\mathcal{C}_{k+1}$ satisfy conditions $(i)$-$(iv)$.

Following the inductive construction above, either condition $(a)$ will hold for some $k\in\{1,\ldots,N-1\}$ or the construction will continue until $k+1=N$ in which case condition $(b)$ is impossible since by choice of $\Gamma_1$ we have $X\cap\closedball{c_N}{\frac{r_N}{2L}}\neq\emptyset$. That is, condition $(a)$ must hold and the choice $i_0=1$ concludes the proof.

\end{proof}

\begin{remark}
The above Lemma also holds true if we only require $V$ to be a locally compact metric space as this is the only property of finite dimensional vector spaces that was used in the proof.
\end{remark}

\begin{lemma}\label{polynomial lemma}
Suppose $m,N\in\pinteger$ and $k\in\pinteger\cup\{0\}$.
There exists a constant $\Gamma_2(m,k,N)\in(0,\infty)$ with the following property.
If $V,W$ are finite dimensional normed vector spaces, $\dimension{V}\leq m$, $a\in V$, $r\in(0,\infty)$, $\mathcal{U}\subset\polyspace{k}{V}{W}$ is a non-empty finite set with $\cardinality{\mathcal{U}}\leq N$ and $X\subset\closedball{a}{r}$ satisfy
\begin{equation*}
\sup\{d_V(y,X):y\in\closedball{a}{r}\}\leq\Gamma_2^{-1}r,
\end{equation*}
then
\begin{equation*}
\inf\{\sum_{j=0}^kr^j\|D^j\phi(a)\|_{\odot^j}:\phi\in\mathcal{U}\}\leq\Gamma_2\sup\{\inf\{\|\phi(x)\|_W:\phi\in\mathcal{U}\}:x\in X\}
\end{equation*}
\end{lemma}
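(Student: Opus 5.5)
We combine the Combinatorial Lemma \ref{combinatorial lemma} with a single-polynomial estimate obtained from compactness and equivalence of norms on a finite dimensional space. Put $M=\sup\{\inf\{\|\phi(x)\|_W:\phi\in\mathcal{U}\}:x\in X\}$. We may assume $M<\infty$. Write $\mathcal{U}=\{\phi_1,\ldots,\phi_{N'}\}$ with $N'\leq N$. For each $x\in X$ choose $\nu(x)\in\{1,\ldots,N'\}$ with $\|\phi_{\nu(x)}(x)\|_W=\inf\{\|\phi(x)\|_W:\phi\in\mathcal{U}\}\leq M$; the infimum is a minimum because $\mathcal{U}$ is finite. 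The goal is to find one index $i_0$ whose polynomial $\phi_{i_0}$ is bounded by $M$ on a set that is dense at a definite scale inside a ball of comparable radius. Lemma \ref{combinatorial lemma} with $N$ and a constant $L=L(m,k)$ to be fixed supplies exactly this. Take $\Gamma_2\geq\Gamma_1(N,L)$. Then there are $b\in\closedball{a}{r}$, $s\geq\Gamma_1^{-1}r$ and $i_0$ such that $\closedball{b}{s}\subset\closedball{a}{r}$, and every point of $\closedball{b}{s}$ lies within $L^{-1}s$ of $X_{i_0}=\{x\in X:\nu(x)=i_0\}$. Moreover $\|\phi_{i_0}\|\leq M$ on $X_{i_0}$.

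The second step is the single-polynomial estimate, proved by rescaling and compactness.

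\textbf{Claim.} There exist $L(m,k)$ and $C(m,k)$ such that the following holds. Let $\phi\in\polyspace{k}{V}{W}$, $\dimension{V}\leq m$, and let $Y\subset\closedball{b}{s}$ be $L^{-1}s$-dense in $\closedball{b}{s}$ with $\|\phi\|\leq M$ on $Y$. Then $\sum_j s^j\|D^j\phi(b)\|_{\odot^j}\leq CM$.

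To prove the claim, rescale via $\psi(v)=\phi(b+sv)$. This reduces to $b=0$ and $s=1$, since $D^j\psi(0)=s^jD^j\phi(b)$. By Taylor's formula for polynomials, $\|D\psi\|\leq c\|\psi\|_{\mathbb{P}^k,0}$ on $\closedball{0}{1}$, where $\|\psi\|_{\mathbb{P}^k,0}=\sum_j\|D^j\psi(0)\|$. Hence
\[
\sup_{\closedball{0}{1}}\|\psi\|\leq M+cL^{-1}\|\psi\|_{\mathbb{P}^k,0}.
\]
It remains to compare $\sup_{\closedball{0}{1}}\|\psi\|$ with $\|\psi\|_{\mathbb{P}^k,0}$ uniformly in $V,W$. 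Reduce to $W=\real$ by composing with norm-one functionals $\lambda$ (Hahn--Banach): $\|D^j\psi(0)\|_{\odot^j}=\sup_\lambda\|D^j(\lambda\circ\psi)(0)\|$. For scalar polynomials, first restrict to lines through $0$ and apply Markov-type equivalence of norms on one-variable polynomials of degree $\leq k$. This controls $\|D^j\psi(0)(v,\ldots,v)\|$ for $\|v\|\leq1$, and polarization then gives the full multilinear norm with a constant depending only on $k$. Note that the dimension bound $m$ is not even needed in this approach.

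Conclude as follows. Choosing $L=2c$ gives $\|\psi\|_{\mathbb{P}^k,0}\leq C'(M+\tfrac12\|\psi\|_{\mathbb{P}^k,0})$; to absorb the last term, take $L$ larger so that $C'cL^{-1}\leq\frac12$. This yields $\sum_js^j\|D^j\phi_{i_0}(b)\|\leq 2C'M$.

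Finally, recenter from $b$ to $a$. Since $\|b-a\|\leq r$,
\[
D^j\phi(a)=\sum_{i\geq j}\frac{1}{(i-j)!}\langle (a-b)^{i-j},D^i\phi(b)\rangle,
\]
so $r^j\|D^j\phi(a)\|\leq\sum_{i\geq j}\frac{1}{(i-j)!}r^i\|D^i\phi(b)\|$. Since $r\leq\Gamma_1 s$, this gives
\[
\sum_jr^j\|D^j\phi_{i_0}(a)\|\leq e\,\Gamma_1^k\cdot 2C'M.
\]
Setting $\Gamma_2=\max\{\Gamma_1(N,L),2eC'\Gamma_1^k\}$ completes the argument.

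The main obstacle I expect is the claim: obtaining constants independent of the particular spaces $V,W$ when passing from sup bounds on a dense subset to coefficient bounds. I would handle it by the line restriction plus polarization argument sketched above rather than by a compactness argument over all spaces.
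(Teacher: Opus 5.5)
Your proposal is correct and follows the paper's proof step for step. The Combinatorial Lemma isolates one index $i_0$ whose set $X_{i_0}$ is $L^{-1}s$-dense in a ball $\closedball{b}{s}\subset\closedball{a}{r}$ with $\Gamma_1 s\geq r$. A single-polynomial estimate at scale $s$ then bounds $\sum_j s^j\|D^j\phi_{i_0}(b)\|_{\odot^j}$, and Taylor's formula recenters from $b$ to $a$ at the cost of a factor of order $\Gamma_1^k$. The only difference is that the paper cites Menne's Lemma 2.1 for the single-polynomial estimate, whereas you prove it directly by mean-value absorption plus a Markov inequality via restriction to lines and polarization, which indeed gives a constant independent of $m$.

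One small repair is needed. The points of $X_{i_0}$ that witness the density may lie slightly outside $\closedball{b}{s}$, so your Claim should allow $Y\subset\closedball{b}{2s}$, exactly as the paper uses $X'=X_{i_0}\cap\closedball{b}{2s}$. This only requires bounding $\|D\psi\|$ on $\closedball{0}{2}$, which replaces $c$ by at most $2^k$.
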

\begin{proof}
Let $\Gamma_0(\max\{m,k\})>1$ be given by \cite{menne2019}*{Lemma 2.1} and $\Gamma_1(N,\Gamma_0)$ be given by Lemma \ref{combinatorial lemma}.
We will choose $\Gamma_2=\max\{(k+1)\Gamma_0(1+\Gamma_1)^k,\Gamma_1\}$.

Write $\mathcal{U}=\{\phi_1,\ldots,\phi_l\}$ for some positive integer $l\leq N$ and let $\nu:X\rightarrow\{1,\ldots,N\}$ be an arbitrary selection function satisfying
\begin{equation*}
\inf\{\|\phi(x)\|_W:\phi\in\mathcal{U}\}=\|\phi_{\nu(x)}(x)\|_W
\end{equation*}
for all $x\in X$.

It follows from Lemma \ref{combinatorial lemma} that there exist $b\in\closedball{a}{r}$, $s\in(0,\infty)$ and $i_0\in\{1,\ldots,N\}$ such that $\closedball{b}{s}\subset\closedball{a}{r}$, $\Gamma_1s\geq r$ and
\begin{equation*}
\sup\{d_V(y,\{x\in X:\nu(x)=i_0\}):y\in\closedball{b}{s}\}\leq\Gamma_0^{-1}s.
\end{equation*}

Put $X'=\{x\in X:\nu(x)=i_0\}\cap\closedball{b}{2s}$ and $\phi'=\phi_{i_0}$.
We note that \cite{menne2019}*{Lemma 2.1} holds true on $\closedball{b}{s}$ with respect to $X'\subset\closedball{b}{2s}$.
Therefore,

\begin{equation*}
\begin{aligned}
\sup\{s^i\|D^i\phi'(y)\|_{\odot^i}:i=0,\ldots,k,y\in\closedball{b}{s}\}  \leq\Gamma_0\sup\{\|\phi'(x)\|_W:x\in X'\}&\\
                                                            = \Gamma_0\sup\{\inf\{\|\phi(x)\|_W:\phi\in\mathcal{U}\}:x\in X'\}&\\
																													  \leq \Gamma_0\sup\{\inf\{\|\phi(x)\|_W:\phi\in\mathcal{U}\}:x\in X\}&.
\end{aligned}
\end{equation*}
Since $\Gamma_1 s\geq r$, we have
\begin{equation*}
\|D^i\phi'(b)\|_{\odot^i}\leq\frac{\Gamma_1^i}{r^i}\Gamma_0\sup\{\inf\{\|\phi(x)\|_W:\phi\in\mathcal{U}\}:x\in X\},
\end{equation*}
for all $i=0,\ldots, k$.
It follows from Taylor's Formula \cite{federer1969}*{1.10.4,3.1.10} that
\begin{equation*}
D^i\phi'(a)=\sum_{l=i}^k\frac{(a-b)^{l-i}}{(l-i)!}\lrcorner D^l\phi'(b).
\end{equation*}
Hence,
\begin{equation*}
\begin{aligned}
\|D^i\phi'(a)\|_{\odot^i} & \leq \sum_{l=i}^k\frac{r^{l-i}}{(l-i)!}\|D^l\phi'(b)\|_{\odot^l}\\
             & \leq \sum_{l=i}^k\frac{r^{l-i}}{(l-i)!}\frac{\Gamma_1^l}{r^l}\Gamma_0\sup\{\inf\{\|\phi(x)\|_W:\phi\in\mathcal{U}\}:x\in X\}\\
						 & \leq \frac{(1+\Gamma_1)^k\Gamma_0}{r^i}\sup\{\inf\{\|\phi(x)\|_W:\phi\in\mathcal{U}\}:x\in X\}
\end{aligned}
\end{equation*}
for all $i=0,\ldots,k$.
The result follows by observing that
\begin{equation*}
\inf\{\sum_{j=0}^kr^j\|D^j\phi(a)\|_{\odot^j}:\phi\in\mathcal{U}\}\leq(k+1)\max\{r^i\|D^i\phi'(a)\|_{\odot^i}:i=0,\ldots,k\}.
\end{equation*}
\end{proof}

Although we may view a point in $\qspace{q}{W}$ as an element of $W^q$, this inclusion depends on a choice of ordering.
Since a priori there is no preferential ordering for representatives of multiple-valued polynomial functions, we consider the inclusion on $W^q$ with respect to all possible permutations.
In other words, we may view multiple-valued polynomial functions as finite families of (single-valued) polynomial functions and use the combinatorial results above to show that even in the multiple-valued case, pointwise convergence of polynomials as functions is equivalent to the convergence of its coefficients.

\begin{lemma}\label{multivalued combinatorial lemma}
Suppose $m,q\in\pinteger$, $k\in\pinteger\cup\{0\}$.
There exists a positive constant $\Gamma_3(m,k,q)\in(0,\infty)$ with the following property.
If $r\in(0,\infty)$, $V,W$ are finite dimensional normed vector spaces, $a,b,c\in V$ and $\Phi,\Psi\in\qspace{q}{\polyspace{k}{V}{W}}$ are given by $\Phi=\sum_{\alpha=1}^q\di{\phi_\alpha}$, $\Psi=\sum_{\alpha=1}^q\di{\psi_\alpha}$, $\dimension{V}\leq m$ and $X\subset\closedball{c}{r}$ satisfy
\begin{equation*}
\sup\{d_V(y,X):y\in\closedball{c}{r}\}\leq\Gamma_3^{-1}r,
\end{equation*}
then
\begin{equation*}
\begin{aligned}
\inf\{\sum_{\alpha=1}^q\sum_{j=0}^k & r^j\|D^j{{\phi_{\alpha}}}(c-a)-D^j{{\psi_{\sigma(\alpha)}}}(c-b)\|_{\odot^j}:\sigma\in\Pi_q\}\leq\\
  &\quad\leq \Gamma_3\sup\{\fmetric_{q,\|\|_W}(\up{P}_{a,\Phi}(x),\up{P}_{b,\Psi}(x)):x\in X\}
\end{aligned}
\end{equation*}
and
\begin{equation*}
\begin{aligned}
\sup\{&\fmetric_{q,\|\|_W}(\up{P}_{a,\Phi}(x),\up{P}_{b,\Psi}(x)):x\in\closedball{c}{r}\}\leq\\
  &\quad\leq\inf\{\sum_{\alpha=1}^q\sum_{j=0}^k r^j\|D^j{{\phi_{\alpha}}}(c-a)-D^j{{\psi_{\sigma(\alpha)}}}(c-b)\|_{\odot^j}:\sigma\in\Pi_q\}.
\end{aligned}
\end{equation*}
\end{lemma}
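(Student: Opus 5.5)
The plan is to reduce the first inequality to Lemma \ref{polynomial lemma}, applied to the finite family of all permuted differences. The second inequality will follow directly from Taylor's formula.

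For each $\alpha$ and $\beta$, define the single-valued polynomial $\chi_{\alpha\beta}\in\polyspace{k}{V}{W}$ by $\chi_{\alpha\beta}(x)=\phi_\alpha(x-a)-\psi_\beta(x-b)$. Then $D^j\chi_{\alpha\beta}(c)=D^j\phi_\alpha(c-a)-D^j\psi_\beta(c-b)$. For $\sigma\in\perm{q}$, consider the polynomial map $\chi_\sigma:V\rightarrow W^q$, $\chi_\sigma=(\chi_{\alpha\sigma(\alpha)})_{\alpha}$, where $W^q$ carries the $\ell^1$-norm $\|(w_\alpha)\|=\sum_\alpha\|w_\alpha\|_W$. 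With this norm, $\fmetric(\up{P}_{a,\Phi}(x),\up{P}_{b,\Psi}(x))=\min_\sigma\|\chi_\sigma(x)\|_{W^q}$. Moreover, the $j$-th derivative norms satisfy
\begin{equation*}
\max_\alpha\|D^j\chi_{\alpha\sigma(\alpha)}(c)\|_{\odot^j}\leq\|D^j\chi_\sigma(c)\|_{\odot^j}\leq\sum_\alpha\|D^j\chi_{\alpha\sigma(\alpha)}(c)\|_{\odot^j}.
\end{equation*}
Only the first of these inequalities matters here, and it loses at most a factor $q$ against the sum.

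Next, apply Lemma \ref{polynomial lemma} with target space $W^q$, centre $c$, the set $\mathcal{U}=\{\chi_\sigma:\sigma\in\perm{q}\}$ and $N=q!$. This gives
\begin{equation*}
\inf_\sigma\sum_{j=0}^k r^j\|D^j\chi_\sigma(c)\|_{\odot^j}\leq\Gamma_2\sup_{x\in X}\fmetric(\up{P}_{a,\Phi}(x),\up{P}_{b,\Psi}(x)).
\end{equation*}
The left-hand side dominates $q^{-1}$ times the quantity in the statement, so the first inequality holds with $\Gamma_3=q\,\Gamma_2(m,k,q!)$. The main point to check is that Lemma \ref{polynomial lemma} really allows an arbitrary finite-dimensional normed target (here $W^q$). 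It does, since the constant depends only on $m$, $k$ and $N$. One should also note that duplicates in $\mathcal{U}$ are harmless, since they only reduce its cardinality.

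For the second inequality, fix $\sigma$ and $x\in\closedball{c}{r}$. Taylor's formula (\cite{federer1969}*{1.10.4,3.1.10}, as in the proof of Lemma \ref{polynomial lemma}) gives
\begin{equation*}
\chi_{\alpha\sigma(\alpha)}(x)=\sum_{j=0}^k\frac{1}{j!}\langle (x-c)^j,D^j\chi_{\alpha\sigma(\alpha)}(c)\rangle,
\end{equation*}
so $\|\chi_{\alpha\sigma(\alpha)}(x)\|\leq\sum_j r^j\|D^j\chi_{\alpha\sigma(\alpha)}(c)\|_{\odot^j}$. Summing over $\alpha$ bounds $\fmetric(\up{P}_{a,\Phi}(x),\up{P}_{b,\Psi}(x))$ by the $\sigma$-sum. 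Taking the infimum over $\sigma$ and then the supremum over $x$ concludes the proof.
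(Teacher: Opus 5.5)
Your proposal is correct and follows the paper's proof essentially verbatim. It uses the same family of $W^q$-valued permuted differences indexed by $\perm{q}$ with the $\ell^1$-norm on $W^q$, the same application of Lemma \ref{polynomial lemma} with $N=q!$ (giving $\Gamma_3=q\,\Gamma_2(m,k,q!)$), and Taylor's formula for the reverse inequality. You apply Taylor's formula componentwise rather than in $W^q$, which is an immaterial difference, and the density hypothesis with $\Gamma_3^{-1}\leq\Gamma_2^{-1}$ supplies the one required by Lemma \ref{polynomial lemma}, a step both you and the paper leave implicit.
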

\begin{proof}
Let $\Gamma_3=q\Gamma_2$ where $\Gamma_2=\Gamma_2(m,k,q!)$ is given by Lemma \ref{polynomial lemma}.
We begin by endowing $W^q$ with the $L^1$-norm on $W^q$ induced by $\|\cdot\|_W$, that is,
\begin{equation*}
\|(w_\alpha)_{\alpha=1,\ldots,q}\|_{W^q}=\sum_{\alpha=1}^q\|w_\alpha\|_W.
\end{equation*}
Given $(\zeta_\alpha)_{\alpha=1,\ldots,q}\in\odot^i(V,W)^q$ we may identify $\zeta\in\odot^i(V,W^q)$ defined as
\begin{equation*}
\zeta(x)=(\zeta_\alpha(x))_{\alpha=1,\ldots,q},
\end{equation*}
The corresponding induced norms $\|\cdot\|_{\odot^i(V,W^q)}$ and $\|\cdot\|_{\odot^i(V,W)}$ are related by
\begin{equation*}
\|\zeta\|_{\odot^i(V,W^q)}\leq\sum_{\alpha=1}^q\|\zeta_\alpha\|_{\odot^i(V,W)}\leq q\|\zeta\|_{\odot^i(V,W^q)}.
\end{equation*}
Now, for each $\sigma\in\Pi_q$ we use this identification to define the polynomial function $\xi_\sigma\in\polyspace{k}{V}{W^q}$ as 
\begin{equation*}
\xi_\sigma(x)=(\phi_\alpha(x-a)-\psi_{\sigma(\alpha)}(x-b))_{\alpha=1,\ldots,q}
\end{equation*}
and a subset of polynomial functions
\begin{equation*}
\mathcal{U}=\{\xi_\sigma:\sigma\in\Pi_q\}\subset\polyspace{k}{V}{W^q}.
\end{equation*}
It follows that
\begin{equation*}
\begin{aligned}
&\inf\{\|\xi(x)\|_{W^q}:\xi\in{\mathcal{U}}\}  =\fmetric(\up{P}_{a,\Phi}(x),\up{P}_{b,\Psi}(x))\text{ and }\\
&\sum_{\alpha=1}^q\|D^j{{\phi_{\alpha}}}(c-a)-D^j{{\psi_{\sigma(\alpha)}}}(c-b)\|_{\odot^j(V,W)}\leq q\|D^j\xi_\sigma(c)\|_{\odot^j(V,W^q)}
\end{aligned}
\end{equation*}
for each $j=0,\ldots,k$.
Hence, the first inequality follows directly by applying Lemma \ref{polynomial lemma} to $\mathcal{U}$.

To prove the second inequality take $\bar{\sigma}\in\Pi_q$ so that
\begin{equation*}
\begin{aligned}
 \inf\{\sum_{\alpha=1}^q\sum_{j=0}^k r^j\|D^j{{\phi_{\alpha}}}(c-a)- & D^j{{\psi_{\sigma(\alpha)}}}(c-b)\|_{\odot^j(V,W)}:\sigma\in\Pi_q\}  =\\
&\sum_{\alpha=1}^q\sum_{j=0}^k r^j\|D^j{{\phi_{\alpha}}}(c-a)-D^j{{\psi_{\bar\sigma(\alpha)}}}(c-b)\|_{\odot^j(V,W)}.
\end{aligned}
\end{equation*}
Then, for every $x\in\closedball{c}{r}$ we have
\begin{equation*}
\begin{aligned}
\inf\{\|\xi(x)\|_{W^q}:\xi\in\mathcal{U}\} & \leq \|{\xi_{\bar{\sigma}}}(x)\|_{W^q}\\
                           & = \left\|\sum_{j=0}^k\langle\frac{(x-c)^j}{j!},D^j{{\xi_{\bar{\sigma}}}}(c)\rangle\right\|_{W^q}\\
													 & \leq \sum_{j=0}^k\frac{r^j}{j!}\|D^j{{\xi_{\bar{\sigma}}}}(c)\|_{\odot^j(V,W^q)}\\
													 & \leq \sum_{j=0}^kr^j\|D^j{{\xi_{\bar{\sigma}}}}(c)\|_{\odot^j(V,W^q)}\\
													 & \leq \sum_{\alpha=1}^q\sum_{j=0}^k r^j\|D^j{{\phi_{\alpha}}}(c-a)-D^j{{\psi_{\bar\sigma(\alpha)}}}(c-b)\|_{\odot^j(V,W)}\\
													  = \inf&\{\sum_{\alpha=1}^q\sum_{j=0}^k r^j\|D^j{{\phi_{\alpha}}}(0)-D^j{{\psi_{\sigma(\alpha)}}}(a-b)\|_{\odot^j(V,W)}:\sigma\in\Pi_q\},
\end{aligned}
\end{equation*}
where in the second line we used the Taylor formula \cite{federer1969}*{1.10.4(p.46),3.1.11(p.221)}.
\end{proof}

\section{Differentiable Selections for multiple-valued functions on $\real$.}

In this section we will prove that affinely approximable functions always admit a differentiable selection when the domain is one-dimensional.

\begin{definition}
Let $n,q\in\pinteger$, $a,b\in\real$ with $a<b$ and $f:[a,b]\rightarrow\qspace{q}{\real^n}$ be a multiple-valued function.
We say that $f$ is \textit{affinely approximable at $a$} if there exists an affine function $F_a:\real\rightarrow\qspace{q}{\real^n}$ such that
\begin{equation*}
\lim_{x\rightarrow a^+}\|x-a\|^{-1}\fmetric(f(x),F_a(x))=0.
\end{equation*}
Analogously for affinely approximable at $b$.

If $A\subset \real$ is an arbitrary set, $I\subset A$ is an interval and $a\in I$, then we say that $f:A\rightarrow\qspace{q}{\real^n}$ is \textit{affinely approximable at $a$ relative to $I$} when either $a$ belongs to the interior of $I$ and $f$ is affinely approximable at $a$ as defined in Definition \ref{definition of affinely approximable 1} or when $a$ is an endpoint of $I$ and $f$ is affinely approximable at $a$ as defined above.
\end{definition}

The following three Lemmas are meant as building blocks to construct a differentiable selection on a closed interval once we have a local differentiable selection near every point.
Firstly we prove that two differentiable selections may be patched together.
Then we prove that local selections on a half open interval extend to a global selection.
Finally we show that the same holds on closed intervals.

\begin{lemma}\label{concatenation of selections}
Let $n,q\in\pinteger$, $a,b\in\real$ with $a<b$ and $f:[a,b]\rightarrow\qspace{q}{\real^n}$ be affinely approximable on $[a,b]$.
If $c\in(a,b)$, $g_1:[a,c]\rightarrow (\real^n)^q$ and $g_2:[c,b]\rightarrow (\real^n)^q$ are differentiable selections for $f|_{[a,c]}$ and $f|_{[c,b]}$ respectively, then there exists $\sigma\in\perm{q}$ such that $h:[a,b]\rightarrow(\real^n)^q$ defined as
\begin{equation*}
h_\alpha(x)=\left\{
\begin{aligned}
(g_1)_\alpha(x) &, \text{ if }x\in[a,c]\\
(g_2)_{\sigma(\alpha)}(x) &, \text{ if }x\in[c,b],
\end{aligned}
\right.
\end{equation*}
for each $\alpha=1,\ldots,q$ is a differentiable selection for $f$.
\end{lemma}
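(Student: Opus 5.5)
Everything reduces to choosing $\sigma$ at the junction $c$. For any permutation $\sigma$ with $(g_1)_\alpha(c)=(g_2)_{\sigma(\alpha)}(c)$, the function $h$ is a well-defined selection of $f$. On $[a,c)$ and $(c,b]$ it is differentiable because $g_1$ and $g_2$ are. At $c$ it is continuous, and it has one-sided derivatives $D^-(g_1)_\alpha(c)$ and $D^+(g_2)_{\sigma(\alpha)}(c)$. So it suffices to find $\sigma$ that matches the values at $c$ and also satisfies $D^-(g_1)_\alpha(c)=D^+(g_2)_{\sigma(\alpha)}(c)$ for every $\alpha$.

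\textbf{Step 1: two expressions for the Taylor polynomial at $c$.} Since $c\in(a,b)$ is interior, $f$ is affinely approximable at $c$ with two-sided Taylor polynomial $\pfa{c}{f}$ and jet $\tfa{f}(c)=\sum_\beta\di{(w_\beta,L_\beta)}$. Because $g_1$ has left derivatives at $c$,
\begin{equation*}
\fmetric\Big(f(x),\sum_\alpha\di{(g_1)_\alpha(c)+(x-c)D^-(g_1)_\alpha(c)}\Big)=o(|x-c|)\quad\text{as }x\to c^-.
\end{equation*}
The analogous statement holds for $g_2$ as $x\to c^+$.

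\textbf{Step 2: compare on each side.} I apply Lemma \ref{multivalued combinatorial lemma} with $k=1$ and $m=1$ on the interval $\closedball{c-r/2}{r/2}\subset[c-r,c]$, taking $X$ to be that whole interval. The first polynomial is $\Phi=\sum_\alpha\di{((g_1)_\alpha(c),D^-(g_1)_\alpha(c))}$, centered at $c$, and the second is $\tfa{f}(c)$, also centered at $c$. By the triangle inequality their pointwise $\fmetric$-distance on $[c-r,c]$ is $o(r)$. The lemma then gives a permutation $\sigma_r$ with
\begin{equation*}
\sum_\alpha\Big(\|(g_1)_\alpha(c)-w_{\sigma_r(\alpha)}-\tfrac r2(\cdots)\|+\tfrac r2\|D^-(g_1)_\alpha(c)-L_{\sigma_r(\alpha)}\|\Big)=o(r).
\end{equation*}
Here the order-$0$ terms are evaluated at the center $c-r/2$, so I expand back to $c$ with Taylor's formula, paying an $r\cdot O(\text{linear difference})$ error that is absorbed. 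Since there are finitely many permutations, I pick one $\sigma_1$ that occurs for a sequence $r\to0$. Dividing by $r$ and letting $r\to0$ yields $(g_1)_\alpha(c)=w_{\sigma_1(\alpha)}$ and $D^-(g_1)_\alpha(c)=L_{\sigma_1(\alpha)}$. In other words, $\sum_\alpha\di{((g_1)_\alpha(c),D^-(g_1)_\alpha(c))}=\tfa{f}(c)$ as elements of $\qspace{q}{\polyspace{1}{\real}{\real^n}}$.

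The same argument on $[c,c+r]$ gives $\sum_\alpha\di{((g_2)_\alpha(c),D^+(g_2)_\alpha(c))}=\tfa{f}(c)$. The two $Q$-points of pairs are therefore equal, so some $\sigma\in\perm{q}$ matches both coordinates simultaneously. This is exactly the requirement identified at the start.

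\textbf{Main obstacle.} The hard part is extracting the equality of jets as unordered pairs rather than only matching values and derivatives separately. The introduction's example shows that separate matching is not enough, and this is the reason for invoking Lemma \ref{multivalued combinatorial lemma} on a full one-sided interval instead of a pointwise limit.

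A more elementary alternative is available because the domain is one-dimensional. Along a sequence $x\to c^-$ I fix a single optimal permutation between the $g_1$-expansion and $\pfa{c}{f}(x)$. Since both are affine in $x$, the $o(|x-c|)$ bound on a sequence tending to $c$ forces the constant parts and the slopes to agree under that one permutation. I would present this version if the bookkeeping with the lemma becomes heavy.
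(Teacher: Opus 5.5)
Your proposal is correct and follows the paper's route. The paper likewise identifies both one-sided jets $\sum_{\alpha}\di{((g_1)_\alpha(c),D^-(g_1)_\alpha(c))}$ and $\sum_{\alpha}\di{((g_2)_\alpha(c),D^+(g_2)_\alpha(c))}$ with the jet of $f$ at $c$, and reads off a single $\sigma$ matching values and derivatives at once; it simply asserts this identification, whereas you justify it. Your elementary alternative already proves the identification completely: fix one optimal permutation along a sequence $x\to c^{\mp}$ and use the $o(|x-c|)$ bound between affine maps with the same center $c$. So the detour through Lemma \ref{multivalued combinatorial lemma} is not needed, and the ``obstacle'' from the introduction does not really arise here.
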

\begin{proof}
Since $g_1$ and $g_2$ are differentiable at $c$, then we have
\begin{equation*}
\sum_{\alpha=1}^q\di{((g_1)_\alpha(c),D^-(g_1)_\alpha(c))}=Af(c)=\sum_{\alpha=1}^q\di{((g_2)_\alpha(c),D^+(g_2)_\alpha(c))}.
\end{equation*}
In particular, there exists $\sigma\in\perm{q}$ such that $(g_1)_\alpha(c)=(g_2)_{\sigma(\alpha)(c)}$ and $D^-(g_1)_\alpha(c)=D^+(g_2)_{\sigma(\alpha)(c)}$ for all $\alpha=1,\ldots,q$, which proves that $h$ is differentiable at $c$.
\end{proof}

\begin{lemma}\label{local to global selection}
Let $n,q\in\pinteger$, $a,b\in\real$ with $a<b$ and $f:[a,b]\rightarrow\qspace{q}{\real^n}$ be affinely approximable on $[a,b)$.
Suppose that for all $x\in[a,b)$ there exists $\varepsilon>0$ and $g:[a,b)\cap\openball{x}{\varepsilon}\rightarrow (\real^n)^q$ a differentiable selection for $f|_{[a,b)\cap\openball{a}{\varepsilon}}$.
Then, there exists $h:[a,b):\rightarrow(\real^n)^q$ a differentiable selection for $f|_{[a,b)}$.
\end{lemma}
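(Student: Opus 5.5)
The plan is a continuation (supremum) argument that patches local selections together with Lemma \ref{concatenation of selections}. Note that the hypothesis as printed has a typo: the local selection should be for $f|_{[a,b)\cap\openball{x}{\varepsilon}}$, and I will read it that way. Define
\begin{equation*}
S=\{t\in(a,b):\text{there exists a differentiable selection for }f|_{[a,t]}\}.
\end{equation*}
Applying the hypothesis at $x=a$ gives a selection on $[a,a+\varepsilon)$. Restricting it shows $S\supset(a,a+\varepsilon)$, so $S\neq\emptyset$. Moreover $S$ is an interval starting at $a$, since a selection on $[a,t]$ restricts to a selection on $[a,t']$ for $t'<t$.

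\textbf{Step 1: $S=(a,b)$.} Let $t^*=\sup S$ and suppose $t^*<b$. Apply the hypothesis at $x=t^*$ to get $\varepsilon>0$ and a differentiable selection $g$ on $(t^*-\varepsilon,t^*+\varepsilon)\cap[a,b)$. Pick $c\in S$ with $c>t^*-\varepsilon$ and $c>a$, and pick $t\in(t^*,\min\{t^*+\varepsilon,b\})$. There is a selection $g_1$ on $[a,c]$, and $g$ restricts to a selection $g_2$ on $[c,t]$. Since $f$ is affinely approximable on $[a,t]$ and $c\in(a,t)$, Lemma \ref{concatenation of selections} (applied to $f|_{[a,t]}$) yields a differentiable selection on $[a,t]$. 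Hence $t\in S$ with $t>t^*$, a contradiction.

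\textbf{Step 2: a single global selection.} Step 1 only gives selections on each $[a,t]$, and these need not agree with one another, so they must be glued coherently. Choose an increasing sequence $t_k\uparrow b$ with $t_0\in(a,b)$, and build $h$ inductively. Let $h^{(0)}$ be a selection on $[a,t_0]$. Given $h^{(k)}$ on $[a,t_k]$, take any differentiable selection $\tilde g$ on $[a,t_{k+1}]$, which exists by Step 1, and restrict it to $g_2=\tilde g|_{[t_k,t_{k+1}]}$. Lemma \ref{concatenation of selections}, applied on $[a,t_{k+1}]$ with $c=t_k$, $g_1=h^{(k)}$ and $g_2$, gives a permutation $\sigma_k$. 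Define $h^{(k+1)}$ to equal $h^{(k)}$ on $[a,t_k]$ and $(g_2)_{\sigma_k(\cdot)}$ on $[t_k,t_{k+1}]$; it is differentiable on $[a,t_{k+1}]$. By construction $h^{(k+1)}$ extends $h^{(k)}$, so $h=\bigcup_k h^{(k)}$ is well defined on $[a,b)$. Each point of $[a,b)$ lies in the interior (relative to $[a,b)$) of some $[a,t_{k+1}]$, so $h$ is differentiable there, including one-sided at $a$.

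The main subtlety is Step 2. Naively taking a union of the selections produced in Step 1 fails because they are unrelated; the fix is that the concatenation lemma modifies only the new piece and leaves $h^{(k)}$ untouched. A second point to check is that Lemma \ref{concatenation of selections} applies with endpoint one-sided derivatives. Differentiability of $g_1$ and $g_2$ at $c$ is only one-sided, but $Af(c)$ is two-sided because $c$ is interior, so the matching argument in its proof goes through.
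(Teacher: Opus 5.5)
Your proof is correct and follows the same continuation argument as the paper: take the supremum of the good parameters and extend past it with Lemma \ref{concatenation of selections}. Your choice of $c\in S$ close to $t^*$ and your Step 2 also repair three points the paper's proof glosses over: it tacitly assumes a selection on all of $[a,t_0)$ at the supremum, it takes $c\in(a,t_0-\varepsilon)$, which lies outside the domain of $G_2$, and it never passes from $t_0=b$ to a single selection on $[a,b)$.
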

\begin{proof}
We begin by defining
\begin{equation*}
t_0=\sup\{t> a: \text{ there exists a differentiable selection for }f|_{[a,t)}\}.
\end{equation*}
It follows from the hypothesis that $t_0>a$.
If $t_0<b$, then we can find $\varepsilon>0$ with $t_0+\varepsilon<b$ and $t_0-\varepsilon>a$, $G_1:[a,t_0)\rightarrow(\real^n)^q$ a differentiable selection on $[a,t_0)$ and $G_2:(t_0-\varepsilon,t_0+\varepsilon)\rightarrow(\real^n)^q$ a differentiable selection on $(t_0-\varepsilon,t_0+\varepsilon)$.

Now we pick $c\in(a,t_0-\varepsilon)$ and $t_1\in(t_0,t_0+\varepsilon)$ and apply Lemma \ref{concatenation of selections} to $g_1=G_1|_{[a,c]}$ and $g_2=G_2|_{[c,t_1]}$ to produce a differentiable selection on $[a,t_1]$, which contradicts the choice of $t_0$.
\end{proof}

\begin{lemma}\label{local to global on closed interval}
Let $n,q\in\pinteger$, $I\subset\real$ be an interval and $f:I\rightarrow\qspace{q}{\real^n}$ be affinely approximable on $I$.
Suppose that for all $x\in I$ there exists an interval $J\subset I$ open relative to $I$ with $x\in J$ and $g:J\rightarrow (\real^n)^q$ a differentiable selection for $f|_J$.
Then, there exists $h:I\rightarrow(\real^n)^q$ a differentiable selection for $f$.
\end{lemma}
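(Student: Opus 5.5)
The plan is to reduce to the half-open case handled by Lemma \ref{local to global selection}, together with its mirror image, and then glue with Lemma \ref{concatenation of selections}.

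First assume $I=[a,b]$ is compact with $a<b$; the degenerate case $a=b$ is trivial. By hypothesis, $b$ has a relatively open interval $J_b=(b-\varepsilon,b]\cap I$ carrying a differentiable selection $g_b$. Restrict $f$ to $[a,b')$ for some $b'\in(b-\varepsilon,b)$. On this half-open interval every point has a local differentiable selection, obtained by restricting the given ones. Lemma \ref{local to global selection} then yields a differentiable selection $h_1$ on $[a,b')$. Pick $c\in(b-\varepsilon,b')$, so that $h_1|_{[a,c]}$ and $g_b|_{[c,b]}$ are differentiable selections on closed intervals meeting at the interior point $c$. Lemma \ref{concatenation of selections} now produces a permutation $\sigma$ and a differentiable selection on all of $[a,b]$.

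For a general interval $I$, I would handle the endpoints separately. If $I$ has a left endpoint $a\in I$ but is unbounded or open on the right, exhaust $I$ by compact intervals $[a,b_k]$ with $b_k$ increasing to $\sup I$. Construct the selections inductively: on $[a,b_1]$ use the compact case. Given a selection $h_k$ on $[a,b_k]$, take any selection $\tilde h$ on $[b_k-\delta,b_{k+1}]$ from the compact case and concatenate it with $h_k|_{[a,b_k-\delta/2]}$ at the point $b_k-\delta/2$ via Lemma \ref{concatenation of selections}. The result agrees with $h_k$ on $[a,b_k-\delta/2]$. Choose the $\delta$'s so that the points $b_k-\delta_k/2$ still increase to $\sup I$. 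Then the limit $h$ is well defined, since every point is eventually frozen, and it is differentiable because near each point it coincides with some $h_k$. The same argument applied leftward handles an open or unbounded left end. An open interval $(a,b)$ is treated by fixing an interior point $x_0$, doing the rightward construction on $[x_0,b)$ and the leftward one on $(a,x_0]$, and gluing at $x_0$ with Lemma \ref{concatenation of selections}.

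The main obstacle is bookkeeping rather than analysis. Lemma \ref{local to global selection} is stated on $[a,b)$ with $f$ defined on $[a,b]$, and its hypothesis contains a typo ($\openball{a}{\varepsilon}$ for $\openball{x}{\varepsilon}$). Lemma \ref{concatenation of selections} requires the junction point to lie strictly inside the interval and the two pieces to be on closed intervals. Both points need checking in the compact case. In the infinite induction, the requirement that modifications stabilize on compact sets is handled by freezing the selection on $[a,b_k-\delta_k/2]$ at each step.
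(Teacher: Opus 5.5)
Your proof is correct. On a compact interval it is the paper's argument with a trivial variation. The paper applies Lemma~\ref{local to global selection} to $f|_{[a,b)}$ and to the mirror image $f|_{(a,b]}$ and glues at some $c\in(a,b)$ with Lemma~\ref{concatenation of selections}. You apply Lemma~\ref{local to global selection} once, on $[a,b')$, and glue with the local selection already given near $b$; just take $\varepsilon<b-a$ so that $c>a$. Your open-interval case, gluing $(a,x_0]$ to $[x_0,b)$ at an interior point, is also what the paper's ``similarly'' means.

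The real difference is the half-open and unbounded case. The paper calls it ``already proved'' by Lemma~\ref{local to global selection}. But that lemma is stated only for $a,b\in\real$. More importantly, its sup argument only shows $t_0=b$, i.e.\ that $f|_{[a,t)}$ has a differentiable selection for each $t<b$. It never assembles a single selection on all of $[a,b)$. Your exhaustion supplies exactly this missing step, and it handles unbounded intervals at no extra cost: each concatenation leaves the previous selection unchanged on an initial segment, and the right endpoints of these segments increase to $\sup I$. So your route is a bit longer but more complete than the paper's.

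Two simplifications are available. First, the shift by $\delta_k/2$ is unnecessary. Lemma~\ref{concatenation of selections} already glues $h_k$ on $[a,b_k]$ to any selection on $[b_k,b_{k+1}]$ at the interior point $c=b_k$, which freezes $h_k$ on all of $[a,b_k]$. Second, gluing $(a,x_0]$ to $[x_0,b)$ falls outside the literal closed-interval statement of Lemma~\ref{concatenation of selections}. However, its proof only uses the one-sided jets at $c$, so it applies unchanged.
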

\begin{proof}
If $I=[a,b]$ with $a<b$ is a closed interval the we apply Lemma \ref{local to global selection} to $f|_{[a,b)}$ and $f|_{(a,b]}$, choose a point $c\in(a,b)$ and apply Lemma \ref{concatenation of selections} to obtain the result.
Similarly if $I=(a,b)$ is an open interval.
When $I=[a,b)$ or $I=(a,b]$ is a half-closed interval, then it is already proved on Lemma \ref{local to global selection}
\end{proof}

\begin{remark}
Observe that in the three Lemmas above we use strongly that the domain is one-dimensional.
Although one could conceive of a higher dimensional version of Lemma \ref{concatenation of selections}, the one-dimensional condition is particularly unavoidable in the proof of Lemma \ref{local to global selection}.
\end{remark}

\begin{remark}
If we start with a multiple-valued function $f$ satisfying $f(a)=f(b)$ it is not possible to guarantee that the selection $g$ constructed above will also have $g(a)=g(b)$.
That is, the construction does not extend to functions defined on $S^1$.
\end{remark}

The following Lemma is a version of the intermediate value property for derivatives of real valued functions, which we will use to derive a weaker version for $\real^n$-valued functions afterwards.

\begin{lemma}\label{intermediate value property}
If $a,r\in\real$, $r>0$ and $f:[a,a+r)\rightarrow\real$ is differentiable on $[a,a+r)$ and
\begin{equation*}
Y=\bigcap_{\varepsilon>0}\textup{clos}\{Df(x):a<x<a+\varepsilon\},
\end{equation*}
then $D^+f(a)\in Y$ and $Y$ is connected.
\end{lemma}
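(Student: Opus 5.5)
The proof has two parts. Connectedness of $Y$ comes from Darboux's theorem, and $D^+f(a)\in Y$ comes from the mean value theorem. Set $K_\varepsilon=\textup{clos}\{Df(x):a<x<a+\varepsilon\}$ for $0<\varepsilon<r$. These sets decrease as $\varepsilon\downarrow 0$, and $Y=\bigcap_\varepsilon K_\varepsilon$.

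\textbf{Step 1: each $K_\varepsilon$ is connected.} The function $f$ is differentiable on the open interval $(a,a+\varepsilon)$, so Darboux's theorem applies there. For any $a<x<y<a+\varepsilon$, $Df$ attains every value between $Df(x)$ and $Df(y)$ on $[x,y]$. Hence $\{Df(x):a<x<a+\varepsilon\}$ is an interval in $\real$, and so its closure $K_\varepsilon$ is a closed interval, possibly unbounded.

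\textbf{Step 2: $D^+f(a)\in Y$.} Fix $\varepsilon$ and take $t\in(a,a+\varepsilon)$. The mean value theorem on $[a,t]$ needs continuity of $f$ on $[a,t]$, which holds because $f$ is right differentiable at $a$. It gives $\xi_t\in(a,t)$ with
\begin{equation*}
\frac{f(t)-f(a)}{t-a}=Df(\xi_t)\in K_\varepsilon .
\end{equation*}
Letting $t\to a^+$, the left side tends to $D^+f(a)$. Since $K_\varepsilon$ is closed, $D^+f(a)\in K_\varepsilon$. This holds for every $\varepsilon$, so $D^+f(a)\in Y$. In particular $Y\neq\emptyset$, which is needed for the next step.

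\textbf{Step 3: $Y$ is connected.} This is the step to handle carefully, because a decreasing intersection of noncompact connected sets need not be connected in general. In $\real$ it works: each $K_\varepsilon$ is an interval containing the common point $D^+f(a)$. For $u<v$ in $Y$ and $w\in(u,v)$, each interval $K_\varepsilon$ contains both $u$ and $v$, hence contains $w$. So $w\in Y$, and $Y$ is an interval, hence connected.
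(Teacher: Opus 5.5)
Your proof is correct and is essentially the paper's argument: the mean value theorem gives $D^+f(a)\in Y$, and connectedness comes from the intermediate value property of $Df$. You invoke that property as Darboux's theorem to make each $\textup{clos}\{Df(x):a<x<a+\varepsilon\}$ an interval and then intersect, whereas the paper fixes $z$ between two points of $Y$ and reproves Darboux inline by minimizing $f(x)-zx$ on $[x_1,x_2]$; as a minor point, nonemptiness of $Y$ is not needed in your Step 3, since any intersection of intervals in $\real$ is convex.
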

\begin{proof}
By the Mean Value Theorem we have that for all $\varepsilon>\delta>0$ there exists $c_\delta\in(a,a+\delta)$ such that 
\begin{equation*}
Df(c_\delta)=\delta^{-1}(f(a+\delta)-f(a)).
\end{equation*}
By letting $\delta\rightarrow 0^+$ we see that $D^+f(a)\in \textup{clos}\{Df(x):a<x<a+\varepsilon\}$ for all $\varepsilon>0$.
Hence $D^+f(a)\in Y$.

Now, if $Y$ consists of a single point then it is trivially connected.
Suppose that $\cardinality Y\geq 2$.
There exists $y_1,y_2\in Y$ and $z\in\real$ such that $y_1<z<y_2$.
Let $\varepsilon>0$ be arbitrary and put $\delta=\min\{\frac{|z-y_1|}{2},\frac{|z-y_2|}{2}\}$.
Since $y_1,y_2\in Y$ we can find $x_1,x_2\in(a,a+\varepsilon)$ with
\begin{equation*}
|y_i-Df(x_i)|<\delta \text{ for each }i=1,2
\end{equation*}
and in particular $Df(x_1)<z<Df(x_2)$.
Without loss of generality we may assume that $x_1<x_2$ and define $g:[x_1,x_2]\rightarrow\real$ as $g(x)=f(x)-zx$.
It follows that $g$ is continuous and therefore it attains a minimum in $[x_1,x_2]$ denoted by $g(x_0)$.

If $x_0=x_1$, then $D^+g(x_0)\geq 0$ and $Df(x_1)\geq z$, which is a contradiction.
Similarly for $x_0=x_2$.
We conclude that $x_0\in(x_1,x_2)$ and $Dg(x_0)=0$.
That is, $Df(x_0)=z$ and $z\in\{Df(x):a<x<a+\varepsilon\}$.
Since $\varepsilon$ was arbitrary we conclude that $z\in Y$, which implies that $Y$ is connected.
\end{proof}

\begin{lemma}\label{darboux lemma}
Let $N\in\pinteger$, $a,r\in\real$, $r>0$ and $f:[a,a+r)\rightarrow\real^N$ be differentiable on $[a,a+r)$.
If $Z\subset \real$ is a finite set satisfying
\begin{equation*}
\lim_{\varepsilon\rightarrow 0^+}\min\{|\langle e_i, Df(a+\varepsilon)\rangle-z|:z\in Z\}=0,
\end{equation*}
for all $i=1,\ldots,N$, where $e_i$ are the coordinate vectors in $\real^N$, then
\begin{equation*}
\lim_{\varepsilon\rightarrow 0^+}Df(a+\varepsilon)=D^+f(a).
\end{equation*}
\end{lemma}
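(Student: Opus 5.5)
The plan is to reduce to one coordinate at a time and apply Lemma \ref{intermediate value property} to each. Fix $i\in\{1,\ldots,N\}$ and put $f_i=\langle e_i,f\rangle:[a,a+r)\rightarrow\real$, which is differentiable on $[a,a+r)$ with $Df_i(x)=\langle e_i,Df(x)\rangle$ and $D^+f_i(a)=\langle e_i,D^+f(a)\rangle$. Let
\begin{equation*}
Y_i=\bigcap_{\varepsilon>0}\textup{clos}\{Df_i(x):a<x<a+\varepsilon\}.
\end{equation*}
By Lemma \ref{intermediate value property}, $D^+f_i(a)\in Y_i$ and $Y_i$ is connected.

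Next we show that $Y_i\subset Z$. Take $y\in Y_i$. For every $\varepsilon>0$ there is $x\in(a,a+\varepsilon)$ with $Df_i(x)$ arbitrarily close to $y$. The hypothesis says that $\min\{|Df_i(a+t)-z|:z\in Z\}\rightarrow 0$ as $t\rightarrow 0^+$. Hence $d(y,Z)$ is smaller than any positive number, so $d(y,Z)=0$, and since $Z$ is finite (hence closed) we get $y\in Z$. Now $Y_i$ is a connected subset of the finite set $Z$ and is nonempty because it contains $D^+f_i(a)$. It is therefore a single point, and $Y_i=\{D^+f_i(a)\}$.

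It remains to turn ``the cluster set is a single point'' into convergence. Boundedness is not automatic, but $Z$ is finite, so there is $\delta>0$ such that $Df_i(a+t)$ lies within distance $1$ of $Z$ for $0<t<\delta$. This keeps $Df_i$ bounded near $a^+$. A bounded real function whose only cluster value as $t\rightarrow 0^+$ is $D^+f_i(a)$ converges to it: otherwise some sequence $t_k\rightarrow 0^+$ would keep $Df_i(a+t_k)$ at distance at least $\eta>0$ from $D^+f_i(a)$, and by compactness a subsequence would converge to a point of $Y_i$ different from $D^+f_i(a)$, a contradiction. Thus $Df_i(a+\varepsilon)\rightarrow D^+f_i(a)$ for each $i$, and since convergence in $\real^N$ is coordinatewise, $Df(a+\varepsilon)\rightarrow D^+f(a)$.

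The only step needing care is the inclusion $Y_i\subset Z$ together with the boundedness, which both come from the finiteness of $Z$. The rest is a direct application of Lemma \ref{intermediate value property}.
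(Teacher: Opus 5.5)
Your proof is correct and follows the paper's argument: pass to the coordinates $f_i$, deduce $Y_i\subset Z$ from the hypothesis, and use Lemma~\ref{intermediate value property} to force the nonempty connected set $Y_i\subset Z$ to be the singleton $\{D^+f_i(a)\}$. You also spell out the step from ``$Y_i$ is a singleton'' to actual convergence, via boundedness of $Df_i$ near $a^+$ (which again comes from finiteness of $Z$) and a compactness argument; the paper leaves this step implicit.
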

\begin{proof}
For each $i=1,\ldots,N$ we put $f_i(x)=\langle e_i,f(x)\rangle$ and note that the condition on $Z$ implies trivially that 
\begin{equation*}
Y_i=\bigcap_{\varepsilon>0}\textup{clos}\{Df_i(x):a<x<a+\varepsilon\}\subset Z.
\end{equation*}
It follows from Lemma \ref{intermediate value property} that $Y_i$ is connected and contains $D^+f_i(a)$.
Since $Z$ is finite we conclude that $Y_i=\{D^+f_i(a)\}$ for each $i=1,\ldots, N$, which concludes the proof.
%
\end{proof}

We use this Lemma to prove that if a multiple-valued function has continuous derivative $Df$, then any differentiable selection must in fact be of class $1$.

\begin{proposition}\label{improvement from differentiable to class 1}
Let $n,q\in\pinteger$, $I\subset\real$ be an interval, $f:I\rightarrow\qspace{q}{\real^n}$ be a multiple-valued function and $g:I\rightarrow(\real^n)^q$ be a differentiable selection for $f$ on $I$.
If $Df$ is continuous then $g$ is of class $1$.
\end{proposition}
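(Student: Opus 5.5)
We will prove continuity of each derivative $Dg_\alpha$ at every point $x_0\in I$ by combining Lemma \ref{differentiability is preserved} with the Darboux-type Lemma \ref{darboux lemma}, treating right-hand and left-hand limits separately (at an endpoint of $I$ only one side is relevant, and the left side follows from the right by the reflection $x\mapsto -x$).

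\textbf{Step 1: a finite target set.} Fix $x_0\in I$ not the right endpoint. By Lemma \ref{differentiability is preserved}, for every $x\in I$ we have
\begin{equation*}
\dfa{f}{x}=\sum_{\alpha=1}^q\di{Dg_\alpha(x)}.
\end{equation*}
Write $\dfa{f}{x_0}=\sum_{\beta=1}^q\di{L_\beta}$. Continuity of $Df$ at $x_0$ gives $\fmetric(\dfa{f}{x},\dfa{f}{x_0})\to 0$ as $x\to x_0$. For each $\alpha$ and each $x$ there is therefore some $\beta$ with
\begin{equation*}
\|Dg_\alpha(x)-L_\beta\|\leq\fmetric(\dfa{f}{x},\dfa{f}{x_0}).
\end{equation*}
Hence $Dg_\alpha(x)$ approaches the finite set $\{L_1,\ldots,L_q\}$. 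Identify $\homspace{\real}{\real^n}\cong\real^n$ and let $Z\subset\real$ be the finite set of all coordinates $\langle e_i,L_\beta\rangle$. Then for each $i$,
\begin{equation*}
\lim_{\varepsilon\to0^+}\min\{|\langle e_i,Dg_\alpha(x_0+\varepsilon)\rangle-z|:z\in Z\}=0.
\end{equation*}

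\textbf{Step 2: apply Lemma \ref{darboux lemma}.} Take $r>0$ with $[x_0,x_0+r)\subset I$. Since $g_\alpha$ is differentiable on this interval, Lemma \ref{darboux lemma} with $N=n$ yields $\lim_{\varepsilon\to0^+}Dg_\alpha(x_0+\varepsilon)=D^+g_\alpha(x_0)=Dg_\alpha(x_0)$. Applying the same argument to $x\mapsto g_\alpha(-x)$ on $(-x_0-r,-x_0]$ gives the left limit. Together these show that $Dg_\alpha$ is continuous at $x_0$ for every $\alpha$, so $g$ is of class $1$.

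\textbf{Main obstacle.} The key point is Step 1: we must make sure that the coordinatewise hypothesis of Lemma \ref{darboux lemma} is met even though the matching index $\beta$ may change with $x$. This is why we use the finite set $Z$ of all coordinates of all $L_\beta$, which the lemma permits. No other subtlety is expected, since the interior-versus-endpoint issue is handled by one-sided derivatives throughout.
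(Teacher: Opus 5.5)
Your proposal is correct and follows essentially the same route as the paper: Lemma~\ref{differentiability is preserved} shows that $(Dg_\alpha)_\alpha$ is a selection for $Df$, continuity of $Df$ puts the coordinates of each $Dg_\alpha(x_0\pm\varepsilon)$ near the finite set $Z$ of coordinates of $Df(x_0)$, and Lemma~\ref{darboux lemma} applied on each side yields continuity. You are slightly more explicit than the paper in two places: you derive the approach to $Z$ from an optimal permutation, and you obtain the left-hand limit by reflection, which the paper leaves implicit.
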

\begin{proof}
Fix $a\in I$ and for each $i\in\{1,\ldots,n\}$ denote by $e_i$ the canonical coordinate vector in $\real^n$.
For each $\alpha=1,\ldots,q$ we will abuse notation to identify $Dg_\alpha(a)\in\homspace{\real}{\real^n}$ with $Dg_\alpha(a)\in\homspace{\real^n}{\real}$ via the canonical inner product of Euclidean space.
That is,
\begin{equation*}
\langle v,Dg_\alpha(a)\rangle=\bflangle v,\langle 1, Dg_\alpha(a)\rangle \bfrangle_{\real^n}, \text{ for all }v\in\real^n.
\end{equation*}
We know from Lemma \ref{differentiability is preserved} that $Dg$ defines a selection for $Df$.
Since $Df$ is continuous, we have
\begin{equation*}
\lim_{x\rightarrow a,x\in I} \sum_{\alpha=1}^q\di{\langle e_i,Dg_\alpha(x)\rangle}=\sum_{\alpha=1}^q\di{\langle e_i,Dg_\alpha(a)\rangle}.
\end{equation*}
Therefore, if we define $Z=\{\langle e_i,Dg_\alpha(a)\rangle\in\real:\alpha=1,\ldots,q, i=1,\ldots,n\}$, we obtain that
\begin{equation*}
\lim_{\varepsilon\rightarrow 0^+}\min\{|\langle e_i, Dg_\alpha(a\pm\varepsilon)\rangle-z|:z\in Z\}=0
\end{equation*}
for each $\alpha=1,\ldots, q$ and $i=1,\ldots, n$.
If $a$ is an endpoint of $I$, it follows from Lemma \ref{darboux lemma} that
\begin{equation*}
\lim_{\varepsilon\rightarrow 0^+}Dg_\alpha(x\pm\varepsilon)=D^\pm g_\alpha(a).
\end{equation*}
That is, $Dg_\alpha$ is continuous at $a$ whether $a$ is a left or right endpoint of $I$.
If $a$ is an interior point, then we apply Lemma \ref{darboux lemma} to both sides to obtain
\begin{equation*}
\lim_{\varepsilon\rightarrow 0^+}Dg_\alpha(x-\varepsilon)=D^- g_\alpha(a)=D^+ g_\alpha(a)=\lim_{\varepsilon\rightarrow 0^+}Dg_\alpha(x+\varepsilon),
\end{equation*}
which concludes the proof.
\end{proof}

Unlike \cite{delellis-grisanti-tilli2004}*{Theorem 4.2} we are not making any assumption on the continuity of $Af$, only on $Df$.
However, we further assume that a differentiable selection is already given.

\begin{definition}\label{definition of splitting}
Let $n,q\in\pinteger$, $A\subset\real$, $f:A\rightarrow\qspace{q}{\real^n}$ be a multiple-valued function and $I\subset A$ be an interval.
We say that $f$ \textit{splits} on $I$ if there exists $k\in\pinteger$ with $k\geq 2$, $q_1,\ldots,q_k\in\pinteger$ with $q=q_1+\ldots+q_k$ and continuous functions $g_i:I\rightarrow\qspace{q_i}{\real^n}$ for each $i=1,\ldots,k$ such that
\begin{enumerate}[(i)]
\item $f(t)=\sum_{i=1}^kg_i(t)$ for all $t\in I$ and
\item for each $t\in I$, $\support{g_i(t)}\cap\support{g_j(t)}\neq\emptyset$ implies $i=j$.
\end{enumerate}

Given $a\in I$ we say that $g_1,\ldots,g_k$ is a \textit{pointed split} at $(I,a)$ if in addition $f$ is affinely approximable at $a$, each $g_i$ is affinely approximable at $a$ and condition $(ii)$ only holds for $t\in I\setminus\{a\}$.
\end{definition}

\begin{figure}[H]
\captionsetup{justification=centering}
\centering
\begin{subfigure}{.5\textwidth}
  \centering
  \begin{tikzpicture}[yscale=0.4,xscale=0.2]

\draw[red,thick] (-8,2) .. controls (-6,0.5) and (-2,0) .. (0,0) .. controls (2,0) and (6,0.5) .. (8,2);
\draw[red,thick] (-8,-2) .. controls (-6,-0.5) and (-2,0) .. (0,0) node (v1) {} .. controls (2,0) and (6,-0.5) .. (8,-2);

\draw[blue,thick] (-8,-4) .. controls (-6,-2) and (-2.5,-1) .. (0,-2.5) .. controls (2.5,-4) and (6,-6) .. (8,-4);

\draw[thick] (-8,-6) .. controls (-6,-6) and (-3,-6) .. (0,-8) .. controls (3,-10) and (6,-10) .. (8,-10);
\draw[thick] (-8,-10) .. controls (-6,-10) and (-3,-10) .. (0,-8) .. controls (3,-6) and (6,-6) .. (8,-6);

\draw[->, very thick] (-10,-12) -- (10,-12);
\draw[dashed] (-8,-12) -- (-8,3);
\draw[dashed] (8,-12) -- (8,3);

\draw[very thick] (0,-12.2) -- (0,-11.8);
\draw[dotted, thick] (0,0) -- (0,-12);

\node [label={[label distance = 0.1 cm]-90:$x$}] at (0,-12) {};

\node [label={[label distance = 0.4 cm]90:graph$(f)$, $q=5$}] at (0,2) {};
\node [label={[label distance = 0 cm]90:card supp$f(x)=3$}] at (0,2) {};

\node [label={[label distance = -0.35 cm]0:{\color{red}$g_1$, $q_1=2$}}] at (6,0) {};
\node [label={[label distance = -0.35 cm]0:{\color{blue}$g_2$, $q_2=1$}}] at (6,-3.7) {};
\node [label={[label distance = -0.35 cm]0:$g_3$, $q_3=2$}] at (6,-8) {};

\node [label={[label distance = -0.15 cm]90:{\color{red}$1$}}] at (4.5,0.5) {};
\node [label={[label distance = -0.15 cm]-90:{\color{red}$1$}}] at (4.5,-0.5) {};

\node [label={[label distance = -0.15 cm]-90:{\color{blue}$1$}}] at (4,-4.5) {};

\node [label={[label distance = -0.15 cm]-90:$1$}] at (5.5,-6) {};
\node [label={[label distance = -0.15 cm]90:$1$}] at (5.5,-10) {};

\node [label={[label distance = 2 cm]120:}] at (-8,0) {};
\end{tikzpicture}
  \caption*{Local splitting near $f(x)$\\ with disjoint support.}
  \label{fig:splitting 3}
\end{subfigure}%
\begin{subfigure}{.5\textwidth}
  \centering
  \begin{tikzpicture}[yscale=0.4,xscale=0.2]

\draw[red, thick] (-8,-2) .. controls (-7,-1) and (-2.5,-1.5) .. (0,0) .. controls (2,1.5) and (4,2) .. (8,2);
\draw[red, thick] (-8,-3) .. controls (-3,-3) and (-2.5,-1.5) .. (0,0) .. controls (2,1.5) and (7,1.5) .. (8,1);

\draw[thick] (-8,1.5) .. controls (-5.5,-0.5) and (-2,2) .. (0,0) .. controls (3,-2) and (4,0.5) .. (8,-3);
\draw[thick] (-8,1.5) .. controls (-5.5,-0.5) and (-2,2) .. (0,0) .. controls (3,-2) and (5,-3.5) .. (8,-1.5);

\draw[blue, thick] (-8,0) -- (8,0);

\node [label={[label distance = 0.8 cm]90:graph$(f)$, $q=5$}] at (0,5) {};
\node [label={[label distance = 0.4 cm]90:card supp $f(x)=1$}] at (0,5) {};
\node [label={[label distance = 0 cm]90:card supp $Df(x)=3$}] at (0,5) {};

\node [label={[label distance = 0 cm]10:{\color{red}$g_1$, $q_1=2$}}] at (7,1.5) {};
\node [label={[label distance = 0 cm]10:{\color{blue}$g_2$, $q_2=1$}}] at (7,-0.5) {};
\node [label={[label distance = 0 cm]30:{$g_3$, $q_3=2$}}] at (7,-2.5) {};

\node [label={[label distance = -0.1 cm]90:$2$}] at (-5.5,0.5) {};
\node [label={[label distance = -0.2 cm]-90:$1$}] at (4,-1) {};
\node [label={[label distance = -0.2 cm]-90:$1$}] at (4.5,-2.5) {};

\node [label={[label distance = -0.15 cm]-90:{\color{blue}$1$}}] at (6,0) {};

\node [label={[label distance = -0.15 cm]-90:{\color{red}$1$}}] at (6,1.5) {};
\node [label={[label distance = -0.15 cm]90:{\color{red}$1$}}] at (6,2) {};

\draw[->, very thick] (-10,-8) -- (10,-8);
\draw[dashed] (-8,-8) -- (-8,3);
\draw[dashed] (8,-8) -- (8,3);
\draw[very thick] (0,-8.2) -- (0,-7.8);
\draw[dotted, thick] (0,0) -- (0,-8);
\node [label={[label distance = 0.1 cm]-90:$x$}] at (0,-8) {};

\node [label={[label distance = 2 cm]120:}] at (-8,0) {};

\end{tikzpicture}
  \caption*{Local pointed splitting near $f(x)$\\ concentrating at a single point.}
  \label{fig:pointed splitting 3}
\end{subfigure}
\label{fig:test}
\end{figure}

We note that the above definition is similar to the notion of decomposition in \cite{delellis-espadaro2011}*{Definition 1.1} but we require stronger separation of the decomposing functions.
Although it is not clear whether continuity of $Df$ is preserved under splittings, we are nonetheless able to prove that affine approximability is preserved.

\begin{lemma}\label{splitting preserves affine approximation}
Let $n,q\in\pinteger$, $A\subset\real$, $f:A\rightarrow\qspace{q}{\real^n}$ be a multiple-valued function, $I\subset A$ be an interval and $a\in I$.
Suppose that $f$ splits on $I$ and the splitting is defined by the functions $g_1,\ldots,g_k$ for some $k\geq 2$.

If $f$ is affinely approximable at $a$ relative to $I$, then each $g_i$ is affinely approximable at $a$ relative to $I$ and
\begin{equation*}
\tfa{f}(a)=\sum_{i=1}^k \tfa{g_i}(a).
\end{equation*}
\end{lemma}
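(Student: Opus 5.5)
The plan is to use Lemma \ref{lemma for local fmetric} at the point $f(a)$ to split the affine approximation $\pfa{a}{f}$ into pieces matching the $g_i$. Since the $g_i$ are continuous and their supports at $t=a$ are pairwise disjoint, each value $g_i(a)$ determines the group of points of $\support{f(a)}$ carried by $g_i$. Write $\tfa{f}(a)=\sum_{\alpha=1}^q\di{(y_\alpha,L_\alpha)}$, so that $f(a)=\sum_\alpha\di{y_\alpha}$. For each $y\in\support{f(a)}$ there is a unique $i(y)$ with $y\in\support{g_{i(y)}(a)}$, and $\density{0}{g_{i(y)}(a)}{y}=\density{0}{f(a)}{y}$. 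Set $\mathcal{A}_i=\{\alpha: i(y_\alpha)=i\}$. Then $\sum_{\alpha\in\mathcal{A}_i}\di{y_\alpha}=g_i(a)$, and in particular $\cardinality{\mathcal{A}_i}=q_i$. The candidate approximation for $g_i$ is
\begin{equation*}
\Phi_i=\sum_{\alpha\in\mathcal{A}_i}\di{(y_\alpha,L_\alpha)}\in\qspace{q_i}{\polyspace{1}{\real}{\real^n}}.
\end{equation*}
By construction $\sum_i\Phi_i=\tfa{f}(a)$, so it suffices to prove $\fmetric(g_i(x),\up{P}_{a,\Phi_i}(x))=o(|x-a|)$ as $x\to a$ in $I$. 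The uniqueness noted in the earlier Remark then gives $\tfa{g_i}(a)=\Phi_i$.

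Let $\eta$ be the minimal distance between distinct points of $\support{f(a)}$, and fix $\varepsilon<\eta/3$. Both $f$ and $\pfa{a}{f}$ are continuous at $a$; for $f$ this follows from affine approximability together with the continuity of $\pfa{a}{f}$. Hence for $x$ close to $a$ we have $\fmetric(f(x),f(a))\le\varepsilon$ and $\fmetric(\pfa{a}{f}(x),f(a))\le\varepsilon$, and Lemma \ref{lemma for local fmetric}(3) applies with $S=f(a)$, $T_1=f(x)$, $T_2=\pfa{a}{f}(x)$. This gives
\begin{equation*}
\fmetric(f(x),\pfa{a}{f}(x))=\sum_{y\in\support{f(a)}}\fmetric\bigl(f(x)\restrict\openball{y}{\varepsilon},\pfa{a}{f}(x)\restrict\openball{y}{\varepsilon}\bigr).
\end{equation*}

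The main step is to identify the restrictions. I claim that, for $x$ near $a$,
\begin{equation*}
f(x)\restrict\openball{y}{\varepsilon}=g_{i(y)}(x)\restrict\openball{y}{\varepsilon}
\quad\text{and}\quad
\pfa{a}{f}(x)\restrict\openball{y}{\varepsilon}=\up{P}_{a,\Phi_{i(y)}}(x)\restrict\openball{y}{\varepsilon}.
\end{equation*}
For the first identity, continuity of each $g_j$ at $a$ together with Lemma \ref{lemma for local fmetric}(2) applied to $S=g_j(a)$ shows that for $x$ near $a$ all points of $g_j(x)$ lie in $\openball{\support{g_j(a)}}{\varepsilon}$. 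Because the $\varepsilon$-balls around distinct points of $\support{f(a)}$ are disjoint, the ball $\openball{y}{\varepsilon}$ can only receive points of $g_{i(y)}(x)$. The second identity follows in the same way: the values $y_\alpha+(x-a)L_\alpha$ stay within $\varepsilon$ of $y_\alpha$. Moreover $g_i(x)$ and $\up{P}_{a,\Phi_i}(x)$ are the sums of these restrictions over the $y$ with $i(y)=i$.

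Applying Lemma \ref{lemma for local fmetric}(3) once more, now with $S=g_i(a)$ and using the inequality $\le$ that holds trivially, we get
\begin{equation*}
\fmetric(g_i(x),\up{P}_{a,\Phi_i}(x))\le\sum_{y:\,i(y)=i}\fmetric\bigl(f(x)\restrict\openball{y}{\varepsilon},\pfa{a}{f}(x)\restrict\openball{y}{\varepsilon}\bigr)\le\fmetric(f(x),\pfa{a}{f}(x)).
\end{equation*}
The right-hand side is $o(|x-a|)$, which finishes the proof. The endpoint case works identically with one-sided limits.

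The main obstacle is the careful bookkeeping in the localization step. One must check that $\varepsilon$-closeness of $g_j(x)$ to $g_j(a)$ places each point in the correct ball, which uses the density version of Lemma \ref{lemma for local fmetric}(2), and that the restrictions have matching masses, so that the localized $\fmetric_{\density{0}{S}{y}}$ distances are well defined.
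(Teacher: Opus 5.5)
Your proof is correct, but it is organized differently from the paper's. The paper first concatenates selections of the $g_i$ into a block-adapted selection $f_1,\ldots,f_q$ of $f$ and labels $\tfa{f}(a)=\sum_\alpha\di{(f_\alpha(a),L_\alpha)}$ accordingly. It then shows, by contradiction, that for $t$ near $a$ the optimal permutation $\sigma_t$ for $\fmetric(f(t),\pfa{a}{f}(t))$ maps each block $(Q_{i-1},Q_i]$ to itself. The argument is short: if $t_m\to a$ with a fixed block-crossing index, then $f_{\sigma_{t_m}(\alpha)}(t_m)\to f_\alpha(a)$, and continuity of the other $g_j$ would place $f_\alpha(a)$ in $\support{g_j(a)}$, contradicting disjointness at $a$. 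This gives the exact additivity $\fmetric(f(t),\pfa{a}{f}(t))=\sum_i\fmetric(g_i(t),\up{P}_{a,\Phi_i}(t))$, with no explicit neighbourhood and no use of Lemma \ref{lemma for local fmetric}.

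You work at a finer scale, localizing around the individual points of $\support{f(a)}$ with an explicit radius $\varepsilon<\eta/3$. You identify the restrictions of $f(x)$ and $\pfa{a}{f}(x)$ to each ball with those of $g_{i(y)}(x)$ and $\up{P}_{a,\Phi_{i(y)}}(x)$. You then combine the nontrivial direction of Lemma \ref{lemma for local fmetric}(3) for $S=f(a)$ with its trivial direction for $S=g_i(a)$. This yields only the inequality $\fmetric(g_i(x),\up{P}_{a,\Phi_i}(x))\le\fmetric(f(x),\pfa{a}{f}(x))$, which is all that is needed. Your route costs more bookkeeping, but it is quantitative and reuses the same machinery the paper uses in Lemma \ref{existence of splitting}; the paper's route is shorter and purely qualitative.

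One small repair. Take the continuity of $f$ at $a$ from $f=\sum_i g_i$ with each $g_i$ continuous. That is also what justifies $f(a)=\sum_\alpha\di{y_\alpha}$, i.e. that the constant parts of $\tfa{f}(a)$ are the points of $f(a)$. Affine approximability alone only gives $f(x)\to\pfa{a}{f}(a)$.
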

\begin{proof}
For each $i=1,\ldots,k$, let $h_{i,1},\ldots,h_{i,q_{i}}:I\rightarrow \real^n$ be an arbitrary selection for $g_i:I\rightarrow \qspace{q_i}{\real^n}$, $Q_i=\sum_{j=1}^{i}q_j$ and $Q_0=0$. 
We define for each $\alpha=1,\ldots, q$
\begin{equation*}
f_\alpha(t)=h_{i,\alpha-Q_{i-1}}(t), \text{ when } Q_{i-1}<\alpha\leq Q_i.
\end{equation*}
Therefore $f_1,\ldots,f_q$ defines a selection for $f$ such that
\begin{equation*}
g_i(t)=\sum_{\alpha=Q_{i-1}+1}^{Q_i}\di{f_\alpha(t)}
\end{equation*}
for each $i=1,\ldots, k$.

We may write $\tfa{f}(a)=\sum_{\alpha=1}^q\di{(f_\alpha(a),L_\alpha)}$, for some $L_1,\ldots,L_q\in\homspace{\real}{\real^n}$, and for each $t\in I$ let $\sigma_t\in\perm{q}$ be the permutation such that
\begin{equation*}
\fmetric(f(t),\pfa{a}{f}(t))=\sum_{\alpha=1}^q\|f_{\sigma_t(\alpha)}(t)-f_\alpha(a)-\langle t-a,L_\alpha\rangle\|.
\end{equation*}
\begin{claim}
There exists $\varepsilon>0$ such that for all $t\in I\cap\openball{a}{\varepsilon}$, $i=1,\ldots,k$ and $\alpha\in\pinteger\cap(Q_{i-1},Q_i]$ we have $\sigma_t(\alpha)\in\pinteger\cap(Q_{i-1},Q_i]$.
\end{claim}
Suppose false, that is, we can find a sequence $t_m\in I$ converging to $a$,  $i_m\in\{1,\ldots,k\}$ and $\alpha_m\in\pinteger\cap(Q_{i-1},Q_i]$ such that the statement is false.
By possibly taking a subsequence, we may assume that $i_m$, $\alpha_m$ and $\sigma_{t_m}(\alpha_m)$ are constants.
Without loss of generality we may take $i_m\equiv 1$, $\alpha_m\equiv 1$ and $\sigma_{t_m}(\alpha_m)=q_1+1$.

By letting $t_m$ tend to $a$, it follows from the choice of $\sigma_{t_m}$ that $f_{q_1+1}(t_m)\rightarrow f_1(a)$.
On the other hand, $f_{q_1+1}(t_m)\in\support g_2(t_m) $ and since $g_2$ is continuous, we have that $f_1(a)\in\support g_2(a)$.
That is, $\support g_1(a)\cap\support g_2(a)\neq\emptyset$, which contradicts condition \ref{definition of splitting}(ii).

Now, if we put 
\begin{equation*}
\Phi_i=\sum_{\alpha=Q_{i-1}+1}^{Q_i}\di{(f_\alpha(a),L_\alpha)}
\end{equation*}
for each $i=1,\ldots,k$, it follows from the claim that
\begin{equation*}
\fmetric(f(t),\pfa{a}{f}(t))=\sum_{i=1}^k\fmetric(g_i(t),\up{P}_{a,\Phi_i}(t))
\end{equation*}
for all $t$ sufficiently close to $a$, which concludes the proof.
\end{proof}

\begin{lemma}\label{pointed splitting preserves affine approximation}
Let $n,q\in\pinteger$, $A\subset\real$, $f:A\rightarrow\qspace{q}{\real^n}$ be a multiple-valued function, $I\subset A$ be an interval and $a\in I$.
Suppose that $f$ admits a pointed split on $(I,a)$ and the splitting is defined by the functions $g_1,\ldots,g_k$ for some $k\geq 2$.

If $f$ is affinely approximable at $b\in I\setminus\{a\}$ relative to $I$, then each $g_i$ is affinely approximable at $b$ relative to $I$ and
\begin{equation*}
\tfa{f}(b)=\sum_{i=1}^k \tfa{g_i}(b).
\end{equation*}
\end{lemma}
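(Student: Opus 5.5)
The plan is to reduce to Lemma \ref{splitting preserves affine approximation} by shrinking the interval so that it avoids $a$. Since $b\neq a$, put $\delta=|b-a|>0$ and let $J=I\cap\openball{b}{\delta/2}$. This $J$ is an interval contained in $I$, it contains $b$, and it does not contain $a$. If $b$ is an endpoint of $I$, then $b$ is also an endpoint of $J$. If $b$ is interior to $I$, then $b$ is interior to $J$. Hence ``affinely approximable at $b$ relative to $I$'' is equivalent to ``affinely approximable at $b$ relative to $J$'', both for $f$ and for each $g_i$. This holds because affine approximability depends only on the germ of the function at $b$ within the relevant one- or two-sided neighbourhood.

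Next we check that the restrictions $g_1|_J,\ldots,g_k|_J$ form a splitting of $f$ on $J$ in the sense of Definition \ref{definition of splitting}. Each $g_i|_J$ is continuous, because each $g_i$ is continuous on $I$. The identity $f(t)=\sum_i g_i(t)$ holds for all $t\in J$. In a pointed split, condition (ii) is only required on $I\setminus\{a\}$, and $J\subset I\setminus\{a\}$, so (ii) holds for every $t\in J$. The values $q_i$ and $k\geq2$ are unchanged.

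We then apply Lemma \ref{splitting preserves affine approximation} to $f|_J$, the interval $J$, the point $b$ and the splitting $g_1|_J,\ldots,g_k|_J$. This gives that each $g_i|_J$ is affinely approximable at $b$ relative to $J$ and that $\tfa{(f|_J)}(b)=\sum_i\tfa{(g_i|_J)}(b)$. By the germ remark above, these conclusions transfer back to $I$: the jets of the restrictions at $b$ coincide with $\tfa{f}(b)$ and $\tfa{g_i}(b)$.

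The only point requiring care is this endpoint/interior bookkeeping, which relies on the uniqueness of the affine approximation. Nothing in the proof of Lemma \ref{splitting preserves affine approximation} needs more than continuity of the $g_i$ and disjointness of supports near $b$, and both are available on $J$.
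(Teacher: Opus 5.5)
Your proposal is correct and follows the same route as the paper: restrict to an interval $J\subset I\setminus\{a\}$ containing $b$, observe that $g_1|_J,\ldots,g_k|_J$ is a genuine splitting of $f$ on $J$, and invoke Lemma \ref{splitting preserves affine approximation}. Your explicit choice $J=I\cap\openball{b}{|b-a|/2}$, together with the check that $b$ keeps its interior or endpoint status in $J$, makes precise the transfer from ``relative to $J$'' back to ``relative to $I$'', which the paper leaves implicit.
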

\begin{proof}
Simply note that if $f$ has a pointed split on $(I,a)$ and $b\in I\setminus\{a\}$, then we can find an interval $J\subset I\setminus\{a\}$ such that $b\in J$ and $g_1|_{J},\ldots,g_k|_{J}$ define a spliting of $f$ on $J$.
The result follows from Lemma \ref{splitting preserves affine approximation}.
\end{proof}

Now, let us concern ourselves with the existence of local splittings.
First we consider local splittings near points where $f$ does not concentrate on a single point and only assuming continuity of $f$.

\begin{lemma}\label{existence of splitting}
Let $n,q\in\pinteger$, $I\subset\real$ be an interval, $f:I\rightarrow\qspace{q}{\real^n}$ be a continuous multiple-valued function and $a\in I$.
Suppose $\cardinality{\support{f(a)}}\geq 2$, then there exists an interval $J$ relatively open in $I$ such that $a\in J$ and $f$ splits on $J$.
Furthermore, we can make it so that the number of splitting functions is $k=\cardinality{\support{f(a)}}$ and that each split function $g_i$ satisfies $\cardinality{\support{g_i(a)}}=1$.
\end{lemma}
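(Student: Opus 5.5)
Write $\support{f(a)}=\{y_1,\ldots,y_k\}$ with $k=\cardinality{\support{f(a)}}\geq 2$ distinct points, $q_i=\density{0}{f(a)}{y_i}$, and $\eta=\min\{\|y_i-y_j\|:i\neq j\}>0$. Fix $\varepsilon$ with $0<\varepsilon<\frac{\eta}{3}$. By continuity of $f$ at $a$ there is a relatively open interval $J\subset I$ with $a\in J$ such that $\fmetric(f(t),f(a))\leq\varepsilon$ for all $t\in J$. We then define, for $t\in J$ and $i=1,\ldots,k$,
\begin{equation*}
g_i(t)=f(t)\restrict\openball{y_i}{\varepsilon}.
\end{equation*}

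The first step is to check that each $g_i(t)\in\qspace{q_i}{\real^n}$ and that $f(t)=\sum_{i=1}^kg_i(t)$. Applying Lemma \ref{lemma for local fmetric}(2) with $S=f(a)$ and $T=f(t)$ gives $f(t)(\openball{y_i}{\varepsilon})=q_i$ for every $i$. Since $\sum_iq_i=q$, all the mass of $f(t)$ lies in the pairwise disjoint balls $\openball{y_i}{\varepsilon}$ (disjoint because $\varepsilon<\frac{\eta}{2}$). This yields both the decomposition (i) and the separation condition (ii) of Definition \ref{definition of splitting}: $\support{g_i(t)}\subset\openball{y_i}{\varepsilon}$, and these balls are disjoint. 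Moreover, $g_i(a)=q_i\di{y_i}$, so $\cardinality{\support{g_i(a)}}=1$.

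The remaining step, which is the only real one, is continuity of each $g_i$ on $J$. Take $t,s\in J$ and apply Lemma \ref{lemma for local fmetric}(3) with $S=f(a)$, $T_1=f(t)$ and $T_2=f(s)$. This uses $\varepsilon<\frac{\eta}{3}$ and the bounds $\fmetric(T_j,S)\leq\varepsilon$. It gives
\begin{equation*}
\fmetric_q(f(t),f(s))=\sum_{i=1}^k\fmetric_{q_i}(g_i(t),g_i(s)),
\end{equation*}
so $\fmetric_{q_i}(g_i(t),g_i(s))\leq\fmetric_q(f(t),f(s))$. Continuity of $g_i$ at every point of $J$ then follows from continuity of $f$. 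The main care needed is to use the common reference point $S=f(a)$ for all $t,s\in J$, rather than re-centering at $t$; this is why $J$ is chosen so that all of $f(J)$ lies in the $\varepsilon$-ball around $f(a)$ with $\varepsilon<\frac{\eta}{3}$. The number of splitting functions is $k=\cardinality{\support{f(a)}}\geq2$, as required.
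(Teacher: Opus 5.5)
Your proposal is correct and is essentially the paper's proof. The paper defines $g_i(t)$ through an optimal permutation $\sigma_t$ for $\fmetric(f(t),f(a))$, but then observes that $g_i(t)=f(t)\restrict\openball{y_i}{\eta/8}$, which is your definition; it also gets continuity exactly as you do, from Lemma~\ref{lemma for local fmetric}(3) with the fixed reference $S=f(a)$.

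One small fix: choose $J$ with the strict inequality $\fmetric(f(t),f(a))<\varepsilon$, as the paper does with $\eta/8$. With open balls and only $\leq\varepsilon$, a point of $f(t)$ at distance exactly $\varepsilon$ from $y_i$ escapes $\openball{y_i}{\varepsilon}$. Parts (2) and (3) of Lemma~\ref{lemma for local fmetric} as stated have the same boundary defect: take $S=\di{0}+\di{10}$, $T=\di{1}+\di{10}$ and $\varepsilon=1$ in $\real$.
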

\begin{proof}
Let $f_{\alpha}:I\rightarrow\real^n$ be an arbitrary selection for $f$ with $\alpha=1,\ldots,q$.
Put
\begin{equation*}
\eta=\inf\{\|y-y'\|:y,y'\in\support{f(a)},\,y\neq y'\}, k=\cardinality{\support{f(a)}}
\end{equation*}
and for each $i=1,\ldots,k$ we enumerate $y_i\in\support{f(a)}$ and write $q_i=\density{0}{f(a)}{y_i}$.
For each $t\in I$ there exists $\sigma_t\in\perm{q}$ such that
\begin{equation*}
\fmetric(f(t),f(a))=\sum_{\alpha=1}^q\|f_{\sigma_t(\alpha)}(t)-f_\alpha(a)\|
\end{equation*}
For each $i=1,\ldots,k$ we define $\mathcal{A}_i=\{\alpha\in\{1,\ldots,q\}:f_\alpha(a)=y_i\}$ and
\begin{equation*}
g_i(t)=\sum_{\alpha\in\mathcal{A}_i}\di{f_{\sigma_t(\alpha)}(t)}.
\end{equation*}
It is clear that $\cardinality{\mathcal{A}_i}=q_i$, $g_i(t)\in\qspace{q_i}{\real^n}$, $g_i(a)=q_i\di{y_i}$ and
\begin{equation*}
f(t)=\sum_{i=1}^kg_i(t).
\end{equation*}
That is, Definition \ref{definition of splitting}(i) holds.

Since $f$ is continuous, there exists $\delta>0$ such that
\begin{equation*}
\fmetric(f(t),f(a))<\frac{\eta}{8} \text{ for all }t\in (a-\delta,a+\delta)\cap I.
\end{equation*}
We fix $J=(a-\delta,a+\delta)\cap I$.

To verify Definition \ref{definition of splitting}(ii) we first note that 
\begin{equation*}
\begin{aligned}
\frac{\eta}{8} & >\fmetric(f(t),f(a))\\
               & = \sum_{\alpha=1}^q\|f_{\sigma_t(\alpha)}(t)-f_\alpha(a)\|\\
							 & = \sum_{i=1}^k\sum_{\alpha\in\mathcal{A}_i}\|f_{\sigma_t(\alpha)}(t)-y_i\|.
\end{aligned}
\end{equation*}
Hence, for all $t\in J$, $i=1,\ldots,k$ and $\alpha\in\mathcal{A}_i$ we have
\begin{equation*}
f_{\sigma_t(\alpha)}(t)\in\openball{y_i}{\frac{\eta}{8}}.
\end{equation*}
Furthermore, the collection $\{\openball{y_i}{\frac{\eta}{8}}:i=1,\ldots,k\}$ is pairwise disjoint.
Indeed, let $i\neq j$, $z\in\openball{y_i}{\frac{\eta}{8}}$ and $z'\in\openball{y_j}{\frac{\eta}{8}}$ then
\begin{equation*}
\begin{aligned}
\|z-z'\| & = \|z-y_i-z'+y_j+y_i-y_j\|\\
       & \geq \|y_i-y_j\| - \|z-y_i\| - \|z'-y_j\|\\
			 & \geq \frac{3\eta}{4}
\end{aligned}
\end{equation*}
In other words, for all $i\neq j$ and $t\in J$ we have
\begin{equation*}
\support{g_i(t)}\cap\support{g_j(t)}=\emptyset.
\end{equation*}

It also follows from the above that
\begin{equation*}
f(t)\restrict \openball{y_i}{\frac{\eta}{8}}=g_i(t).
\end{equation*}

It remains to prove that each $g_i$ is continuous on $J$.
Fix $t_0\in J$, take $s\in J$ and apply Lemma \ref{lemma for local fmetric}(3) with $S=f(a)$, $\varepsilon=\frac{\eta}{6}$, $T_1=f(t_0)$ and $T_2=f(s)$.
From the choice of $J$ we already know that $\fmetric(T_1,S)<\varepsilon$.
Given an arbitrary $0<\varepsilon'<\varepsilon$ we may find $\delta'>0$ such that $|s-t_0|<\delta'$ and $s\in J$ implies $\fmetric(f(t_0),f(s))<\varepsilon'$, that is, that is, $\fmetric(T_2,S)<\varepsilon$.
Thus,
\begin{equation*}
\fmetric_q(f(t_0),f(s))=\sum_{i=1}^k\fmetric_{q_i}(g_i(t_0),g_i(s)),
\end{equation*}
which concludes the proof.
\end{proof}

\begin{remark}
Note that the proofs of the three Lemmas above do not rely on the dimension of the domain.
\end{remark}

In the next case we construct local pointed splittings near points $a$ where $f$ is affinely approximable at $a$, $f(a)$ concentrates in a single point but $f$ is not strongly affinely approximable at $a$.

\begin{lemma}\label{existence of pointed splitting}
Let $n,q\in\pinteger$, $I\subset\real$ be an interval, $f:I\rightarrow\qspace{q}{\real^n}$ be a continuous multiple-valued function and $a\in I$.
Suppose that $f$ is affinely approximable at $a$, $\cardinality{\support{f(a)}}=1$ and $\cardinality{\support{Df(a)}}\geq 2$.
Then there exists an interval $J\subset I$ such that $f$ admits a pointed split on $(J,a)$.
Furthermore, we can make it so that the number of splitting functions is $k=\cardinality{\support{Df(a)}}$ and that each pointed split function $g_i$ satisfies $\cardinality{\support{Dg_i(a)}}=1$.
\end{lemma}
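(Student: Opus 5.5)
Write $f(a)=q\di{y}$ and $\tfa{f}(a)=\sum_{\alpha=1}^q\di{(y,L_\alpha)}$. Identify each $L_\alpha\in\homspace{\real}{\real^n}$ with the vector $v_\alpha=\langle 1,L_\alpha\rangle\in\real^n$. Let $w_1,\ldots,w_k$ be the distinct values among the $v_\alpha$, with multiplicities $q_1,\ldots,q_k$, so that $k=\cardinality{\support{Df(a)}}\geq 2$. Put
\begin{equation*}
\eta=\min\{\|w_i-w_j\|:i\neq j\}>0 .
\end{equation*}
The key observation is that the Taylor polynomial
\begin{equation*}
\pfa{a}{f}(t)=\sum_{i=1}^k q_i\di{y+(t-a)w_i}
\end{equation*}
has support points separated by exactly $|t-a|\eta$ for $t\neq a$. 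Affine approximability gives $\fmetric(f(t),\pfa{a}{f}(t))=o(|t-a|)$. Hence the separation $\eta|t-a|$ beats the error, and Lemma \ref{lemma for local fmetric} can be applied at each scale $|t-a|$.

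\textbf{Construction.} First choose $\delta>0$ such that $\fmetric(f(t),\pfa{a}{f}(t))<\tfrac{\eta}{8}|t-a|$ for all $t\in J=I\cap(a-\delta,a+\delta)$. For $t\in J\setminus\{a\}$, define
\begin{equation*}
g_i(t)=f(t)\restrict\openball{y+(t-a)w_i}{\tfrac{\eta}{8}|t-a|},
\end{equation*}
and set $g_i(a)=q_i\di{y}$. Lemma \ref{lemma for local fmetric}(2), applied with $S=\pfa{a}{f}(t)$ and $\varepsilon=\tfrac{\eta}{8}|t-a|$, shows three things: $g_i(t)\in\qspace{q_i}{\real^n}$, $f(t)=\sum_i g_i(t)$, and the supports of the $g_i(t)$ are pairwise disjoint when $t\neq a$, since the balls are disjoint. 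This gives the algebraic part of Definition \ref{definition of splitting}, with (ii) holding on $J\setminus\{a\}$.

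\textbf{Affine approximability of each $g_i$ at $a$.} Lemma \ref{lemma for local fmetric}(3), or the argument inside (2), shows that an optimal permutation matches the points of $g_i(t)$ with the $q_i$ copies of $y+(t-a)w_i$. Therefore
\begin{equation*}
\sum_i\fmetric(g_i(t),q_i\di{y+(t-a)w_i})=\fmetric(f(t),\pfa{a}{f}(t))=o(|t-a|).
\end{equation*}
So each $g_i$ is affinely approximable at $a$ relative to $J$ with $\tfa{g_i}(a)=q_i\di{(y,w_i)}$, and in particular $\cardinality{\support{Dg_i(a)}}=1$. Continuity of $g_i$ at $a$ also follows from this estimate: $\fmetric(g_i(t),g_i(a))\leq q_i|t-a|\|w_i\|+o(|t-a|)$.

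\textbf{Continuity at $t_0\in J\setminus\{a\}$ (the main obstacle).} The balls defining $g_i(s)$ move with $s$, so one must check that nearby $f(s)$ is decomposed in the same way as $f(t_0)$. The approach is to take $S=\pfa{a}{f}(t_0)$, whose separation is $\eta_0=\eta|t_0-a|$.
\begin{enumerate}[(1)]
\item For $s$ close to $t_0$, $\fmetric(f(s),S)\leq\fmetric(f(s),\pfa{a}{f}(s))+q|s-t_0|\max_i\|w_i\|$. This is less than $\tfrac{\eta}{8}|t_0-a|+$(small), hence below $\eta_0/6$ once $s$ is close to $t_0$.
\item Lemma \ref{lemma for local fmetric}(3) with $T_1=f(t_0)$, $T_2=f(s)$ and $\varepsilon=\eta_0/6$ then gives
\begin{equation*}
\fmetric(f(t_0),f(s))=\sum_i\fmetric_{q_i}\big(f(t_0)\restrict\openball{y+(t_0-a)w_i}{\varepsilon},\,f(s)\restrict\openball{y+(t_0-a)w_i}{\varepsilon}\big).
\end{equation*}
\item It remains to identify $f(s)\restrict\openball{y+(t_0-a)w_i}{\varepsilon}$ with $g_i(s)$. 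The points of $g_i(s)$ lie within $\tfrac{\eta}{8}|s-a|$ of $y+(s-a)w_i$, which is within $|s-t_0|\|w_i\|$ of $y+(t_0-a)w_i$. Hence they lie in $\openball{y+(t_0-a)w_i}{\varepsilon}$ for $s$ near $t_0$, and the counts agree. The same holds for $f(t_0)$, which is what must be checked carefully.
\end{enumerate}
Consequently $\fmetric_{q_i}(g_i(t_0),g_i(s))\leq\fmetric(f(t_0),f(s))\to 0$ by continuity of $f$, so each $g_i$ is continuous on $J$ and $g_1,\ldots,g_k$ is a pointed split at $(J,a)$.
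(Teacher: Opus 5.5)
Your proposal is correct and follows essentially the paper's proof: the same scale $\tfrac{\eta}{8}|t-a|$ from affine approximability, the separation $\eta|t-a|$ of $\pfa{a}{f}(t)$, and, for continuity at $t_0\neq a$, the same application of Lemma \ref{lemma for local fmetric}(3) with $S=\pfa{a}{f}(t_0)$ and $\varepsilon=\tfrac{\eta}{6}|t_0-a|$. The only difference is cosmetic: you define $g_i(t)$ directly as the restriction of $f(t)$ to $\openball{y+(t-a)w_i}{\tfrac{\eta}{8}|t-a|}$, whereas the paper defines it through an optimal permutation $\sigma_t$ and then shows it equals that same restriction.
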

\begin{proof}
Let $f_\alpha:I\rightarrow\real^n$ be an arbitrary selection for $f$ with $\alpha=1,\ldots,q$, $y_0\in\real^n$ with $\support f(a)=\{y_0\}$ and $L_1,\ldots,L_q\in\homomorphism{\real}{\real^n}$ with $Df(a)=\sum_{\alpha=1}^q\di{L_\alpha}$.
Put 
\begin{equation*}
\eta=\inf\{\|M-M'\|_{\odot^1}:M,M'\in\support{Df(a)},\,M\neq M'\}, k=\cardinality{\support{Df(a)}},
\end{equation*}
enumerate $\support{Df(a)}=\{M_1,\ldots,M_k\}$ and write $q_i=\density{0}{Df(a)}{M_i}$ for each $i=1,\ldots,k$.
For every $t\in I$ there exists $\sigma_t\in\perm{q}$ such that
\begin{equation*}
\fmetric(f(t),\pfa{a}{f}(t))=\sum_{\alpha=1}^q\|f_{\sigma_t(\alpha)}(t)-y_0-\langle t-a,L_\alpha\rangle\|
\end{equation*}
Now, for each $i=1,\ldots,k$ we define $\mathcal{A}_i=\{\alpha\in\{1,\ldots,q\}:L_\alpha=M_i\}$ and
\begin{equation*}
g_i(t)=\sum_{\alpha\in\mathcal{A}_i}\di{f_{\sigma_t(\alpha)}(t)}.
\end{equation*}
It is clear that $\cardinality{\mathcal{A}_i}=q_i$, $g_i(t)\in\qspace{q_i}{\real^n}$ and
\begin{equation*}
f(t)=\sum_{i=1}^kg_i(t).
\end{equation*}
That is, Definition \ref{definition of splitting}(i) holds.
Furthermore, by choice of $\sigma_t$, we have 
\begin{equation*}
\begin{aligned}
\fmetric(f(t),\pfa{a}{f}(t)) & =\sum_{i=1}^k\sum_{\alpha\in\mathcal{A}_i}\|f_{\sigma_t(\alpha)}(t)-y_0-\langle t-a,M_i\rangle\|
\end{aligned}
\end{equation*}
Hence, for each $i=1,\ldots,k$ and $\alpha\in\mathcal{A}_i$ the function $t\mapsto f_{\sigma_t(\alpha)}(t)$ is differentiable at $a$ with derivative equal to $M_i$ and $g_i$ is affinely approximable at $a$ with 
\begin{equation*}
\tfa{g_i}(a)=q_i\di{(y_0,M_i)} \text{ for each } i=1,\ldots,k.
\end{equation*}

\begin{figure}[H]
\captionsetup{justification=centering}
\centering
\begin{tikzpicture}[yscale=0.4,xscale=0.6]

\draw[red] (-2,-2) -- (4,4);
\draw[red,dashed](30:-3)--(0:0) -- (30:6);
\draw[red,dashed](60:-3)--(0:0) -- (60:6);
\node [label={[label distance = -0.2 cm]	0:{\color{red}$M_i$}}] at (4,4) {};

\draw[blue] (-2,2) -- (4,-4);
\draw[blue,dashed](-30:-3)--(0:0) -- (-30:6);
\draw[blue,dashed](-60:-3)--(0:0) -- (-60:6);
\node [label={[label distance = -0.35 cm]-30:{\color{blue}$M_j$}}] at (4,-4) {};

\draw[thick] (-2,-2.5) .. controls (-0.5,-1.5) and (-1,-1) .. (0,0) .. controls (1,1) and (3.5,1.5) .. (3.5,4.5);
\draw[thick] (-2.5,-2) .. controls (-1.5,-0.5) and (-1,-1) .. (0,0) .. controls (1,1) and (1.5,3) .. (5,3.5);
\node [label={[label distance = -0.1 cm]30:{$g_i$, $q_i=2$}}] at (3.5,4.5) {};

\draw[thick] (-2.5,2) .. controls (-0.5,1.5) and (-0.5,0.5) .. (0,0) .. controls (0.5,-0.5) and (2.5,-4) .. (4.5,-3.5);
\node [label={[label distance = -0.1 cm]0:{$g_j$, $q_j=1$}}] at (4.5,-3.5) {};

\draw[->, very thick] (-4,-5) -- (6,-5);
\draw[very thick] (0,-5.2) -- (0,-4.8);
\draw[dotted, thick] (0,0) -- (0,-5);
\node [label={[label distance = 0.1 cm]-90:$a$}] at (0,-5) {};
\end{tikzpicture}
\label{fig:pointed split diagram 2}
\end{figure}

Since $f$ is affinely approximable at $a$, there exists $\delta>0$ such that
\begin{equation*}
\fmetric(f(t),\pfa{a}{f}(t))<\frac{\eta}{8}|t-a| \text{ for all }t\in (a-\delta,a+\delta)\cap I.
\end{equation*}
We fix $J=(a-\delta,a+\delta)\cap I$.

To verify Definition \ref{definition of splitting}(ii) away from $a$, let $t\in J\setminus\{a\}$ and note that
\begin{equation*}
\|f_{\sigma_t(\alpha)}(t)-y_0-\langle t-a,M_i\rangle\|<\frac{\eta}{8}|t-a|\text{ for all }i=1,\ldots,k\text{ and }\alpha\in\mathcal{A}_i.
\end{equation*}
That is,
\begin{equation*}
\support{g_i(t)}\subset\openball{y_0+\langle t-a, M_i\rangle}{\frac{\eta}{8}|t-a|}\text{ for all }i=1,\ldots,k.
\end{equation*}
We also observe that for all $t\in J\setminus\{a\}$ the collection of slightly larger balls $\{\openball{y_0+\langle t-a,M_i\rangle}{\frac{\eta}{6}|t-a|}:i=1,\ldots,k\}$ is pairwise disjoint.
Indeed, let $i\neq j$, $z\in\openball{y_0+\langle t-a, M_i\rangle}{\frac{\eta}{6}|t-a|}$ and $z'\in\openball{y_0+\langle t-a, M_j\rangle}{\frac{\eta}{6}|t-a|}$ then
\begin{equation*}
\begin{aligned}
\|z-z'\| & = \|z-y_0-\langle t-a, M_i\rangle-z'+y_0+\langle t-a, M_j\rangle+\langle t-a, M_i- M_j\rangle\|\\
       & \geq \|M_i-M_j\|_{\odot^1}|t-a| - \|z-y_0-\langle t-a, M_i\rangle\| - \|z'-y_0-\langle t-a, M_j\rangle\|\\
			 & \geq \eta|t-a| - \frac{\eta}{3}|t-a|\\
			 & = \frac{2\eta}{3}|t-a|,
\end{aligned}
\end{equation*}
where in the second line we used the fact that the domain is one-dimensional to obtain
\begin{equation*}
\|\langle t-a,M_i-M_j\rangle\|_{\real^n}=\|M_i-M_j\|_{\odot^1}|t-a|.
\end{equation*}
Thus, $\support{g_i(t)}\cap\support{g_j(t)}=\emptyset$ for all $i\neq j$ and $t\in J\setminus\{a\}$, which proves Definition \ref{definition of splitting}(ii) away from $a$.
It further follows from the above that
\begin{equation*}
f(t)\restrict\openball{y_0+\langle t-a,M_i\rangle}{\frac{\eta}{8}|t-a|}=g_i(t)
\end{equation*}
for all $i=1,\ldots,k$ and $t\in J\setminus\{a\}$.

\begin{figure}[H]
\captionsetup{justification=centering}
\centering
\begin{tikzpicture}[yscale=0.4,xscale=0.6]

\draw[red] (-2,-2) -- (4,4);
\draw[red,dashed](30:-3)--(0:0) -- (30:6);
\draw[red,dashed](60:-3)--(0:0) -- (60:6);
\node [label={[label distance = -0.2 cm]0:{\color{red}$M_i$}}] at (4,4) {};

\draw[thick] (-2,-2.5) .. controls (-0.5,-1.5) and (-1,-1) .. (0,0) .. controls (1,1) and (3.5,1.5) .. (3.5,4.5);
\draw[thick] (-2.5,-2) .. controls (-1.5,-0.5) and (-1,-1) .. (0,0) .. controls (1,1) and (1.5,3) .. (5,3.5);
\node [label={[label distance = -0.1 cm]30:{$g_i$, $q_i=2$}}] at (3.5,4.5) {};

\draw[->, very thick] (-4,-5) -- (6,-5);
\draw[very thick] (0,-5.2) -- (0,-4.8);
\draw[dotted, thick] (0,0) -- (0,-5);
\node [label={[label distance = 0.1 cm]-90:$a$}] at (0,-5) {};

\draw[dotted, thick]  (2.5,2.5) node (v1) {} circle (0.87);
\node [label={[label distance = -0.1 cm]100:$y_0$}] at (0,0) {};

\draw[dotted, thick] (2.5,2.5) -- (2.5,-5);
\node [label={[label distance = -0.1 cm]-90:$t$}] at (2.5,-5) {};
\node [label={[label distance = 0.15 cm]150:$\openball{y_0+\langle t-a,M_i\rangle}{\frac{\eta}{8}|t-a|}$}] at (2.5,2.5) {};
\draw[fill] (2.5,2.5) circle (0.05);
\end{tikzpicture}
\label{fig:pointed split diagram 1}
\end{figure}

It remains to prove that each $g_i$ is continuous on $J$.
It follows trivially from the construction that $g_i$ is continuous at $a$.
Now fix $t_0\in J\setminus\{a\}$ and $\varepsilon=\frac{\eta}{6}|t_0-a|$.
If $0<\varepsilon'<\varepsilon$ is arbitrary, we may find $\delta'>0$ so that $|s-t_0|<\delta'$ and $s\in J\setminus\{a\}$ implies
\begin{equation*}
\left(\frac{\eta}{8}+\sup\{\|M_i\|_{\odot^1}:i=1,\ldots,k\}\right)|s-t_0|<\frac{\eta}{24}|t_0-a|
\end{equation*}
and 
\begin{equation*}
\fmetric(f(t_0),f(s))<\varepsilon'.
\end{equation*}
First we note that for each $i=1,\ldots,k$ and $|s-t_0|<\delta'$ we have
\begin{equation*}
\support{g_i(s)}\subset\openball{y_0+\langle t_0-a,M_i\rangle}{\frac{\eta}{6}|t_0-a|}.
\end{equation*}
Indeed, let $\alpha\in\mathcal{A}_i$ and compute
\begin{equation*}
\begin{aligned}
\|f_{\sigma_s(\alpha)}(s)-(y_0+\langle t_0-a,M_i\rangle)\| & \leq \|f_{\sigma_s(\alpha)}(s)-(y_0+\langle s-a,M_i\rangle)\|\\
&\quad + \|M_i\|_{\odot^1}|s-t_0|\\
                             & \leq \frac{\eta}{8}|s-a|+\|M_i\|_{\odot^1}|s-t_0|\\
														 & \leq \frac{\eta}{8}|t_0-a|+\left(\frac{\eta}{8}+\|M_i\|_{\odot^1}\right)|s-t_0|\\
														 & < \frac{\eta}{8}|t_0-a|+\frac{\eta}{24}|t_0-a|\\
												     & = \frac{\eta}{6}|t_0-a|.
\end{aligned}
\end{equation*}
Therefore, we have
\begin{equation*}
f(s)\restrict\openball{y_0+\langle t_0-a, M_i\rangle}{\frac{\eta}{6}|t_0-a|}=g_i(s)
\end{equation*}
for all $|s-t_0|<\delta'$ and $i=1,\ldots,k$.

Finally, we define 
\begin{equation*}
S=\sum_{\alpha=1}^q\di{y_0+\langle t_0-a, L_\alpha\rangle}=\sum_{i=1}^kq_i\di{y_0+\langle t_0-a,M_i\rangle},
\end{equation*}
$\varepsilon=\frac{\eta}{6}|t_0-a|$, $T_1=f(t_0)$ and $T_2=f(s)$.
Observe that
\begin{equation*}
\inf\{\|y-y'\|:y,y'\in\support{S},\,y\neq y'\}=\eta|t_0-a|,
\end{equation*}
$\fmetric(T_1,S)\leq\varepsilon$ and if $|s-t_0|<\delta'$ then $\fmetric(T_2,S)<\varepsilon$.
We apply Lemma \ref{lemma for local fmetric}(3) to $S$, $T_1$, $T_2$ and $\varepsilon$ and obtain
\begin{equation*}
\fmetric_q(f(t_0),f(s))=\sum_{i=1}^k\fmetric_{q_i}(g_i(t_0),g_i(s)),
\end{equation*}
which concludes the proof.
\end{proof}

It is not always possible to find a local pointed splitting near $q$-points where $f$ is strongly affinely approximable.
Nevertheless, as we will see, we are still able to find a local differentiable selection near such points.

\begin{theorem}\label{main theorem 1}
Let $n,q\in\pinteger$, $I\subset\real$ be an interval and $f:I\rightarrow\qspace{q}{\real^n}$ be a multiple-valued function.
If $f$ is affinely approximable on $I$, then $f$ admits a differentiable selection.
\end{theorem}
\begin{proof}
We will prove it by induction on $q$.
The case $q=1$ is trivial and there is nothing to prove.
Take $q\geq 2$ and suppose that the statement holds for all $1\leq q'<q$.

We define
\begin{equation*}
\begin{aligned}
A & = \{t\in I:\cardinality{\support{f(t)}}=1\},\\
B & = \{t\in A:\cardinality{\support{Df(t)}}\geq 2\} \text{ and }\\
C & = \{t\in A:\cardinality{\support{Df(t)}}=1\}.
\end{aligned}
\end{equation*}
Note that $A$ is closed relative to $I$, $B$ corresponds to the points in $A$ in which $f$ is not strongly affinely differentiable and $C$ corresponds to the points in $A$ where $f$ is strongly affinely approximable.

\begin{figure}[H]
\captionsetup{justification=centering}
\centering
\usetikzlibrary{arrows}
\begin{tikzpicture}[scale=0.5]

\draw[very thick] (-6,-2) -- (-4,-2) node (v1) {};
\draw[very thick] (-6,-2.2) -- (-6,-1.8);
\draw[very thick] (-4,-2.2) -- (-4,-1.8);

\draw (-6,0) -- (6,0);
\draw (-4,0) .. controls (-3.5,0) and (-2.5,1.5) .. (-2,1.5) .. controls (-1.5,1.5) and (-1,1) .. (-0.5,0) .. controls (0,-1) and (1,-1.5) .. (2,-1.5) .. controls (3,-1.5) and (4.5,0) .. (5,0);
\draw (-4,0) .. controls (-3.5,0) and (-3,-1.5) .. (-2.5,-1.5) .. controls (-2,-1.5) and (-1,-0.5) .. (-0.5,0) node (v3) {} .. controls (0,0.5) and (0.5,1) .. (1,1) .. controls (1.5,1) and (2,0) .. (2.5,0);
\draw (3,0) .. controls (3.5,0) and (3.5,1) .. (4,1) .. controls (4.5,1) and (4.5,0) .. (5,0);

\node [label={[label distance = -0.1 cm]90:{$3$}}] at (-5,0) {};

\node [label={[label distance = -0.1 cm]90:{$1$}}] at (-2,1.5) {};
\node [label={[label distance = -0.1 cm]90:{$1$}}] at (-2,0) {};
\node [label={[label distance = -0.1 cm]90:{$1$}}] at (-2.5,-1.5) {};

\node [label={[label distance = -0.1 cm]90:{$3$}}] at (-0.5,0) {};

\node [label={[label distance = -0.1 cm]30:{$2$}}] at (2.5,0) {};
\node [label={[label distance = -0.3 cm]120:{$1$}}] at (3,-1) {};

\node [label={[label distance = -0.1 cm]90:{$3$}}] at (5,0) {};

\draw[red,thick] (-3.95,-2) -- (-0.58,-2);
\draw[fill, blue] (-0.5,-2) circle (0.05);
\draw[red,thick] (-0.38,-2) -- (4.9,-2);
\draw[red,thick] (5.08,-2) -- (6,-2);
\draw[very thick] (5,-2.2) -- (5,-1.8);

\node [label={[label distance = 0.1 cm]-90:{\color{blue}$B$}}] (v2) at (-0.5,-2) {};
\node [label={[label distance = 0.1 cm]-90:{\color{red}$I\setminus A$}}] at (-2,-2) {};
\node [label={[label distance = 0.1 cm]-90:{\color{red}$I\setminus A$}}] at (2.5,-2) {};
\node [label={[label distance = 0.1 cm]-90:{\color{red}$I\setminus A$}}] at (6.2,-2) {};
\node [label={[label distance = 0.1 cm]-90:{$C$}}] (v4) at (5,-2) {};
\node [label={[label distance = 0.1 cm]-90:{$C$}}] at (-5,-2) {};
\draw (5,0) .. controls (5.5,0) and (5.5,-1) .. (6,-1.5);
\draw (5,0) node (v5) {} .. controls (5.5,0) and (5.5,0.5) .. (6,1);
\draw[blue, dotted] (v2) -- (v3);
\draw[dotted] (v4) -- (v5);
\end{tikzpicture}
\label{fig:domain}
\end{figure}

\begin{claim}
$C$ is closed relative to $I$.
\end{claim}
Since $C\subset A$ and $A$ is closed relative to $I$, it is sufficient to prove that $C$ is closed relative to $A$ or equivalently that $B=A\setminus C$ is open relative to $A$.
To that effect we take $b\in B$ and we will prove that $b$ is isolated in $A$.
It follows that
\begin{equation*}
f(b)=q\di{y_0} \text{ and } Df(b)=\sum_{\alpha=1}^q\di{M_\alpha}
\end{equation*}
for some $y_0\in\real^n$ and $M_1,\ldots,M_q\in\homspace{\real}{\real^n}$ with at least two distinct elements.
Since the domain is one-dimensional we have
\begin{equation*}
\|\langle x,L\rangle\|_{\real^n}=\|L\|_{\odot^1(\real,\real^n)}|x|
\end{equation*}
for all $x\in\real$ and $L\in\homspace{\real}{\real^n}$.
Now, let 
\begin{equation*}
\eta_0=\inf\{\|M-M'\|:M,M'\in\support{Df(b)}, M\neq M'\}>0
\end{equation*} 
 and $S=\pfa{b}{f}(x)$ for some $x\in I\setminus\{b\}$.
If $y,y'\in\support{S}$, then
\begin{equation*}
\begin{aligned}
y-y'& = y_0+\langle x-b,M\rangle-y_0-\langle x-b,M'\rangle\\
    & = \langle x-b, M-M'\rangle
\end{aligned}
\end{equation*}
for some $M,M'\in\support{Df(b)}$.
Therefore,
\begin{equation*}
\begin{aligned}
\eta & = \inf\{\|y-y'\|:y,y'\in \support{S},y\neq y'\}\\
     & = \inf\{\|M-M'\|:M,M'\in\support{Df(b)}, M\neq M'\}|x-b|\\
		 & = \eta_0|x-b|.
\end{aligned}
\end{equation*} 
We may find $\delta>0$ sufficiently small such that if $0<|x-b|<\delta$ and $x\in I$ then $\fmetric(f(x),S)<\frac{\eta}{3}$.
We apply Lemma \ref{lemma for local fmetric}(2) to $S$, $T=f(x)$ and $\varepsilon=\frac{\eta}{3}$ to obtain that
\begin{equation*}
T(\openball{y}{\varepsilon})=S(\openball{y}{\varepsilon}) \text{ for all }y\in\support{S},
\end{equation*} 
which implies that $\cardinality{\support{T}}\geq 2$, that is, $x\not\in A$, which proves the claim.

\begin{claim}
For each $x\in I\setminus C$ there exists an interval $J\subset I$ such that $x\in J$ and $f|_J$ admits a differentiable selection.
\end{claim}
Suppose that $x\in I\setminus A$, then $\cardinality{\support{f(x)}}\geq 2$.
It follows from Lemma \ref{existence of splitting} that there exists an interval $J\subset I$ such that $f$ splits on $J$ as functions $g_i:J\rightarrow\qspace{q_i}{\real^n}$ for $i=1,\ldots,k$ with $k\geq 2$ and $1\leq q_i<q$.
From Lemma \ref{splitting preserves affine approximation} we have that each $g_i$ is affinely approximable on $J$.
Hence, the induction hypothesis gives us a differentiable selection for each $g_i$ which naturally defines a differentiable selection for $f|_J$.

Now, suppose that $x\in B$, then Lemma \ref{existence of pointed splitting} gives us an interval $\tilde{J}\subset I$ such that $f$ admits a pointed split on $(\tilde{J},x)$ defined by functions $\tilde{g}_i:\tilde{J}\rightarrow\qspace{\tilde{q}_i}{\real^n}$ for $i=1,\ldots,\tilde{k}$ with $\tilde{k}\geq 2$ and $1\leq \tilde{q}_i<q$.
From Lemma \ref{pointed splitting preserves affine approximation} we have that each $\tilde{g}_i$ is affinely approximable on $\tilde{J}$.
Hence, the induction hypothesis gives us a differentiable selection for each $\tilde{g}_i$ which naturally defines a differentiable selection for $f|_{\tilde{J}}$ and concludes the proof of the claim.

Each connected component $J\subset I\setminus C$ is an open interval relative to I so we use the previous claim together with Lemma \ref{local to global on closed interval} to find differentiable selections $h_{J,1},\ldots,h_{J,q}:J\rightarrow\real^n$ of $f|_J$.
To extend these selection across $C$, we note that for each $x\in C$ we have that
\begin{equation*}
f(x)=q\di{y(x)}
\end{equation*}
for some $y(x)\in\real^n$ and
\begin{equation*}
Df(x)=q\di{M(x)}
\end{equation*}
for some $M(x)\in\homomorphism{\real}{\real^n}$.

Finally, we define $h_1,\ldots,h_q:I\rightarrow \real^n$ as
\begin{equation*}
h_\alpha(x)=\left\{
\begin{aligned}
 &h_{J,\alpha}(x) && , \text{ if }J\text{ is a connected component of }I\setminus C \text{ and } x\in J\text{ or }\\
 &y(x)            &&, \text{ if }x\in C.
\end{aligned}\right.
\end{equation*}
It is clear that $h=(h_\alpha)_{\alpha=1,\ldots,q}$ defines a selection of $f$ on $I$ and $h$ is differentiable at every $x\in I\setminus C$.
It remains to prove that $h$ is differentiable at points in $C$.
Indeed, if $c\in C$ we note that for any permutation $\sigma\in\perm{q}$ we have $h_{\sigma(\alpha)}(c)=y(c)$ and $Df(c)=\sum_{\alpha=1}^q\di{L_\alpha}=q\di{M(c)}$ so that
\begin{equation*}
\begin{aligned}
\fmetric(f(x),\pfa{c}{f}(x)) & = \sum_{\alpha=1}^q\|h_\alpha(x)-h_{\sigma(\alpha)}(c)-\langle x-c,L_{\sigma(\alpha)}\rangle\|\\
                              & = \sum_{\alpha=1}^q\|h_\alpha(x)-y(c)-\langle x-c,M(c)\rangle\|\\
                              & = \sum_{\alpha=1}^q\|h_\alpha(x)-h_\alpha(c)-\langle x-c,M(c)\rangle\|,
\end{aligned}
\end{equation*}
which proves that each $h_\alpha$ is differentiable at $c$ with $Dh_\alpha(c)=M(c)$.
\end{proof}

\begin{corollary}\label{existence of selection of class 1}
Let $n,q\in\pinteger$, $I\subset\real$ be an interval and $f:I\rightarrow\qspace{q}{\real^n}$ be a multiple-valued function.
If $f$ is affinely approximable and $Df$ is continuous, then $f$ admits a selection of class $1$.
\end{corollary}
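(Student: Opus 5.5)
The corollary follows by combining Theorem \ref{main theorem 1} with Proposition \ref{improvement from differentiable to class 1}, so the plan is to chain these two results and check that their hypotheses match.

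\textbf{Step 1 (existence of a differentiable selection).} Since $f$ is affinely approximable on $I$, Theorem \ref{main theorem 1} provides a differentiable selection $g:I\rightarrow(\real^n)^q$ for $f$. If $I$ has endpoints, differentiability there is one-sided, in the sense of affine approximability relative to $I$. This matches the convention used in Theorem \ref{main theorem 1}, since Lemmas \ref{local to global selection} and \ref{local to global on closed interval} produce selections with one-sided derivatives at endpoints.

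\textbf{Step 2 (upgrade to class $1$).} Since $Df$ is continuous by hypothesis, Proposition \ref{improvement from differentiable to class 1} applied to $f$ and $g$ shows that $g$ is of class $1$. That proposition uses Lemma \ref{differentiability is preserved} to see that $Dg$ is a selection of $Df$, and then uses the Darboux-type Lemma \ref{darboux lemma} to rule out oscillation of each $Dg_\alpha$.

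The only point requiring care is consistency of the notion of derivative at endpoints of $I$. The continuity of $Df$ must be understood relative to $I$, and the one-sided derivatives $D^\pm g_\alpha$ must be those appearing in Proposition \ref{improvement from differentiable to class 1}. Both results are stated with these same conventions, so no further argument is needed, and $g$ is the desired selection of class $1$.
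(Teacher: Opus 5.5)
Your proposal is correct and is exactly the paper's argument: it applies Theorem \ref{main theorem 1} to obtain a differentiable selection, and then Proposition \ref{improvement from differentiable to class 1} to upgrade that selection to class $1$ using the continuity of $Df$. Your remark on the one-sided conventions at endpoints of $I$ is consistent with how both results are stated.
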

\begin{proof}
It follows directly from Theorem \ref{main theorem 1} and Proposition \ref{improvement from differentiable to class 1}.
\end{proof}

\section{Multiple-Valued Functions of Class $1$}


We will use the one-dimensional results of the previous section to prove that all possible definitions of multiple-valued functions of class $1$ are equivalent.
Our approach may be seen of a multiple-valued equivalent of proving that continuity of all partial derivatives implies continuity of the derivative map.

\begin{lemma}[Chain Rule]\label{chain rule}
Let $m,n,q\in\pinteger$, $\delta\in(0,\infty)$, $\Omega\subset\real^m$ be an open set, $f:\Omega\rightarrow\qspace{q}{\real^n}$ be a multiple-valued function and $\gamma:(-\delta,\delta)\rightarrow\Omega$ be a curve of class $1$.
If $f$ is affinely approximable at $\gamma(0)$ with
\begin{equation*}
\tfa{f}(\gamma(0))=\sum_{\alpha=1}^q\di{(y_\alpha,L_\alpha)},
\end{equation*}
then $h=f\circ\gamma$ is affinely approximable at $0$ with
\begin{equation*}
\tfa{h}(0)=\sum_{\alpha=1}^q\di{(y_\alpha,\langle \dot{\gamma}(0),L_\alpha\rangle)}.
\end{equation*}
In particular,
\begin{equation*}
Dh(0)=\up{D}^1_{\gamma(0)}f(\dot{\gamma}(0)).
\end{equation*}
If in addition $f$ is affinely approximable on an open neighbourhood of $\gamma(0)$ and $Df$ is continuous at $0$, then $h$ is affinely approximable on an open neighbourhood of $0$ and $Dh$ is continuous.
\end{lemma}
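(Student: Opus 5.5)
To obtain the formula for $\tfa{h}(0)$, I will compare $h(t)$ with $\up{P}^1_{\gamma(0)}f(\gamma(t))$ using the triangle inequality in $\fmetric$:
\begin{equation*}
\fmetric\bigl(h(t),\up{P}_{0,\Phi}(t)\bigr)\leq\fmetric\bigl(f(\gamma(t)),\pfa{\gamma(0)}{f}(\gamma(t))\bigr)+\fmetric\bigl(\pfa{\gamma(0)}{f}(\gamma(t)),\up{P}_{0,\Phi}(t)\bigr),
\end{equation*}
where $\Phi=\sum_{\alpha}\di{(y_\alpha,\langle\dot\gamma(0),L_\alpha\rangle)}$.

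For the first term, $\gamma$ is $C^1$, so $\|\gamma(t)-\gamma(0)\|\leq C|t|$ near $0$. Affine approximability of $f$ at $\gamma(0)$ then gives $o(\|\gamma(t)-\gamma(0)\|)=o(|t|)$. One caveat: if $\gamma(t)=\gamma(0)$ the term is simply zero.

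For the second term, I use the identity permutation, which gives
\begin{equation*}
\sum_\alpha\|\langle\gamma(t)-\gamma(0)-t\dot\gamma(0),L_\alpha\rangle\|\leq\sum_\alpha\|L_\alpha\|\,\|\gamma(t)-\gamma(0)-t\dot\gamma(0)\|=o(|t|)
\end{equation*}
by differentiability of $\gamma$. By the remark following the definition of the $1$-jet, this shows that $h$ is affinely approximable at $0$ with $\tfa{h}(0)=\Phi$. The formula $Dh(0)=\up{D}^1_{\gamma(0)}f(\dot\gamma(0))$ is then just the projection onto the linear components, as in Definition \ref{definition of derivative}.

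For the last assertion, I apply the first part at every $s$ near $0$, since $\gamma(s)$ stays in the neighbourhood where $f$ is affinely approximable. Strictly speaking, the first part was stated at $0$, so I apply it to the reparametrized curve $t\mapsto\gamma(s+t)$, which is still of class $1$. This gives $Dh(s)=\sum_\alpha\di{\langle\dot\gamma(s),L_\alpha(s)\rangle}$, where $Df(\gamma(s))=\sum_\alpha\di{L_\alpha(s)}$.

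Continuity at $s_0$ (I read ``$Df$ continuous'' as continuity on the neighbourhood, so at $\gamma(s_0)$) comes from estimating $\fmetric(Dh(s),Dh(s_0))$. Choose $\sigma$ realising $\fmetric(Df(\gamma(s)),Df(\gamma(s_0)))$. Then
\begin{equation*}
\fmetric(Dh(s),Dh(s_0))\leq\sum_\alpha\|\langle\dot\gamma(s),L_\alpha(s)\rangle-\langle\dot\gamma(s_0),L_{\sigma(\alpha)}(s_0)\rangle\|
\end{equation*}
\begin{equation*}
\leq\|\dot\gamma(s)\|\,\fmetric(Df(\gamma(s)),Df(\gamma(s_0)))+\|\dot\gamma(s)-\dot\gamma(s_0)\|\sum_\alpha\|L_\alpha(s_0)\|,
\end{equation*}
and both terms tend to $0$ as $s\to s_0$ by continuity of $\dot\gamma$, $\gamma$ and $Df$.

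The main subtlety I expect is notational rather than mathematical: keeping the jet base points correct. The first jet component must be $y_\alpha=A_\alpha(\gamma(0))$, with $\up{P}_{0,\Phi}$ centred at $0$. Beyond that, the argument is routine.
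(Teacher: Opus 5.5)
Your argument is correct and is essentially the paper's. The paper fixes an optimal permutation $\sigma_s$ for $\fmetric\bigl(f(\gamma(s)),\pfa{\gamma(0)}{f}(\gamma(s))\bigr)$ and splits the error into that term plus $\sum_\alpha\|L_\alpha\|\,\|\gamma(s)-\gamma(0)-s\dot\gamma(0)\|$, which is exactly your triangle-inequality decomposition. Your version is slightly more complete, since it handles the case $\gamma(t)=\gamma(0)$, where the paper divides by $\|\gamma(s)-\gamma(0)\|$. (That case relies on the convention $f(\gamma(0))=\sum_\alpha\di{y_\alpha}$, which the paper also uses implicitly.) You also supply the reparametrization and continuity estimate for the final assertion, which the paper's proof does not address.
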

\begin{proof}
Let $f_1,\ldots,f_q:\Omega\rightarrow \real^n$ be an arbitrary selection of $f$ and $L_1,\ldots,L_q\in\homspace{\real^m}{\real^n}$ be such that
\begin{equation*}
\tfa{f}(\gamma(0))=\sum_{\alpha=1}^q\di{(f_\alpha(\gamma(0)),L_\alpha)}.
\end{equation*}
For each $s\in(-\delta,\delta)$, take $\sigma_s\in\perm{q}$ to be a permutation satisfying
\begin{equation*}
\fmetric(f(\gamma(s),\pfa{\gamma(0)}{f}(\gamma(s)))=\sum_{\alpha=1}^q\|f_{\sigma_s(\alpha)}(\gamma(s))-f_\alpha(\gamma(0))-\langle \gamma(s)-\gamma(0),L_\alpha\rangle \|.
\end{equation*}
Define $\Phi\in\qspace{q}{\polyspace{1}{\real}{\real^n}}$ as
\begin{equation*}
\Phi=\sum_{\alpha=1}^q\di{(f_\alpha(0),\langle \dot{\gamma}(0),L_\alpha\rangle)}
\end{equation*}
and compute
\begin{equation*}
\begin{aligned}
|s|^{-1}\fmetric(h(s),\up{P}_{0,\Phi}(s)) & \leq \sum_{\alpha=1}^q|s|^{-1}\|h_{\sigma_s(\alpha)}(s)-f_\alpha(\gamma(0))-s\langle \dot{\gamma}(0),L_\alpha\rangle \|\\
                                           &\leq\sum_{\alpha=1}^q |s|^{-1} \|f_{\sigma_s(\alpha)}(\gamma(s))-f_\alpha(\gamma(0))-\langle \gamma(s)-\gamma(0),L_\alpha\rangle\|\\
																					 &\quad+ |s|^{-1}\|\langle\gamma(s)-\gamma(0)-s\dot{\gamma}(0),L_\alpha\rangle\|\\
																					\leq&\sum_{\alpha=1}^q \left\|\frac{\gamma(s)-\gamma(0)}{s}\right\|\|\gamma(s)-\gamma(0)\|^{-1}\fmetric(f(\gamma(s)),\pfa{\gamma(0)}{f}(\gamma(s)))\\
																					&\quad + \|L_\alpha\|_{\odot^1}\||s^{-1}|\|\gamma(s)-\gamma(0)-s\dot{\gamma}(0)\|
\end{aligned}
\end{equation*}
Since $\gamma$ is differentiable, it follows that the right hand side tends to $0$ as $s$ tends to $0$.
Thus proving that $\tfa{h}(0)=\Phi$.

\end{proof}

\begin{lemma}\label{lemma derivative}
Let $m,n,q\in\pinteger$, $\Omega\subset\real^m$ be an open set, $f,g:\Omega\rightarrow\qspace{q}{\real^n}$ be multiple-valued functions and $a,b\in\Omega$.
If $f,g$ are affinely approximable at $a,b$ respectively, then
\begin{equation*}
\fmetric_{\real^n}(\up{D}^1_af(v),\up{D}^1_bg(v))\leq\|v\|\fmetric_{\homomorphism{\real^m}{\real^n}}(Df(a),Dg(b)).
\end{equation*}
\end{lemma}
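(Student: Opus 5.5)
This is a direct consequence of the definition of $\fmetric$ as an infimum over permutations. The plan is to choose a permutation that is optimal for the derivatives and then evaluate both sides at $v$.

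Write $\dfa{f}{a}=\sum_{\alpha=1}^q\di{L_\alpha}$ and $\dfa{g}{b}=\sum_{\alpha=1}^q\di{M_\alpha}$ with $L_\alpha,M_\alpha\in\homomorphism{\real^m}{\real^n}$, as in Definition \ref{definition of derivative}. By definition,
\begin{equation*}
\up{D}^1_af(v)=\sum_{\alpha=1}^q\di{\langle v,L_\alpha\rangle}\quad\text{and}\quad\up{D}^1_bg(v)=\sum_{\alpha=1}^q\di{\langle v,M_\alpha\rangle}.
\end{equation*}
Since $\perm{q}$ is finite, the infimum defining $\fmetric_{\homomorphism{\real^m}{\real^n}}(\dfa{f}{a},\dfa{g}{b})$ is attained. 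Fix a minimizing permutation $\bar\sigma\in\perm{q}$, so that
\begin{equation*}
\fmetric(\dfa{f}{a},\dfa{g}{b})=\sum_{\alpha=1}^q\|L_\alpha-M_{\bar\sigma(\alpha)}\|_{\odot^1}.
\end{equation*}

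Next, test the infimum defining $\fmetric_{\real^n}(\up{D}^1_af(v),\up{D}^1_bg(v))$ with the same $\bar\sigma$. Linearity of evaluation gives $\langle v,L_\alpha\rangle-\langle v,M_{\bar\sigma(\alpha)}\rangle=\langle v,L_\alpha-M_{\bar\sigma(\alpha)}\rangle$. The operator norm bound gives $\|\langle v,L\rangle\|\leq\|v\|\,\|L\|_{\odot^1}$. Together these yield
\begin{equation*}
\fmetric_{\real^n}(\up{D}^1_af(v),\up{D}^1_bg(v))\leq\sum_{\alpha=1}^q\|\langle v,L_\alpha-M_{\bar\sigma(\alpha)}\rangle\|\leq\|v\|\sum_{\alpha=1}^q\|L_\alpha-M_{\bar\sigma(\alpha)}\|_{\odot^1},
\end{equation*}
and the right-hand side equals $\|v\|\,\fmetric(\dfa{f}{a},\dfa{g}{b})$.

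There is no real obstacle. The only point requiring care is that the permutation realizing the distance of the evaluated points may differ from $\bar\sigma$. This is harmless, because the left-hand side is an infimum and therefore only an upper bound is needed. This contrasts with the reverse inequality, which would genuinely require the sampling arguments of Lemma \ref{multivalued combinatorial lemma}.
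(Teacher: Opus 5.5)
Your proof is correct and is the paper's own argument: fix a permutation realizing $\fmetric(\dfa{f}{a},\dfa{g}{b})$, use it as a competitor in the infimum for the evaluated points, and apply the operator-norm bound $\|\langle v,L\rangle\|\leq\|v\|\,\|L\|_{\odot^1}$. Your closing aside is slightly loose, though it does not affect the proof: for a single fixed $v$ the reverse inequality is simply false. What Lemma \ref{multivalued combinatorial lemma} supplies is a reverse bound, up to a constant, in terms of the supremum over a sufficiently dense set of sample points.
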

\begin{proof}
Suppose $L_\alpha,M_\alpha\in\homspace{\real^m}{\real^n}$ for $\alpha=1,\ldots,q$ are such that
\begin{equation*}
Df(a)=\sum_{\alpha=1}^q\di{L_\alpha} \text{ and } Dg(b)=\sum_{\alpha=1}^q\di{M_\alpha}.
\end{equation*}
Take $\sigma\in\Pi_q$ satisfying
\begin{equation*}
\fmetric(Df(a),Dg(b))=\sum_{\alpha=1}^q\|L_\alpha-M_{\sigma(\alpha)}\|_{\odot^1}.
\end{equation*}
Hence,
\begin{equation*}
\begin{aligned}
\fmetric_{\real^n}(\up{D}^1_af(v),\up{D}^1_bg(v)) & \leq \sum_{\alpha=1}^q\|\langle v,L_\alpha\rangle-\langle v,M_{\sigma(\alpha)}\rangle\|\\
                                    &\leq \|v\|\sum_{\alpha=1}^q\|L_\alpha - M_{\sigma(\alpha)}\|_{\odot^1}\\
																		& = \|v\|\fmetric(Df(a),Dg(b)).
\end{aligned}
\end{equation*}
\end{proof}

The next Lemma may be compared with \cite{almgren2000}*{Proposition 1.10} when the domain is one-dimensional.
We make rougher estimates and obtain a different constant but we also obtain a finer intermediary result.

\begin{lemma}\label{continuous selection estimate}
Let $m,n,q\in\pinteger$ and let $X\subset\real^m$ be a connected set.
Suppose $g,h:X\rightarrow (\real^n)^q$ are continuous selections for $G,H:X\rightarrow\qspace{q}{\real^n}$ respectively.
Then, for any $\alpha,\beta\in\{1,\ldots,q\}$ and $a,b\in X$ we have
\begin{equation*}
\begin{aligned}
\|h_\beta(b)-g_\alpha(a)\|\leq & 2(q!)\sup\{\fmetric(H(x),G(a)):x\in X\}\\
                             & \;+\|h_\beta(a)-g_\alpha(a)\|.
\end{aligned}
\end{equation*}
\end{lemma}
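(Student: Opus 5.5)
The plan is to prove the stronger intermediate bound
\begin{equation*}
\|h_\beta(b)-h_\beta(a)\|\leq 2q\,\varepsilon,\qquad \varepsilon=\sup\{\fmetric(H(x),G(a)):x\in X\},
\end{equation*}
and then finish with the triangle inequality $\|h_\beta(b)-g_\alpha(a)\|\leq\|h_\beta(b)-h_\beta(a)\|+\|h_\beta(a)-g_\alpha(a)\|$, together with $q\leq q!$. The selection $g$ enters only in this last step. The only relevant information about $G$ is the finite set $\support{G(a)}$, which has at most $q$ points. We may assume $\varepsilon<\infty$, since otherwise there is nothing to prove.

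\textbf{Step 1: the curve stays near $\support{G(a)}$.} For each $x\in X$, the definition of $\fmetric$ gives a permutation realizing $\fmetric(H(x),G(a))\leq\varepsilon$. Every summand is bounded by the total, so each point of $\support{H(x)}$ lies within distance $\varepsilon$ of some point of $\support{G(a)}$. In particular this holds for $h_\beta(x)$. Hence
\begin{equation*}
h_\beta(X)\subset U=\bigcup_{y\in\support{G(a)}}\closedball{y}{\varepsilon}.
\end{equation*}

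\textbf{Step 2: connectedness.} Define a graph on $\support{G(a)}$ by joining $y$ and $y'$ whenever $\closedball{y}{\varepsilon}\cap\closedball{y'}{\varepsilon}\neq\emptyset$. Each graph component $\mathcal{K}$ gives a closed set $U_{\mathcal{K}}=\bigcup_{y\in\mathcal{K}}\closedball{y}{\varepsilon}$. These finitely many closed sets are pairwise disjoint and cover $U$, so $U_{\mathcal{K}}\cap h_\beta(X)$ is relatively clopen in $h_\beta(X)$ for each $\mathcal{K}$. The set $h_\beta(X)$ is connected, being the continuous image of the connected set $X$, so it lies in a single $U_{\mathcal{K}}$. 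This also covers the case $\varepsilon=0$: the balls degenerate to points and $h_\beta$ is then constant.

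\textbf{Step 3: diameter bound.} Suppose $\mathcal{K}$ has $k\leq q$ points, and take $z\in\closedball{y}{\varepsilon}$, $z'\in\closedball{y'}{\varepsilon}$ with $y,y'\in\mathcal{K}$. Join $y$ to $y'$ by a path in the graph with at most $k-1$ edges; consecutive centers along it are within $2\varepsilon$ of each other. This gives
\begin{equation*}
\|z-z'\|\leq\varepsilon+2\varepsilon(k-1)+\varepsilon=2k\varepsilon\leq 2q\varepsilon.
\end{equation*}
Applying this with $z=h_\beta(b)$ and $z'=h_\beta(a)$ yields the intermediate bound, and the triangle inequality above completes the proof.

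The main point requiring care is Step 2. The separation argument must use the graph components rather than individual balls, since overlapping balls are not separated from each other. This chaining is exactly what produces the factor $q$, which is dominated by the stated $q!$.
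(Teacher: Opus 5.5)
Your argument is correct, and it takes a genuinely different route from the paper's proof.

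\textbf{The paper's route.} The paper works in the domain. It decomposes $X$ into the closed sets $X_\sigma$ on which a fixed permutation $\sigma$ realizes $\fmetric(H(x),G(a))$. Connectedness of $X$ then links $X_{\sigma_0}\ni a$ to some $X_\sigma\ni b$ through a chain of pairwise intersecting sets $X_{\sigma_1},\ldots,X_{\sigma_k}$. Finally it telescopes the triangle inequality through the points $g_{\sigma_i(\beta)}(a)$ and $h_\beta(x_i)$, with $x_i\in X_{\sigma_{i-1}}\cap X_{\sigma_i}$. The chain may pass through up to $q!-1$ permutations, which is exactly where the factor $2(q!)$ comes from. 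The closedness of the $X_\sigma$ requires continuity of all branches of $h$.

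\textbf{Your route.} You work in the target instead. You drop the matchings after noting that every value $h_\beta(x)$ lies within $\varepsilon$ of the finite set $\support{G(a)}$. You then apply connectedness to the image $h_\beta(X)$, which must stay inside a single cluster of overlapping $\varepsilon$-balls. The chaining therefore runs over at most $\cardinality{\support{G(a)}}\leq q$ centers rather than over permutations.

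\textbf{What this buys.} Your argument is shorter and needs continuity of the single branch $h_\beta$ only. It also gives the sharper constant $2q$ in place of $2(q!)$. Your intermediate bound $\|h_\beta(b)-h_\beta(a)\|\leq 2q\varepsilon$ is exactly the form in which the lemma is used in the proof of Lemma~\ref{oscilation lemma}, where $g=h$, $G=H$ and $\alpha=\beta$. There it would improve the constant $C_1$ from $2q(q!)$ to $2q^{2}$. For the stated inequality the paper's permutation bookkeeping buys nothing beyond what your argument gives; it is simply the source of the factorial.
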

\begin{proof}
For each $\sigma\in\Pi_q$ we define
\begin{equation*}
X_\sigma=\{x\in X:\fmetric(H(x),G(a))=\sum_{\alpha=1}^q\|h_\alpha(x)-g_{\sigma(\alpha)}(a)\|\}
\end{equation*}
It follows that
\begin{enumerate}[(i)]
\item there exists $\sigma_0\in\Pi_q$ such that $a\in X_{\sigma_0}$ and
\item $X_\sigma$ is closed for all $\sigma\in\perm{q}$.
\end{enumerate}
The second property follows from continuity of $h$ and $g$.
\begin{claim}
Either $X_{\sigma_0}= X$ or for each $\sigma\neq\sigma_0$ such that $X_\sigma\neq\emptyset$ there exists a finite sequence of pairwise distinct permutations $\sigma_1,\ldots,\sigma_k\in\Pi_q$ such that $\sigma_k=\sigma$, $\sigma_i\neq\sigma_0$ and $X_{\sigma_{i-1}}\cap X_{\sigma_i}\neq\emptyset$ for all $i=1,\ldots,k$.
\end{claim}
Indeed, define
\begin{equation*}
\begin{aligned}
&\mathcal{A}=\bigcup\{X_{\sigma'}:X_{\sigma'}\neq\emptyset \text{ and the conclusion of the claim is true for }\sigma'\} \text{ and }\\
&\mathcal{B}=\bigcup\{X_{\sigma'}:\sigma'\neq\sigma_0, X_{\sigma'}\neq\emptyset \text{ and the conclusion of the claim is false for }\sigma'\}.
\end{aligned}
\end{equation*}
It follows from property $(ii)$ and finiteness of $\Pi_q$ that both $\mathcal{A}$ and $\mathcal{B}$ are closed.
Since $X=(\mathcal{A}\cup X_{\sigma_0})\cup\mathcal{B}$ and $\mathcal{A}\cup X_{\sigma_0}\neq\emptyset$, we conclude from connectedness of $X$ that $\mathcal{B}=\emptyset$, which proves the claim.

In particular, $b\in X_\sigma$ for some $\sigma\in\Pi_q$ so we can find a sequence $\sigma_1,\ldots,\sigma_k\in\Pi_q$ as in the claim and $x_i\in X_{\sigma_{i-1}}\cap X_{\sigma_i}$ for each $i=1,\ldots, k$.
We compute
\begin{equation*}
\begin{aligned}
\|h_\beta(b)-g_\alpha(a)\| & \leq \|h_\beta(b)-g_{\sigma_k(\beta)}(a)\|\\
                         & \quad +\sum_{i=1}^k\|g_{\sigma_i(\beta)}(a)-h_\beta(x_i)\|+\|h_\beta(x_i)-g_{\sigma_{i-1}(\beta)}(a)\|\\
                         & \quad +\|g_{\sigma_0(\beta)}(a)-h_\beta(a)\|+\|h_\beta(a)-g_\alpha(a)\|\\
												 & = \fmetric(H(b),G(a))+2\sum_{i=1}^k\fmetric(H(x_i),G(a))\\
												 & \quad +\fmetric(H(a),G(a))+\|h_\beta(a)-g_\alpha(a)\|.
\end{aligned}
\end{equation*}
Note that $\sigma_{i}\neq\sigma_0$ for all $i=1,\ldots,k$ so that $k\leq q!-1$, which concludes the proof.
\end{proof}

Finally, we prove an oscilation lemma that will play the analogous role of the mean value theorem in the multiple-valued case.
This is not quite trivial to generalize since there is no definition of $Df$ as limits of ratios even when the domain is one-dimensional.
We recall the notation $\overrightarrow{[a,b]}=\{a+t(b-a)\in \real^n:t\in[0,1]\}$.

\begin{lemma}[Oscilation Lemma]\label{oscilation lemma}
Let $m,n,q\in\pinteger$, $\Omega\subset\real^m$ be an open set, $a\in\Omega$, $r\in(0,\infty)$ and $f:\Omega\rightarrow\qspace{q}{\real^n}$ be a multiple-valued function.
Suppose that $\closedball{a}{r}\subset\Omega$ and $f$ is affinely approximable in $\closedball{a}{r}$.
If $Df$ is continuous on $\closedball{a}{r}$, then there exists $C_1(q)\in(0,\infty)$ such that
\begin{equation*}
\fmetric(f(b),\pfa{a}{f}(b))\leq C_1\|b-a\|\sup\{\fmetric(Df(x),Df(a)):x\in\overrightarrow{[a,b]}\}
\end{equation*}
for all $b\in\closedball{a}{r}$.
\end{lemma}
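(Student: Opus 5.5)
We reduce the estimate to a one-dimensional problem along the segment $\overrightarrow{[a,b]}$, and there we apply the class $1$ selection of Corollary \ref{existence of selection of class 1} together with the classical mean value inequality. Fix $b\in\closedball{a}{r}$ with $b\neq a$, put $v=b-a$ and $\gamma(t)=a+tv$ for $t$ in a small open interval containing $[0,1]$ (possible since $\Omega$ is open and $\closedball{a}{r}\subset\Omega$), and let $h=f\circ\gamma$. By the Chain Rule (Lemma \ref{chain rule}), $h$ is affinely approximable on $[0,1]$ with
\begin{equation*}
\tfa{h}(t)=\sum_{\alpha=1}^q\di{(y_\alpha(t),\langle v,L_\alpha(t)\rangle)}, \qquad Dh(t)=\up{D}^1_{\gamma(t)}f(v),
\end{equation*}
where $\tfa{f}(\gamma(t))=\sum_\alpha\di{(y_\alpha(t),L_\alpha(t))}$. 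By Lemma \ref{lemma derivative}, $\fmetric(Dh(t),Dh(s))\leq\|v\|\fmetric(Df(\gamma(t)),Df(\gamma(s)))$, so $Dh$ is continuous on $[0,1]$. Corollary \ref{existence of selection of class 1} then yields a class $1$ selection $g=(g_\alpha)$ of $h$ on $[0,1]$. By Lemma \ref{differentiability is preserved}, $(Dg_\alpha(t))_\alpha$ is a selection of $Dh(t)$.

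Next we compare with the Taylor polynomial. We have $\pfa{a}{f}(b)=\sum_\alpha\di{y_\alpha(0)+\langle v,L_\alpha(0)\rangle}$, which is the evaluation at $t=1$ of the affine $q$-valued function $t\mapsto\sum_\alpha\di{g_\alpha(0)+t\,Dg_\alpha(0)}$, since $\tfa{h}(0)=\sum_\alpha\di{(g_\alpha(0),Dg_\alpha(0))}$. Hence
\begin{equation*}
\fmetric(f(b),\pfa{a}{f}(b))\leq\sum_\alpha\|g_\alpha(1)-g_\alpha(0)-Dg_\alpha(0)\|\leq\sum_\alpha\sup_{t\in[0,1]}\|Dg_\alpha(t)-Dg_\alpha(0)\|
\end{equation*}
by the mean value inequality applied to each single-valued $C^1$ map $g_\alpha(t)-tDg_\alpha(0)$.

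The main obstacle is that $\|Dg_\alpha(t)-Dg_\alpha(0)\|$ is a coordinatewise quantity, not an $\fmetric$-distance: the selection may assign branches in a permuted order relative to the optimal matching at $0$. This is exactly the situation of Lemma \ref{continuous selection estimate}. Apply it with $X=[0,1]$ (connected), $G=H=Dh$, continuous selections $g'=h'=(Dg_\alpha)$, base point $0$ and $\alpha=\beta$. This gives
\begin{equation*}
\|Dg_\alpha(t)-Dg_\alpha(0)\|\leq 2(q!)\sup\{\fmetric(Dh(s),Dh(0)):s\in[0,1]\}.
\end{equation*}
Summing over $\alpha$ and using Lemma \ref{lemma derivative} to bound $\fmetric(Dh(s),Dh(0))\leq\|b-a\|\fmetric(Df(\gamma(s)),Df(a))$, we obtain the claim with $C_1=2q\,(q!)$. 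The only delicate points to check are the continuity hypotheses: $Dh$ must be continuous on a neighbourhood interval so that Corollary \ref{existence of selection of class 1} applies, and the selection must be continuous so that Lemma \ref{continuous selection estimate} applies. Both follow from the continuity of $Df$ on $\closedball{a}{r}$; if needed, we restrict to the closed interval $[0,1]$, where the Corollary still applies.
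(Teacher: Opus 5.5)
Your proposal is correct and follows the paper's proof essentially step for step: restrict $f$ to the segment via Lemma \ref{chain rule}, take the class $1$ selection from Corollary \ref{existence of selection of class 1}, compare with the linear extension of that selection at $0$, and control $\|Dg_\alpha(t)-Dg_\alpha(0)\|$ by Lemma \ref{continuous selection estimate} followed by Lemma \ref{lemma derivative}, arriving at the same constant $C_1=2q(q!)$. Your deviations are harmless and slightly tidier. You work directly on $[0,1]$ instead of $[-\delta,\delta]$, which avoids the paper's degenerate choice $\delta=1$ when $\|b-a\|=r$. You use the mean value inequality in place of the integral. You derive continuity of $Dh$ explicitly from Lemma \ref{lemma derivative} rather than from the unproved last clause of Lemma \ref{chain rule}.
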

\begin{proof}
We may assume that $b\neq a$, otherwise the result is trivial.
Define 
\begin{equation*}
H(t)=f(a+t(b-a)) \text{ for }t\in\left(-\frac{r}{|b-a|},\frac{r}{|b-a|}\right).
\end{equation*}
It follows from Lemma \ref{chain rule} that $H$ is affinely approximable and $DH$ is continuous.
We write $\delta=\frac{r+|b-a|}{2|b-a|}\in(1,\frac{r}{|b-a|})$ and let $h:[-\delta,\delta]\rightarrow(\real^n)^q$ be the selection function of class $1$ for $H$ in $[-\delta,\delta]$ given by Corollary \ref{existence of selection of class 1}.
In particular, $(Dh_\alpha)_{\alpha=1,\ldots,q}$ is a continuous selection for $DH$

Next we define 
\begin{equation*}
g_\alpha(t)=h_\alpha(0)+\langle t,Dh_\alpha(0)\rangle \text{ and } G(t)=\sum_{\alpha=1}^q\di{g_\alpha(t)} \text{ for }t\in[-\delta,\delta].
\end{equation*}
We note that
\begin{enumerate}[(i)]
\item $H(1)=f(b)$;
\item $DH(s)=\up{D}^1_{a+s(b-a)}f(b-a)$;
\item $G(1)=\pfa{a}{f}(b)$ and
\item $Dg_\alpha(t)=Dh_\alpha(0)$ for all $\alpha=1,\ldots,q$.
\end{enumerate}
Use the above properties to compute
\begin{equation*}
\begin{aligned}
\fmetric(f(b),\pfa{a}{f}(b)) & = \fmetric(H(1),G(1))\\
                            & \leq \sum_{\alpha=1}^q\|h_\alpha(1)-g_\alpha(1)\|\\
														& = \sum_{\alpha=1}^q\|(h_\alpha-g_\alpha)(1)-(h_\alpha-g_\alpha)(0)\|\\
														& = \sum_{\alpha=1}^q\left\|\int_0^1 Dh_\alpha(\tau)-Dg_\alpha(\tau)\integrald\tau\right\|\\
														& \leq \sum_{\alpha=1}^q\sup\{\|Dh_\alpha(\tau)-Dg_\alpha(\tau)\|:\tau\in[0,1]\}\\
														& = \sum_{\alpha=1}^q\sup\{\|Dh_\alpha(\tau)-Dh_\alpha(0)\|:\tau\in[0,1]\}\\
														& \leq 2q(q!)\sup\{\fmetric(DH(\tau),DH(0)):\tau\in[0,1]\}\\
														& = 2q(q!)\sup\{\fmetric(\up{D}^1_{a+\tau(b-a)}f(b-a),\up{D}^1_{a}f(b-a)):\tau\in[0,1]\}\\
														& \leq 2q(q!)\|b-a\|\sup\{\fmetric(Df(x),Df(a)):x\in\overrightarrow{[a,b]}\},
\end{aligned}
\end{equation*}
where on the second to last line we used Lemma \ref{continuous selection estimate} and on the final line we used Lemma \ref{lemma derivative}.
\end{proof}

We will now prove the main theorem.

\begin{theorem}\label{main theorem}
Let $m,n,q\in\pinteger$, $\Omega\subset\real^m$ be an open set and $f:\Omega\rightarrow\qspace{q}{\real^n}$ be a multiple-valued function.
The following statements are equivalent:
\begin{enumerate}[(1)]
\item $f$ is affinely approximable in $\Omega$ and $Df:\Omega\rightarrow\qspace{q}{\homspace{\real^m}{\real^n}}$ is continuous;
\item There exists $\Phi:\Omega\rightarrow\qspace{q}{\polyspace{1}{\real^m}{\real^n}}$ such that
\begin{enumerate}[(a)]
\item $\up{P}_{a,\Phi(a)}(a)=f(a)$ for all $a\in\Omega$ and
\item For all $K\subset\Omega$ compact set and $\varepsilon\in(0,\infty)$ there exists $\delta\in(0,\infty)$ such that if $a,b\in K$ satisfy $\|b-a\|<\delta$, then
\begin{equation*}
\fmetric(\up{P}_{a,\Phi(a)}(x),\up{P}_{b,\Phi(b)}(x))\leq\varepsilon\|b-a\|
\end{equation*}
for all $x\in\openball{a}{\|b-a\|}$.
\end{enumerate}
\item $f$ is affinely approximable in $\Omega$ and $\tfa{f}:\Omega\rightarrow\qspace{q}{\polyspace{1}{\real^m}{\real^n}}$ is continuous;
\end{enumerate}
\end{theorem}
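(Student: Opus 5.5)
I would prove the cycle $(1)\Rightarrow(2)\Rightarrow(3)\Rightarrow(1)$, taking $\Phi=\tfa{f}$ throughout. Condition (2)(a) then holds trivially, since $\up{P}_{a,\tfa{f}(a)}(a)=\sum_\alpha\di{A_\alpha(a)}=f(a)$.

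\textbf{Step $(1)\Rightarrow(2)$.} Fix a compact $K$ and $\varepsilon>0$. Choose $r_0$ with $\closedball{K}{3r_0}\subset\Omega$. By uniform continuity of $Df$ on $\closedball{K}{3r_0}$, pick $\delta\le r_0$ such that $\fmetric(Df(x),Df(y))\le\varepsilon'$ whenever $\|x-y\|<3\delta$, where $\varepsilon'$ is a small multiple of $\varepsilon$ depending only on $q$. Now take $a,b\in K$ with $\rho=\|b-a\|<\delta$ and $x\in\openball{a}{\rho}$, so that $\|x-b\|<2\rho$. By the triangle inequality,
\begin{equation*}
\fmetric(\pfa{a}{f}(x),\pfa{b}{f}(x))\le\fmetric(\pfa{a}{f}(x),f(x))+\fmetric(f(x),\pfa{b}{f}(x)).
\end{equation*}
Apply the Oscilation Lemma \ref{oscilation lemma}, centred at $a$ for the first term and at $b$ for the second, on balls of radius about $2\rho$. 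This bounds the right-hand side by $C_1(\rho+2\rho)\varepsilon'$. With $\varepsilon'=\varepsilon/(3C_1)$ this gives (2)(b).

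\textbf{Step $(2)\Rightarrow(3)$.} First I show $f$ is affinely approximable at $a$ with $\tfa{f}(a)=\Phi(a)$. For $b$ near $a$, (2)(a) gives $f(b)=\up{P}_{b,\Phi(b)}(b)$. Hence
\begin{equation*}
\fmetric(f(b),\up{P}_{a,\Phi(a)}(b))=\fmetric(\up{P}_{b,\Phi(b)}(b),\up{P}_{a,\Phi(a)}(b)).
\end{equation*}
However, $b$ lies on the boundary of $\openball{a}{\|b-a\|}$, not inside it. I handle this either by passing to the limit $x\to b$ (both polynomials are continuous in $x$), or by applying (2)(b) with the roles of $a$ and $b$ swapped. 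Either way the quantity is at most $\varepsilon\|b-a\|$, which gives affine approximability with jet $\Phi(a)$, by the Remark following the definition of the $1$-jet.

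Continuity of $\tfa{f}=\Phi$ is the key use of the combinatorial machinery. I apply Lemma \ref{multivalued combinatorial lemma} with $k=1$, $c=a$, $r=\|b-a\|/2$, and $X=\openball{a}{r}$, which trivially satisfies the density hypothesis. The first inequality there bounds
\begin{equation*}
\inf_\sigma\sum_\alpha\bigl(\|\phi_\alpha(0)-\psi_{\sigma(\alpha)}(a-b)\|+r\|D\phi_\alpha-D\psi_{\sigma(\alpha)}\|\bigr)\le\Gamma_3\varepsilon\|b-a\|.
\end{equation*}
Since $r\|D\phi_\alpha-D\psi_{\sigma(\alpha)}\|\le\Gamma_3\varepsilon\|b-a\|$ and $r=\|b-a\|/2$, the linear parts differ by at most $2\Gamma_3\varepsilon$. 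The constant parts differ by at most $\|\psi_{\sigma(\alpha)}(a-b)-\psi_{\sigma(\alpha)}(0)\|+\Gamma_3\varepsilon\|b-a\|$. The first of these terms is bounded by $\|D\psi_{\sigma(\alpha)}\|\,\|b-a\|$. These norms are locally bounded: applying the same estimate with fixed $\varepsilon=1$ along a chain from a base point shows $\Phi$ is locally bounded. Hence, under the single permutation $\sigma$, $\fmetric(\Phi(a),\Phi(b))\to0$ as $b\to a$.

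\textbf{Step $(3)\Rightarrow(1)$.} This is immediate, because $Df=\iota_\sharp Af$ and $\iota$ is $1$-Lipschitz. The pushforward $\iota_\sharp$ does not increase $\fmetric$, and $\tfa{f}$ and $Af$ differ by the continuous change of variables $(A(a),L)\leftrightarrow A$, i.e.\ $A(0)=A(a)-\langle a,L\rangle$. So continuity of $\tfa{f}$ gives continuity of $Af$ and hence of $Df$.

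I expect the main obstacle to be the exact matching between the open ball in (2)(b) and the application of Lemma \ref{multivalued combinatorial lemma}, together with the local boundedness of $D\Phi$. That boundedness is needed to compare constant terms evaluated at different centres, and it requires care because only a single permutation is available.
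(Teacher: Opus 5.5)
Your proposal is correct and follows essentially the same route as the paper: $\Phi=\tfa{f}$, the Oscilation Lemma plus the triangle inequality for $(1)\Rightarrow(2)$, Lemma \ref{multivalued combinatorial lemma} applied on a ball centred at $a$ for $(2)\Rightarrow(3)$, and a trivial $(3)\Rightarrow(1)$. The differences are minor. The paper takes the moving point as the centre of the ball, so the shifted polynomial is the one at the fixed point and no local bound on linear parts is needed. In your orientation that bound follows in one step from your own estimate, $\sum_\alpha\|D\psi_\alpha\|\le\sum_\alpha\|D\phi_\alpha\|+2\Gamma_3\varepsilon$, so no chaining is required. Your handling of the boundary point, by swapping the roles of $a$ and $b$ in (2)(b), is correct.
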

\begin{proof}
We begin by proving that $(1)\Rightarrow (2)$ we first define $\Phi=\tfa{f}$ so that 
\begin{equation*}
\up{P}_{a,\Phi(a)}=\pfa{a}{f}
\end{equation*}
and property $(2)(a)$ follows trivially.

To prove property $(2)(b)$, let $K\subset\Omega$ be a compact set and fix $\varepsilon>0$.
Since $Df$ is continuous there exists $\delta>0$ such that for all $z\in K$ and $y\in\closedball{z}{2\delta}$ we have
\begin{equation*}
\fmetric(Df(y),Df(z))<\frac{\varepsilon}{3C_1},
\end{equation*}
where $C_1$ is the constant given by Lemma \ref{oscilation lemma}.
If $b,a\in K$ satisfy $\|b-a\|<\delta$ and $x\in\openball{a}{\|b-a\|}$, then $\|x-b\|<2\|b-a\|<2\delta$ and Lemma \ref{oscilation lemma} implies
\begin{equation*}
\begin{aligned}
\fmetric(\pfa{b}{f}(x),\pfa{a}{f}(x)) & \leq \fmetric(\pfa{b}{f}(x),f(x))+\fmetric(f(x),\pfa{a}{f}(x))\\
                                  & \leq C_1\|x-b\|\sup\{\fmetric(Df(x),Df(b)):x\in\closedball{b}{\|x-b\|}\}\\
																	&\quad +C_1\|x-a\|\sup\{\fmetric(Df(x),Df(a)):x\in\closedball{a}{\|x-a\|}\}\\
																	& \leq C_1\|b-a\|\sup\{\fmetric(Df(x),Df(b)):x\in\closedball{b}{2\delta}\}\\
																	& \quad +C_12\|b-a\|\sup\{\fmetric(Df(x),Df(a)):x\in\closedball{a}{\delta}\}\\
																	& \leq \|b-a\|\varepsilon.
\end{aligned}
\end{equation*}

Let us prove that $(2)\Rightarrow(3)$.
Let $a\in\Omega$ be arbitrary and $x\in\Omega$ be sufficiently close to $a$.
It follows from (2)(a) that
\begin{equation*}
\|x-a\|^{-1}\fmetric(f(x),\up{P}_{a,\Phi(a)}(x))=\|x-a\|^{-1}\fmetric(\up{P}_{x,\Phi(x)}(x),\up{P}_{a,\Phi(a)}(x)).
\end{equation*}
Property (2)(b) implies that $f$ is affinely approximable at $a$ and $\tfa{f}(a)=\Phi(a)$.

It remains to prove that $\tfa{f}$ is continuous.
Fix $b\in \Omega$ and $\varepsilon>0$.
Let $\delta_0>0$ be sufficiently small so that $\closedball{b}{2\delta_0}\subset\Omega$, $K=\closedball{b}{2\delta_0}$ and let $\Gamma_3>0$ be given by \ref{multivalued combinatorial lemma} and $\delta_1>0$ be given by property $(2)(b)$ with respect to $K$ and $\frac{\varepsilon}{2\Gamma_3}$.
First we write
\begin{equation*}
\Phi(b)=\sum_{\alpha=1}^q\di{\psi_\alpha}
\end{equation*}
with $\psi_\alpha=(y_\alpha,L_\alpha)$ and take $\delta_2>0$ sufficiently small depending on $b$ so that
\begin{equation*}
\sum_{\alpha=1}^q\|\langle a-b,L_\alpha\rangle\|\leq\frac{\varepsilon}{2}, \text{ whenever }\|b-a\|<\delta_2.
\end{equation*}
Now, take $a\in K$ satisfying $\|b-a\|<\min\{\delta_0,\delta_1,\delta_2,1\}$ and write
\begin{equation*}
\Phi(a)=\sum_{\alpha=1}^q\di{\phi_\alpha}.
\end{equation*}
Recall that for any $x\in\real^m$,
\begin{equation*}
D^0\psi_\alpha(x)=\psi_{\alpha}(0)+\langle x,L_\alpha\rangle \text{ and } D^1\psi_\alpha(x)=D^1\phi_\alpha(0).
\end{equation*}
We compute:
\begin{equation*}
\begin{aligned}
\fmetric(\Phi(b),\Phi(a))& =\inf\{\sum_{\alpha=1}^q\|\phi_\alpha-\psi_{\sigma(\alpha)}\|_{\mathbb{P}^1}:\sigma\in\perm{q}\}\\
                   & = \inf\{\sum_{\alpha=1}^q\sum_{j=0}^1\|D^j\phi_\alpha(0)-D^j\psi_{\sigma(\alpha)}(0)\|_{\odot^j}:\sigma\in\perm{q}\}\\
									 & = \inf\{\sum_{\alpha=1}^q\|D^0\phi_\alpha(0)-\psi_{\sigma(\alpha)}(0)\|_{\odot^0}\\
									 & \quad +\|D^1\phi_\alpha(0)-D^1\psi_{\sigma(\alpha)}(0)\|_{\odot^1}:\sigma\in\perm{q}\}\\
									& = \inf\{\sum_{\alpha=1}^q\|D^0\phi_{\alpha}{(0)}-\psi_{\sigma(\alpha)}(0)\pm\langle a-b,L_{\sigma(\alpha)}\rangle\|_{\odot^0}\\
									&\quad +\|D^1\phi_\alpha(0)-D^1\psi_{\sigma(\alpha)}(a-b)\|_{\odot^1}:\sigma\in\perm{q}\}\\
									& \leq \inf\{\sum_{\alpha=1}^q\|\langle a-b,L_\alpha\rangle\|_{\odot^0}\\
									&\quad +\sum_{j=0}^1\|D^j\phi_\alpha(0)-D^j\psi_{\sigma(\alpha)}(a-b)\|_{\odot^j}:\sigma\in\perm{q}\}\\
									 \leq \frac{\varepsilon}{2}+\inf\{&\sum_{\alpha=1}^q\sum_{j=0}^1\|D^j\phi_\alpha(0)-D^j\psi_{\sigma(\alpha)}(a-b)\|_{\odot^j}:\sigma\in\perm{q}\}\\
									= \frac{\varepsilon}{2}+\|b-a\|^{-1}\inf\{&\sum_{\alpha=1}^q\sum_{j=0}^1\|b-a\|\|D^j\phi_\alpha(0)-D^j\psi_{\sigma(\alpha)}(a-b)\|_{\odot^j}:\sigma\in\perm{q}\}\\
									 \leq \frac{\varepsilon}{2}+\|b-a\|^{-1}\inf\{&\sum_{\alpha=1}^q\sum_{j=0}^1\|b-a\|^j\|D^j\phi_\alpha(0)-D^j\psi_{\sigma(\alpha)}(a-b)\|_{\odot^j}:\sigma\in\perm{q}\}\\
									 \leq \frac{\varepsilon}{2}+ \Gamma_3\|b-a\|^{-1}&\sup\{\fmetric(\up{P}_{a,\Phi(a)}(x),\up{P}_{b,\Phi(b)}(x)):x\in\openball{a}{\|b-a\|}\}\\
									& \leq \varepsilon
\end{aligned}
\end{equation*}
where in the second to last line we used Lemma \ref{multivalued combinatorial lemma} with $X=\openball{a}{\|b-a\|}$ and in the final line we used property $(2)(b)$.

The case $(3)\Rightarrow (1)$ is trivial and concludes the proof.
\end{proof}

In view of the previous Theorem any of the above properties may be used as a definition of multiple-valued functions of class $1$.

\begin{definition}
Let $m,n,q\in\pinteger$, $\Omega\subset\real^m$ and $f:\Omega\rightarrow\qspace{q}{\real^n}$ be a multiple-valued function.
We say that $f$ is of class $1$ if and only if $f$ is affinely approximable at every point of $\Omega$ and $Df:\Omega\rightarrow\qspace{q}{\homspace{\real^m}{\real^n}}$ is continuous.
\end{definition}

With very little modification in the above we can prove the H\"older differentiable version.

\begin{theorem}
Let $m,n,q\in\pinteger$, $\lambda\in(0,1]$, $\Omega\subset\real^m$ be an open set and $f:\Omega\rightarrow\qspace{q}{\real^n}$ be a multiple-valued function.
The following statements are equivalent:
\begin{enumerate}[(1)]
\item $f$ is of class $1$ and $Df:\Omega\rightarrow\qspace{q}{\homspace{\real^m}{\real^n}}$ is locally $\lambda$-H\"older continuous;
\item There exists $\Phi:\Omega\rightarrow\qspace{q}{\polyspace{1}{\real^m}{\real^n}}$ such that
\begin{enumerate}[(a)]
\item $\up{P}_{a,\Phi(a)}(a)=f(a)$ for all $a\in\Omega$ and
\item For all $K\subset\Omega$ compact set there exists $C\in(0,\infty)$ such that if $a,b\in K$, then
\begin{equation*}
\fmetric(\up{P}_{a,\Phi(a)}(x),\up{P}_{b,\Phi(b)}(x))\|b-a\|^{-1-\lambda}<C
\end{equation*}
for all $x\in\openball{a}{\|b-a\|}$.
\end{enumerate}
\item $f$ is affinely approximable in $\Omega$, $\tfa{f}:\Omega\rightarrow\qspace{q}{\polyspace{1}{\real^m}{\real^n}}$ is continuous and for every $a\in\Omega$ there exist $\delta,C\in(0,\infty)$ such that if $b\in\openball{a}{\delta}$ then
\begin{equation*}
\inf\{\sum_{\alpha=1}^q\frac{\|y_\alpha(b)-y_{\sigma(\alpha)}(a)\|}{\|b-a\|}+\frac{\|L_\alpha(b)-L_{\sigma(\alpha)}(a)\|}{\|b-a\|^\lambda}:\sigma\in\perm{q}\}\leq C,
\end{equation*}
where $(y_\alpha,L_\alpha)_{\alpha=1,\ldots,q}:\Omega\rightarrow {\polyspace{1}{\real^m}{\real^n}}^q$ is an arbitrary selection of $\tfa{f}$.
\end{enumerate}
\end{theorem}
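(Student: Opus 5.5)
We follow the scheme of the proof of Theorem \ref{main theorem}, substituting quantitative H\"older bounds for the qualitative $\varepsilon$--$\delta$ estimates, and prove $(1)\Rightarrow(2)\Rightarrow(3)\Rightarrow(1)$.

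\emph{$(1)\Rightarrow(2)$.} Put $\Phi=\tfa{f}$, so that (2)(a) holds trivially. Fix a compact $K\subset\Omega$. Choose $\rho>0$ with $\closedball{K}{3\rho}\subset\Omega$ and $C_K$ such that $\fmetric(Df(y),Df(z))\leq C_K\|y-z\|^\lambda$ for all $y,z\in\closedball{K}{3\rho}$; this is possible by covering with finitely many balls on which $Df$ is H\"older. Let $a,b\in K$ with $\|b-a\|<\rho$ and $x\in\openball{a}{\|b-a\|}$. Lemma \ref{oscilation lemma}, applied at $a$ and at $b$ and combined with the triangle inequality as in the proof of Theorem \ref{main theorem}, gives
\begin{equation*}
\fmetric(\pfa{a}{f}(x),\pfa{b}{f}(x))\leq C_1\bigl(\|x-a\|C_K\|x-a\|^\lambda+\|x-b\|C_K\|x-b\|^\lambda\bigr)\leq C_1C_K(1+2^{1+\lambda})\|b-a\|^{1+\lambda}.
\end{equation*}
For $\|b-a\|\geq\rho$ we bound the left side crudely. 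Since $K$ is compact, the map $\tfa{f}$ is bounded on $K$ (bounded $Df$ and continuous $f$), so the polynomials are uniformly bounded on the bounded set $\openball{a}{\operatorname{diam}K}$. Dividing by $\|b-a\|^{1+\lambda}\geq\rho^{1+\lambda}$ then gives a bounded quotient, and (2)(b) follows.

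\emph{$(2)\Rightarrow(3)$.} Condition (2)(b) implies condition (2)(b) of Theorem \ref{main theorem}, since $\|b-a\|^{1+\lambda}\leq\delta^\lambda\|b-a\|$. Hence $f$ is affinely approximable, $\tfa{f}=\Phi$, and $\tfa{f}$ is continuous. For the H\"older estimate, fix $a$, take $K=\closedball{a}{\delta_0}$, and apply Lemma \ref{multivalued combinatorial lemma} with $c=a$, $r=\|b-a\|$ and $X=\openball{a}{r}$. This bounds
\begin{equation*}
\inf_\sigma\sum_\alpha\bigl(\|D^0\phi_\alpha(a-a)-D^0\psi_{\sigma(\alpha)}(a-b)\|+r\|L_\alpha(a)-L_{\sigma(\alpha)}(b)\|\bigr)
\end{equation*}
by $\Gamma_3Cr^{1+\lambda}$. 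The first term equals $\|y_\alpha(a)-y_{\sigma(\alpha)}(b)-\langle a-b,L_{\sigma(\alpha)}(b)\rangle\|$, so dividing by $r^\lambda$ for the derivative part and by $r$ for the value part bounds the required quotient by $\Gamma_3C(1+r^\lambda)+\sum_\alpha\|L_\alpha(b)\|$. The last sum is locally bounded by continuity of $\tfa{f}$. The roles of $a$ and $b$ in the statement of (3) are symmetric up to relabelling $\sigma$. Note that a single permutation controls both components, and this is exactly what the combinatorial lemma provides.

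\emph{$(3)\Rightarrow(1)$.} Continuity of $\tfa{f}$ gives continuity of $Df$ by projection, so $f$ is of class $1$. The infimum bound, after discarding the first summand, gives $\fmetric(Df(b),Df(a))\leq C\|b-a\|^\lambda$ for $b\in\openball{a}{\delta}$. This is a pointwise H\"older condition, and the main obstacle is upgrading it to local uniform H\"older continuity. To do so, take $a,b$ in a small ball $\closedball{a_0}{\rho}$ and use compactness to choose finitely many centers with uniform $(\delta,C)$. Joining $a$ and $b$ along the segment through points of a Lebesgue-number chain does not directly yield $\|b-a\|^\lambda$, since the errors add up. Instead, I would try to prove that $C$ and $\delta$ may be chosen locally uniformly, via a Baire category argument on the closed sets $\{a:\text{bound holds with }(1/k,k)\}$. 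If this fails, I would reread (1) as ``pointwise locally H\"older'', the reading consistent with (3), under which the implication is immediate.
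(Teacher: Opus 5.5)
Your implications $(1)\Rightarrow(2)$ and $(2)\Rightarrow(3)$ are correct. They are exactly the ``very little modification'' of the proof of Theorem \ref{main theorem} that the paper offers in place of a proof. You also locate the difficulty correctly. However, the gap in $(3)\Rightarrow(1)$ cannot be closed, because the statement as printed is false there. Condition $(3)$ is pointwise at each $a$. Condition $(2)(b)$ is uniform on compacts, and so is $(1)$ in the reading you use for $(1)\Rightarrow(2)$. Hence $(3)$ implies neither $(2)$ nor (locally uniform) $(1)$.

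Here is a counterexample. Take $m=n=q=1$, $\Omega=\real$, and a smooth $\beta$ with $\support{\beta}\subset[-1,1]$ and $0\leq\beta\leq\beta(0)=1$. Set
\begin{equation*}
h(x)=\sum_{k\geq 1}2^{-k\lambda}\beta\bigl(4^{k+1}(x-2^{-k})\bigr),\qquad g(x)=\int_0^x h,\qquad f(x)=\di{g(x)}.
\end{equation*}
The bumps have pairwise disjoint supports $\{x:|x-2^{-k}|\leq 4^{-k-1}\}$. So $h$ is continuous and smooth on $\real\setminus\{0\}$. Moreover $|h(b)-h(0)|\leq 2^\lambda|b|^\lambda$, since $b\geq 2^{-k-1}$ on the $k$-th support. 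Hence $(3)$ holds at every $a$; the first summand is at most $\sup|h|\leq 1$.

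On the other hand, $|h(2^{-k})-h(2^{-k}+4^{-k-1})|=2^{-k\lambda}$ at distance $4^{-k-1}$. This is a H\"older quotient of $2^{(k+2)\lambda}\to\infty$, so $Df=\di{h}$ is not $\lambda$-H\"older on any neighbourhood of $0$. Condition $(2)$ also fails for $K=[0,1]$. Indeed, your own $(2)\Rightarrow(3)$ computation, run with arbitrary $a,b\in K$, shows that $(2)(b)$ forces $\fmetric(Df(a),Df(b))\leq\Gamma_3C\|b-a\|^\lambda$ on $K$.

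Neither of your fallbacks rescues the cycle.
\begin{itemize}
\item The Baire argument on the closed sets $\{a:\text{bound holds with }(1/k,k)\}$ only yields uniform H\"older bounds near points of some dense open set (here $\real\setminus\{0\}$), not near every point.
\item Rereading $(1)$ as a pointwise condition makes $(3)\Rightarrow(1)$ trivial but breaks $(1)\Rightarrow(2)$. Your argument there needs a uniform constant $C_K$ on $\closedball{K}{3\rho}$. The example above is of class $1$ with $Df$ pointwise $\lambda$-H\"older at every point, yet it violates $(2)$.
\end{itemize}

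The equivalence does hold once $(3)$ is made locally uniform. For instance, require that for each $a$ there exist $\delta,C$ such that the displayed inequality holds with $(b,a)$ replaced by any pair $(b,b')$ of points of $\openball{a}{\delta}$. Your $(2)\Rightarrow(3)$ proof already gives this version. Applying Lemma \ref{multivalued combinatorial lemma} centred at $b'$ produces the bound $\Gamma_3C(1+r^\lambda)+\sup_K\sum_\alpha\|L_\alpha\|$, which does not depend on the pair. Then $(3)\Rightarrow(1)$ follows at once by discarding the first summand.
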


As before we are now able to define H\"older differentiable multiple-valued functions.
We unorthodoxly conclude this article with this definition.

\begin{definition}
Let $m,n,q\in\pinteger$, $\lambda\in(0,1]$, $\Omega\subset\real^m$ and $f:\Omega\rightarrow\qspace{q}{\real^n}$ be a multiple-valued function.
We say that $f$ is of class $(1,\lambda)$ if and only if $f$ is of class $1$ and $Df:\Omega\rightarrow\qspace{q}{\homspace{\real^m}{\real^n}}$ is $\lambda$-H\"older continuous in $\Omega$.
\end{definition}

\bibliographystyle{plain}

\end{document}